\documentclass[12pt]{amsart}
\usepackage{latexsym}
\usepackage{amssymb, amsmath}
\usepackage[left=3cm,right=3cm,top=3cm, bottom=3cm ]{geometry}

 \newtheorem{thm}{Theorem}[section]
 \newtheorem{lem}[thm]{Lemma}
 \newtheorem{defn}[thm]{Definition}
 \theoremstyle{remark}
 \newtheorem{rem}[thm]{Remark}
 \newtheorem*{ex}{Example}
 \numberwithin{equation}{section}

\usepackage{latexsym}
\usepackage{amssymb}
\usepackage{euscript}
\usepackage[dvips]{graphics}
\usepackage{epsf}

\def\mcc{M\raise.5ex\hbox{c}C}
\def\mccarthy{M\raise.5ex\hbox{c}Carthy}





\def\={\ = \ }






\def\be{\setcounter{equation}{\value{theorem}} \begin{equation}}
\def\ee{\end{equation} \addtocounter{theorem}{1}}
\def\beq{\begin{eqnarray*}}
\def\eeq{\end{eqnarray*}}

\def\bl{\begin{lemma}}
\def\el{\end{lemma}}
\def\bt{\begin{theorem}}
\def\et{\end{theorem}}
\def\bprop{\begin{prop}}
\def\eprop{\end{prop}}
\def\bd{\begin{definition}}
\def\ed{\end{definition}}
\def\br{\begin{remark}}
\def\er{\end{remark}}
\def\bexer{\begin{exercise}}
\def\eexer{\end{exercise}}
\def\bfig{\begin{figure}}
\def\efig{\end{figure}}

\newtheorem{theorem}{Theorem}[section]
\newtheorem{prop}[theorem]{Proposition}
\newtheorem{lemma}[theorem]{Lemma}
\newtheorem{cor}[theorem]{Corollary}

\newtheorem{definition}[theorem]{Definition}

\newtheorem{remark}[theorem]{Remark}

\title{Fundamental Agler Decompositions}
\author{Kelly Bickel \\
Washington University\\
St. Louis, Missouri 63130}
\date{\today}

\email{kbickel@math.wustl.edu}
\thanks{Partially supported by NSF Grant DMS 0966845 and partially supported by an AAUW Dissertation Fellowship.}
\keywords{Agler kernels, Schur functions, bidisk}
\subjclass{Primary 47A57; Secondary 46C07}

\begin{document}

\maketitle

\begin{abstract} We use shift-invariant subspaces of the Hardy space on the bidisk to provide an
elementary proof of the Agler Decomposition Theorem. We observe that these shift-invariant subspaces
are specific cases of Hilbert spaces that can be defined from Agler decompositions and analyze the
properties of such Hilbert spaces. We then restrict attention to rational inner functions and show
that the shift-invariant subspaces provide easy proofs of several known results about decompositions
of rational inner functions. We use our analysis to obtain a result about stable polynomials on the
polydisk. \end{abstract}

\bibliographystyle{plain}

\section{Introduction }

In 1916, Pick considered the following interpolation problem: given $n$ points $\lambda^1, \dots, \lambda^n \in \mathbb{D}$
and $n$ points $\mu^1, \dots, \mu^n \in \mathbb{D}$, when is there a holomorphic $\phi: \mathbb{D} \rightarrow \overline{\mathbb{D}}$ such
that $\phi(\lambda^i) = \mu^i$ for $i = 1, \dots, n?$ He proved that such a $\phi$ exists if and only if
there is a positive kernel $K: \{1, \dots, n\} \times \{1, \dots, n\} \rightarrow \mathbb{C}$ such that
 \begin{equation*} 1-\mu^i
\bar{\mu}^j = (1-\lambda^i \bar{\lambda}^j) K(i,j).\end{equation*} 
(In this paper, we call $K: \Omega \times \Omega
\rightarrow \mathbb{C}$ a positive kernel if, for all finite sets $\{\lambda^1, \dots, \lambda^m \} \subseteq \Omega$, the matrix
with entries $K( \lambda^i,\lambda^j)$ is positive semidefinite. A kernel is called holomorphic if it is holomorphic in
the first variable and conjugate-holomorphic in the second variable.) Pick's condition is necessary because
for any holomorphic $\phi: \mathbb{D} \rightarrow \overline{\mathbb{D}}$, the function $K: \mathbb{D} \times \mathbb{D} \rightarrow \mathbb{C}$
defined by
\begin{equation*}K(z, w) := \frac{ 1
-\phi(z)\overline{ \phi(w)}}{1-z \bar{w}}\end{equation*} 
is a positive holomorphic kernel. In the late
1980's, Agler generalized Pick's result to the bidisk  in \cite{ag1}. He showed that given points
$\lambda^1, \dots, \lambda^n \in \mathbb{D}^2$ and $\mu^1, \dots, \mu^n \in \mathbb{D}$, there is a holomorphic
$\phi:\mathbb{D}^2 \rightarrow \overline{ \mathbb{D}}$ with $\phi(\lambda^i) = \mu^i$ for $i = 1, \dots, n$ if and
only if there are positive kernels $K_1, K_2: \{1, \dots, n\} \times \{1, \dots, n\} \rightarrow \mathbb{C}$ such that
\begin{equation}
\label{eqn1.2} 1-\mu^i \bar{\mu}^j = (1-\lambda_1^i \bar{\lambda}_1^j) K_2(i,j)+ (1-\lambda_2^i
\bar{\lambda}_2^j) K_1(i,j),\end{equation}
where each $\lambda^i=(\lambda^i_1, \lambda^i_2).$ 
Unlike the one-variable case, it is not immediate that
(\ref{eqn1.2}) is a necessary condition. This is the context of the Agler Decomposition Theorem. In
\cite{ag90}, Agler showed that for holomorphic $\phi: \mathbb{D}^2 \rightarrow \overline{\mathbb{D}},$ there are
positive holomorphic kernels $K_1,K_2: \mathbb{D}^2 \times \mathbb{D}^2 \rightarrow \mathbb{C}$ with
 \begin{equation} \label{eqn1.3} 1 -
\phi(z) \overline{ \phi(w)} = (1-z_1 \bar{w}_1) K_2(z,w) + (1-z_2
\bar{w}_2) K_1(z,w), \end{equation} 
for all $z, w \in \mathbb{D}^2.$ This is called an \emph{Agler decomposition of $\phi,$} and the kernels $(K_1,K_2)$
are called \emph{Agler kernels of $\phi$}. To make future calculations easier, we have reversed
the typical ordering of the kernels in (\ref{eqn1.3}). Agler's proof  was a nonconstructive separation
argument relying on the fact that such $\phi$ satisfy And{\^o}'s inequality. It was pointed out in
\cite{colwer94} (and details also appear in \cite{agmc_dv}, using \cite{hol82}) that
(\ref{eqn1.3}) is actually equivalent to And{\^o}'s inequality. Recall that the set of holomorphic
functions $\phi: \mathbb{D}^d \rightarrow \overline{\mathbb{D}}$ is called the \emph{Schur class} on $\mathbb{D}^d.$ It
follows from results about generalizing And{\^o}'s inequality to $\mathbb{D}^d$ that, for $d \ge 3,$ the obvious generalization of
(\ref{eqn1.3}) holds on a strict subset of the Schur functions, called the \emph{Schur-Agler
class} on $\mathbb{D}^d.$

 In the interim, there has been much interest in both analyzing Agler decompositions
on the bidisk (see \cite{bsv05}, \cite{colwer99}, \cite{kn10c}, \cite{kn08bs}) and better
understanding the Schur-Agler class on the polydisk (see \cite{adr08}, \cite{babo02}, \cite{bb11},
\cite{kn10a}, \cite{kn10d}).

In this paper, we address the structure of such Agler decompositions on the bidisk using the basic
theory of reproducing kernel Hilbert spaces. In the appendix, we catalog the results about
kernels and reproducing kernel Hilbert spaces used in the paper. 
For now, recall that if $K$ is a positive kernel, there is a Hilbert
space $\mathcal{H}(K)$ with $K$ as its reproducing kernel.

For Agler kernels $(K_1,K_2)$ of a Schur function $\phi$, we analyze $\mathcal{H}(K_1)$ and $\mathcal{H}(K_2).$ We
also consider the positive holomorphic kernel 
\begin{equation} \label{eqn:ker}  K_{\phi}(z, w) := \frac{ 1 -\phi(z)\overline{
\phi(w)}}{(1-z_1 \bar{w}_1)(1-z_2 \bar{w}_2)},\end{equation} 
for $z, w \in \mathbb{D}^2.$ 
The Hilbert space with the reproducing kernel $K_{\phi}$ is denoted $\mathcal{H}_{\phi}.$
For $\phi$ inner, it is well-known that  $\mathcal{H}_{\phi}$ is equal isometrically to $H^2(\mathbb{D}^2)
 \ominus \phi H^2(\mathbb{D}^2),$ where $H^2(\mathbb{D}^2)$ denotes the Hardy space on the bidisk.

In Section 2, we consider inner $\phi$ and introduce fundamental shift-invariant subspaces of $\mathcal{H}_{\phi}$
and hence, of $H^2(\mathbb{D}^2).$ These subspaces are special cases of spaces that appear
naturally in the theory of scattering systems and scattering-minimal unitary colligations; such
subspaces are discussed extensively by Ball-Sadosky-Vinnikov in \cite{bsv05}. Specifically, for $r=1,2,$ we
let $Z_r$ denote the coordinate function $Z_r(z_1,z_2)=z_r.$ We then let \emph{$S^{max}_1 $} 
denote the largest subspace in $\mathcal{H}_{\phi}$ invariant under
multiplication by $Z_1$ and let \emph{$S^{min}_2$} $= \mathcal{H}_{\phi} \ominus S^{max}_1.$
We define \emph{$S^{max}_2$} and \emph{$S^{min}_1$} analogously.

We show that these subspaces yield an elementary proof of the Agler Decomposition Theorem, which is
constructive for inner functions. The result is implied by analyses in \cite{bsv05}, and related
arguments appear in a recent paper by Grinshpan-Kaliuzhnyi-Verbovetskyi-Vinnikov-Woerdeman in
\cite{gkvw08}, who prove a generalization of the Agler Decomposition Theorem. Their arguments use
the theory of scattering systems and shift-invariant subspaces of scattering subspaces. We present
this proof separately because it removes the need for scattering systems and provides concrete
decompositions that are used in later sections. We also develop a uniqueness criterion for inner
functions and show that non-extreme functions never have unique Agler decompositions.

In Section 3, we observe that the spaces $S^{max}_r$ and $S^{min}_r$ are special cases of more
general objects. Specifically, if $\phi$ is a Schur function with Agler kernels $(K_1,K_2)$, we
define the following Hilbert spaces:
\begin{equation*} S^{K}_r := \mathcal{H} \left ( \frac{K_r(z, w) }{1-z_r\bar{w}_r} \right), \end{equation*}
for $r=1,2.$ It is not hard to show that, for any inner $\phi$, the spaces $S^{max}_r$ and $S^{min}_r$ satisfy backward-shift invariant properties, and $S^{min}_r$ is, in some sense, a minimal $S^{K}_r$ space. In Propositions \ref{prop3.1} and
\ref{prop3.2}, we show that these properties extend to general $S^K_r$ spaces. In particular, we prove that for general $\phi$, the associated $S^K_r$ spaces also possess backward-shift invariant properties and
contain minimal sets. In Theorem \ref{thm3.4}, we characterize when a Schur function $\phi$ possesses
Agler kernels arising from an orthogonal decomposition of $\mathcal{H}_{\phi}.$

In Section 4, we use $S^{max}_r$ to examine Agler decompositions of rational inner functions. Let
$\phi$ be rational inner, and let the degree of $\phi$ in the variable $z_r$ be $k_r$ for
$r=1,2.$ We denote this by $\deg \phi = (k_1,k_2)$ and $\deg_{r} \phi =k_r.$ It is known
that for all Agler kernels $(K_1,K_2)$ of $\phi$, each $\mathcal{H}(K_r)$ is finite dimensional.
Specifically, \begin{align*} \text{dim}( \mathcal{H}(K_1)) \le k_2(k_1+1) \text{ and } \text{dim} ( \mathcal{H}(K_2) ) \le
k_1(k_2+1).\end{align*} The finiteness condition was proved by Cole and Wermer in \cite{colwer99}, and the
specific dimension bounds were found by Knese in \cite{kn10a}. We provide an alternate short proof using
$S^{max}_1$ and $S^{max}_2.$

We then consider rational inner functions $\phi$ continuous on $\overline{\mathbb{D}^2}.$ In Proposition
\ref{prop4.1}, we consider and slightly extend analyses from \cite{bsv05} about the Hilbert spaces associated to $\phi$. We use those results to show that such $\phi$ have unique Agler decompositions if and only if they are functions of one variable. This result was originally proven by Knese in \cite{kn08ub} using alternate methods. In Proposition \ref{thm4.2}, we show that this property does not extend to all rational inner functions and construct rational inner functions of arbitrarily high degree with
unique Agler decompositions.

In the concluding section, we provide an application of the analysis of $\mathcal{H}_{\phi}$ in Proposition
\ref{prop4.1}. Specifically, recall that a polynomial in $d$ variables is called \emph{stable} if it has no zeros on
$\overline{\mathbb{D}^d}.$ We first generalize Proposition \ref{prop4.1} to the polydisk in Proposition
\ref{prop5.1}. We then use it to generalize a result of Knese in \cite{kn08bs}
characterizing stable polynomials in two complex variables to polynomials in $d$ complex variables.

\section{The Agler Decomposition Theorem} 
As we deal exclusively with the bidisk, we denote $H^{\infty}(\mathbb{D}^2)$, $H^2(\mathbb{D}^2),$  $L^{\infty}(\mathbb{T}^2)$, and $L^2(\mathbb{T}^2)$
by $H^{\infty}$, $H^2$, $L^{\infty},$ and $L^2$ and denote the closed unit ball of $H^{\infty}$ by $H^{\infty}_1,$  which is
equivalent to the Schur class on $\mathbb{D}^2$. 

Given a vector subspace $U$ of a Hilbert space $\mathcal{H}$,
we let $\overline{U}$ denote the closure of $U$ in $\mathcal{H}.$ Then, $\overline{U}$ is a Hilbert space that inherits the inner product of $\mathcal{H}.$ We 
also let $P_{V}$ denote the projection operator onto a closed subspace $V$ of $\mathcal{H}$ and let $M_{\psi}$ be the operator
of multiplication by a function $\psi.$ 

For $r=1,2,$ let $z_r$ denote the independent variable and $Z_r$ denote the coordinate function defined by $Z_r(z_1,z_2)=z_r.$ Define the following closed subspaces of $L^2$: 
\begin{align*}
L^2_{*-} &:= \big \{ f \in L^2: \hat{f}(n_1,n_2) =0 \text{ for } {n_2} \ge 0 \big \}, \\ 
L^2_{+-} &:= \big \{ f \in L^2 : \hat{f}(n_1,n_2) =0 \text{ for } n_1
< 0 \text{ or } n_2 \ge 0 \big \}, \\ 
L^2_{--} &:= \big \{ f \in L^2:  \hat{f}(n_1,n_2) =0 \text{ for } n_1 \ge 0 \text{ or } n_2 \ge 0
\big \}. \end{align*} Define $L^2_{-*}$ and $L^2_{-+}$ analogously. We also view $H^2$ as a subspace 
of $L^2$ in the usual way. In particular, each function  $f \in H^2$ is associated with the $L^2$ function  whose Fourier coefficients equal the Taylor coefficients of $f$ (for details, see \cite{rud69}). 
This associated $L^2$ function is also denoted as $f$.  
Then $H^2$ can be viewed as the space of functions:
\begin{equation} \label{eqn:H2} 
 \big \{ f \in L^2: \hat{f}(n_1,n_2) =0 \text{ for } n_1 < 0 \text{ or } n_2 <0 \big \}. \end{equation}
For $n_1, n_2 \in \mathbb{N}$, we let $\hat{f}(n_1,n_2)$ denote both the Fourier coefficient of the $L^2$ function and the Taylor coefficient of the associated holomorphic $H^2$ function.
It is worth noting that this identification is equivalent to associating an $H^2$ function on $\mathbb{D}^2$ with its a.e.-defined boundary value function on $\mathbb{T}^2.$

\begin{defn} Let $\phi \in H^{\infty}_1$  be inner; specifically, assume the radial boundary values of $\phi$ exist a.e. and  satisfy
\begin{equation*} \lim_{r \nearrow 1} \big | \phi \big (r e^{i \theta_1}, r e^{i \theta_2} \big) \big | = \big | \phi \big ( e^{i \theta_1},  e^{i \theta_2} \big) \big | = 1 \text{ a.e.} \end{equation*} 
 Recall that $\mathcal{H}_{\phi}$ is the Hilbert space with reproducing kernel given by (\ref{eqn1.3}). Then, let $S^{max}_1 $ denote the largest
subspace in $\mathcal{H}_{\phi}$ invariant under $ M_{Z_1},$ i.e. invariant under multiplication by the coordinate
function $Z_1.$ A simple application of Zorn's Lemma shows
such a subspace must exist. It is immediate that $S^{max}_1$ is a closed subspace of $\mathcal{H}_{\phi},$ and hence, of $H^2.$ Let
$S^{min}_2 = \mathcal{H}_{\phi} \ominus S^{max}_1,$ and define $S^{max}_2$
and $S^{min}_1$ analogously.
 \end{defn}

\begin{rem} \label{rem2.1.b} For $\phi$ inner, $M_{\phi}$ is an isometry on $L^2$ and $H^2.$ It is then easy to
verify that 
\begin{align}
 \mathcal{H}_{\phi} &= H^2 \ominus \phi H^2 \nonumber \\ &= H^2 \cap
\phi [L^2 \ominus H^2 ]\nonumber \\ &= \big \{ \phi f \in H^2: f \in L^2_{*-} \oplus L^2_{-+}\big \}.\label{eqn2.1}
\end{align} \end{rem}

Other closed subspaces of $H^2,$ such as $S^{max}_r$ and $S^{min}_r,$ can also be identified with closed subspaces of $L^2$. 
Then, establishing $M_{Z_r}$-invariance of the subspace on $\mathbb{D}^2$ is equivalent to establishing $M_{Z_r}$-invariance of the subspace on $\mathbb{T}^2.$
In particular, each $S^{max}_r$ can be viewed as the maximal subspace of $(\ref{eqn:H2})$  invariant under $M_{Z_r}.$

The remark and lemma below detail special cases of Theorem 5.5 and Proposition 5.11 of Ball-Sadosky-Vinnikov
in \cite{bsv05}. We include simple proofs. Some of the arguments originate in \cite{bsv05}, while
others are our own.

\begin{rem} \label{rem2.1} Let $\phi$ be inner and assume we have an orthogonal decomposition 
\begin{equation*} \mathcal{H}_{\phi} =
S_1 \oplus S_2, 
\end{equation*}
with  $Z_r S_r \subseteq S_r$  for  $r=1,2.$ It is
almost immediate that $S^{min}_r \subseteq S_r.$ Specifically, for $f \in S^{min}_1,$ we can write
$f = f_1 + f_2 \text{ for } f_r \in S_r.$ By the maximality of $S^{max}_2$, we have $f_2 \in S^{max}_2$, which implies $f \perp f_2.$ By assumption, $f_1 \perp f_2$, so that 
\begin{equation*}\|f_2\|^2 = \langle f_2, f_1 +f_2 \rangle = \langle f_2, f
\rangle = 0.\end{equation*}
 Thus, $f=f_1 \in S_1.$ Similarly, $S^{min}_2 \subseteq S_2.$ \end{rem}

\begin{lem} \label{lem2.1} Let $\phi \in H^{\infty}_1$ be inner. Then \begin{align*} S^{max}_1 &= H^2 \cap \phi
L^2_{*-} \qquad S^{min}_1 = \overline{ P_{H^2} \phi L^2_{+-}} \\ S^{max}_2 &= H^2 \cap \phi
L^2_{-*} \qquad S^{min}_2 = \overline{ P_{H^2} \phi L^2_{-+}} , \end{align*} and $S^{max}_r$ and
$S^{min}_r$ are invariant under $M_{Z_r}$ for $r=1,2.$ \end{lem}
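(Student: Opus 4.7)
The plan is to reduce the four identities to a symmetric pair. By interchanging the roles of $z_1$ and $z_2$, it suffices to prove $S^{max}_1 = H^2 \cap \phi L^2_{*-}$, $S^{min}_1 = \overline{P_{H^2} \phi L^2_{+-}}$, and the corresponding $M_{Z_1}$-invariance. I will rely on Remark \ref{rem2.1.b}, which identifies $\mathcal{H}_\phi$ with $\{\phi g \in H^2 : g \in L^2 \ominus H^2\}$, together with the two Fourier-support decompositions $L^2 \ominus H^2 = L^2_{*-} \oplus L^2_{-+} = L^2_{+-} \oplus L^2_{-*}$, each obtained by partitioning the index set $\{(n_1,n_2) : n_1 < 0 \text{ or } n_2 < 0\}$.

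For $S^{max}_1$, the set $H^2 \cap \phi L^2_{*-}$ sits inside $\mathcal{H}_\phi$, is closed, and is $M_{Z_1}$-invariant since $Z_1 L^2_{*-} \subseteq L^2_{*-}$, so by maximality it is contained in $S^{max}_1$. For the reverse, write $f \in S^{max}_1$ as $f = \phi g$ with $g \in L^2 \ominus H^2$; iterated invariance gives $Z_1^n g \in L^2 \ominus H^2$ for every $n \geq 0$. Translating to Fourier coefficients, $\hat g(m_1 - n, m_2) = 0$ whenever $m_1, m_2 \geq 0$, and letting $n \to \infty$ forces $g \in L^2_{*-}$.

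For $S^{min}_1$, if $h \in L^2_{+-}$ then $P_{H^2}(\phi h) \in \mathcal{H}_\phi$ because $\langle \phi h, \phi p\rangle_{L^2} = \langle h, p\rangle = 0$ for every $p \in H^2$, and $P_{H^2}(\phi h) \perp S^{max}_2$ because $L^2_{+-}$ and $L^2_{-*}$ are supported on disjoint index sets. Hence $\overline{P_{H^2} \phi L^2_{+-}} \subseteq \mathcal{H}_\phi \ominus S^{max}_2 = S^{min}_1$. For the reverse, suppose $f \in \mathcal{H}_\phi$ is orthogonal to $\overline{P_{H^2} \phi L^2_{+-}}$; then $\bar\phi f \in L^2 \ominus H^2$ is orthogonal to $L^2_{+-}$, so by the second decomposition $\bar\phi f \in L^2_{-*}$, i.e.\ $f \in H^2 \cap \phi L^2_{-*} = S^{max}_2$. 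Taking orthogonal complements inside $\mathcal{H}_\phi$ yields the missing inclusion.

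The $M_{Z_1}$-invariance of $S^{max}_1$ is already built into its identification, but the $M_{Z_1}$-invariance of $S^{min}_1$ is more delicate because $M_{Z_1}$ and $P_{H^2}$ do not commute on all of $L^2$. The crucial observation is that for $h \in L^2_{+-}$, the Fourier support of $\phi h$ is contained in $\{n_1 \geq 0\}$ (by convolution of the supports of $\hat\phi$ and $\hat h$), so the $L^2 \ominus H^2$-part of $\phi h$ lies entirely in $L^2_{+-}$. Since $L^2_{+-}$ is itself $M_{Z_1}$-invariant, this forces the commutation $Z_1 P_{H^2}(\phi h) = P_{H^2}(\phi Z_1 h)$, and because $Z_1 h \in L^2_{+-}$ the right-hand side belongs to $P_{H^2} \phi L^2_{+-}$; density then extends invariance to $S^{min}_1$. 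I expect this Fourier-support bookkeeping to be the main technical hurdle of the proof.
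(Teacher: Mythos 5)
Your proof is correct, and its key step is genuinely different from the paper's. The identification of $S^{max}_1$ is essentially the paper's argument (one inclusion by maximality, the other by iterating $M_{Z_1}$-invariance on the Fourier side). For $S^{min}_1$ you argue by a clean two-sided orthogonality computation using the splitting $L^2\ominus H^2=L^2_{+-}\oplus L^2_{-*}$, whereas the paper instead computes $\mathcal{H}_{\phi}\ominus S^{max}_1$ by projecting the explicit $L^2$-orthocomplement of $S^{max}_1$; these are close in spirit. The real divergence is the $M_{Z_r}$-invariance of $S^{min}_r$: you observe that for $h\in L^2_{+-}$ the Fourier support of $\phi h$ lies in $\{n_1\ge 0\}$, so the non-analytic part of $\phi h$ stays in the $M_{Z_1}$-invariant space $L^2_{+-}$ and hence $Z_1P_{H^2}(\phi h)=P_{H^2}(\phi Z_1h)$; invariance then passes to the closure since $M_{Z_1}$ is isometric on $H^2$ and the $\mathcal{H}_\phi$ norm agrees with the $H^2$ norm for inner $\phi$. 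The paper instead computes $P_{H^2}(\phi f)$ explicitly for $f$ in a dense subset of $L^2_{-+}$ with finitely many nonzero $n_1$-modes, obtaining the concrete description of $S^{min}_2$ as the closure of $\{\sum_m (X_1^m\phi)f_m(z_2)\}$, from which invariance is evident. Your commutation argument is shorter and arguably more transparent for this lemma alone; the paper's computation buys the explicit backward-shift description of $S^{min}_r$ (equation (\ref{eqn3.2b}) and the sets $V_1,V_2$) that is reused repeatedly in Section 3, so it cannot simply be discarded from the paper's development.
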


\begin{proof} We prove the results for $S^{max}_1$ and $S^{min}_2.$ By definition, 
\begin{equation*} S^{max}_1 = \big \{ f \in
\mathcal{H}_{\phi} : Z_1^kf \in \mathcal{H}_{\phi}, \ \forall \ k \in \mathbb{N} \big \}.\end{equation*}
 Let $S_1$ denote the set $H^2
\cap \phi L^2_{*-}.$ By the characterization of $\mathcal{H}_{\phi}$ in (\ref{eqn2.1}), $S_1 \subseteq
\mathcal{H}_{\phi}$. Since $Z_1 S_1 \subseteq S_1,$ we have $S_1 \subseteq S^{max}_1.$ 

Assume $g \in
S^{max}_1.$ Then, $g \in \mathcal{H}_{\phi},$ and (\ref{eqn2.1}) implies that $g=\phi f,$ for $f \in L^2_{*-} \oplus L^2_{-+}.$ Proceeding towards a contradiction, assume $g \not \in S_1$.
Then, there is some $(n_1,n_2) \in \mathbb{Z}^2$  such that $\hat{f}(n_1,n_2) \ne 0$ and $n_2 \ge 0$. It is immediate that
\begin{equation*} Z_1^{|n_1|}g \not \in \mathcal{H}_{\phi}, \end{equation*}
which gives the contradiction. Thus, $S^{max}_1 = H^2
\cap \phi L^2_{*-},$ and so, $S^{max}_1$ is precisely the space of $L^2$ functions orthogonal to the
closure of 
\begin{equation*} (L^2 \ominus H^2 )+ \phi (H^2 \oplus L^2_{-+})
\end{equation*}
 in $L^2.$ Since $S^{max}_1$ is closed,
we can calculate \begin{align*} S^{min}_2& := \mathcal{H}_{\phi} \ominus S^{max}_1 \\ &= P_{\mathcal{H}_{\phi}} \big [
(S^{max}_1 )^{\perp} \big] \\ &= \overline{ P_{\mathcal{H}_{\phi}} \big [ (L^2 \ominus H^2)+ \phi (H^2 \oplus
L^2_{-+}) \big] } \\ &= \overline{ P_{\mathcal{H}_{\phi}} \phi L^2_{-+} } \\ &= \overline{ P_{ H^2} \phi
L^2_{-+} }, \end{align*} where the last equality follows because $\phi L^2_{-+} \perp \phi H^2.$ Now, define the set  
\begin{equation*} L := \{ f \in L^2_{-+}: \hat{f}(n_1,n_2)=0 \text{ for all but finitely many $n_1$}\}. \end{equation*}  
Then, $L$ is dense in $ L^2_{-+}.$ Define $V = P_{H^2} \phi L,$ and let $f \in L$. Then, there is some $M \in \mathbb{N}$ such that 
we can write $f(z)=\sum_{m=1}^M f_m(z_2) z_1^{-m}$ a.e. on $\mathbb{T}^2,$ where each $f_m \in H^2(\mathbb{T})$ and satisfies
\begin{equation*} f_m(z_2) \sim \sum_{n=0}^{\infty} \hat{f}(m,n) z_2^n. \end{equation*}
Then, $P_{H^2} ( \phi f ) = \sum_{m=1}^M P_{H^2} ( \phi f_m Z_1^{-m}).$ By explicit calculation of Fourier coefficients, one can obtain
\begin{equation*} P_{H^2} \big(\phi f_m Z_1^{-m} \big)(z) \sim \sum_{j,k \ge 0} \widehat{\phi f_m}(j+m,k)z_1^j z_2^k. \end{equation*}
 Viewing  $P_{H^2} \big(\phi f_m Z_1^{-m} \big)$ as a holomorphic function on $\mathbb{D}^2$ and analyzing Taylor coefficients shows:
\begin{equation*} P_{H^2} \big(\phi f_m Z_1^{-m} \big)(z) = (X_{1}^m \phi f_m)(z) = (X^m_{1}\phi)(z) f_m(z_2), \end{equation*}
for $z \in \mathbb{D}^2,$ where $X_{1}$ denotes the backward shift operator on $H^2$  in the
$z_1$ coordinate defined by:
\begin{equation*} (X_{1}g)(z) =  \frac{ g(z) - g(0,z_2)}{z_1}, \end{equation*}
 for $g \in H^2$, and $X^{m}_{1}\phi $ denotes the function obtained by applying that backward shift operator $m$ times to $\phi$. By examining $P_{H^2} f$, it is immediate that:
\begin{equation} \label{eqn:vset}  V \subseteq \Big \{ \sum_{m=1}^{M}  \big (X^{m}_{1}\phi \big )(z) f_{m}(z_2): M \in \mathbb{N}, \ f_{m} \in H^2(\mathbb{D}) \Big\}. \end{equation}
By selecting specific $f \in L$ and doing analogous calculations,  containment in the other direction is easy to show. Thus, as a space of holomorphic functions, 
$V$ equals the set in $(\ref{eqn:vset})$. This characterization implies $V$ is invariant under $M_{Z_2}.$ As  
\begin{equation*} S^{min}_2 = \overline{P_{H^2} \phi L} = \overline{V}, \end{equation*}
$S^{min}_2$ must be invariant under $M_{Z_2}$ as well. The results for $S^{max}_2$ and
$S^{min}_1$ follow by symmetry. \end{proof}

We now provide an elementary proof of the Agler Decomposition Theorem. This result was first proven
by Agler in \cite[Theorem 2.6]{ag90}.

\begin{thm} \label{thm2.1} For $\phi \in H^{\infty}_1$, there are positive holomorphic kernels $K_1,K_2: \mathbb{D}^2
\times \mathbb{D}^2 \rightarrow \mathbb{C}$ satisfying
 \begin{align*} 1-\phi(z)\overline{\phi(w)} = (1 - z_1
\bar{w}_1)K_2(z,w) +(1-z_2 \bar{w}_2)K_1(z, w), \end{align*}
for all $z,
w \in \mathbb{D}^2.$  \end{thm}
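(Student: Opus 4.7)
The plan is to prove the theorem first for inner $\phi$ by an explicit construction from the subspaces of Lemma \ref{lem2.1}, and then extend to the general Schur case by approximation.

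For inner $\phi$, I start with the orthogonal decomposition $\mathcal{H}_\phi = S^{max}_1 \oplus S^{min}_2$, which is tautological since $S^{min}_2 := \mathcal{H}_\phi \ominus S^{max}_1$. Writing $k_{S^{max}_1}$ and $k_{S^{min}_2}$ for the reproducing kernels of these closed subspaces (with the inner product inherited from $\mathcal{H}_\phi$, equivalently from $H^2$), the orthogonal RKHS decomposition gives
$$K_\phi(z,w) = k_{S^{max}_1}(z,w) + k_{S^{min}_2}(z,w).$$
I then propose
$$K_1(z,w) := (1-z_1\bar w_1)\,k_{S^{max}_1}(z,w), \qquad K_2(z,w) := (1-z_2\bar w_2)\,k_{S^{min}_2}(z,w).$$
To see that $K_1$ is positive, use that $S^{max}_1 \subseteq H^2$ is $M_{Z_1}$-invariant (Lemma \ref{lem2.1}) and that $M_{Z_1}$ is isometric on $H^2$: a short reproducing-kernel computation, based on the identity $\langle Z_1 f, Z_1 g\rangle_{H^2} = \langle f,g\rangle_{H^2}$, shows that $Z_1 S^{max}_1$ has reproducing kernel $z_1\bar w_1\,k_{S^{max}_1}(z,w)$. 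Hence $(1-z_1\bar w_1)\,k_{S^{max}_1}$ is the reproducing kernel of the wandering subspace $S^{max}_1 \ominus Z_1 S^{max}_1$, and is therefore positive. The same argument with $M_{Z_2}$-invariance of $S^{min}_2$ gives positivity of $K_2$. Multiplying the kernel identity above by $(1-z_1\bar w_1)(1-z_2\bar w_2)$ and using the definition of $K_\phi$ produces the Agler identity $1-\phi(z)\overline{\phi(w)} = (1-z_1\bar w_1)K_2(z,w) + (1-z_2\bar w_2)K_1(z,w)$.

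For general $\phi \in H^\infty_1$, the plan is to approximate $\phi$ locally uniformly on $\mathbb{D}^2$ by a sequence of inner functions $\phi_n$, apply the inner case to each $\phi_n$ to obtain positive holomorphic kernels $K_1^{(n)}, K_2^{(n)}$, and extract a convergent subsequence. The Agler identity for $\phi_n$ together with $|\phi_n|\le 1$ gives $(1-|z_1|^2)K_2^{(n)}(z,z) + (1-|z_2|^2)K_1^{(n)}(z,z)\le 1$, so on any compact subset of $\mathbb{D}^2$ each $K_r^{(n)}(z,z)$ is uniformly bounded; Cauchy--Schwarz then bounds $|K_r^{(n)}(z,w)|$ on compact subsets of $\mathbb{D}^2 \times \mathbb{D}^2$. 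A Montel-type argument yields a subsequence converging locally uniformly to holomorphic kernels $K_1, K_2$, which remain positive as pointwise limits of positive kernels, and passing to the limit in the Agler identity for $\phi_n$ gives the identity for $\phi$.

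The main obstacle is justifying the approximation step: producing inner $\phi_n$ converging locally uniformly on $\mathbb{D}^2$ to an arbitrary Schur $\phi$. This is where the constructive character of the argument is lost, and one must appeal to a standard density result for inner functions in the Schur class of the bidisk (for example via a unitary-dilation/realization of $\phi$, or via Herglotz-type approximations of $\phi_r(z):=\phi(rz)$ by inner functions on a slightly larger domain). Once that step is granted, every remaining ingredient---the reproducing-kernel identity yielding positivity and the algebraic manipulation producing the Agler decomposition---is elementary and rests only on the subspace framework of Lemma \ref{lem2.1}.
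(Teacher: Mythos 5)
Your proposal is correct and follows essentially the same route as the paper: decompose $\mathcal{H}_{\phi}=S^{max}_1\oplus S^{min}_2$, multiply the subspace kernels by $(1-z_r\bar w_r)$ (your wandering-subspace argument for positivity is equivalent to the paper's appeal to the multiplier theorem, since $M_{Z_r}$ is isometric on these subspaces in the inherited $H^2$ norm), and pass to general Schur functions via Rudin's density of inner functions in $H^\infty_1(\mathbb{D}^2)$ plus a normal-families argument. The density result you flag as the remaining obstacle is exactly what the paper cites (Theorem 5.5.1 of Rudin's \emph{Function Theory in Polydiscs}), so no gap remains.
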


\begin{proof} Let $\phi$ be an inner function, and let $S_1$ and $S_2$ denote the subspaces $S^{max}_1$ and
$S^{min}_2$. Since $S_1$ and $S_2$ are closed subspaces of $\mathcal{H}_{\phi}$, it
follows from Theorem \ref{thmA.4} that they are reproducing kernel Hilbert spaces that inherit the $\mathcal{H}_{\phi}$ inner product and have reproducing kernels given
by 
\begin{equation*}L_{S_r}(z, w) = P_{S_r} \left [ \frac{1 -\phi( \cdot) \overline{\phi(w)}}{(1- \cdot \ \bar{w}_1)(1- \cdot \ \bar{w}_2)}\right ] (z),\end{equation*} 
for $r=1,2.$ By Lemma
\ref{lem2.1}, each $S_r$ is invariant under $M_{Z_r}$. As each $S_r$ inherits the $\mathcal{H}_{\phi}$ norm, and $\mathcal{H}_{\phi}$ inherits the $H^2$ norm, we have $\|M_{Z_r} \|_{S_r} = 1.$ Theorem \ref{thmA.5} implies
 \begin{align*} K_r(z, w)
&: = (1-z_r \bar{w}_r)L_{S_r}(z, w)\end{align*} 
is a positive kernel for $r=1,2.$ As the $S_r$ are
Hilbert spaces of holomorphic functions, it follows that the $K_r$ are holomorphic kernels. Since
$\mathcal{H}_{\phi} =S_1 \oplus S_2,$ we have
 \begin{align} \frac{1 -\phi(z)
\overline{\phi(w)}}{(1-z_1\bar{w}_1)(1-z_2\bar{w}_2)} 
&= L_{S_1}(z, w) +L_{S_2}(z, w) \nonumber \\ 
&= \frac{K_1(z, w) }{1-z_1\bar{w}_1} + \frac{K_2(z,w) }{1-z_2\bar{w}_2}. \label{eqnker}
\end{align} 
Rearranging terms shows that $(K_1,K_2)$ are Agler kernels of
$\phi$. Basic manipulations of (\ref{eqnker}) and an application of Corollary \ref{corA.1} show that each $\mathcal{H}(K_r)$ is 
contained contractively in $\mathcal{H}_{\phi}$ and hence, in $H^2.$

For $\phi$ not inner, Theorem 5.5.1 in \cite{rud69} gives a sequence of inner
functions $\{ \phi^n \}$ converging locally, uniformly to $\phi.$ Let $\{ K_1^n \}$ and $\{ K_2^n
\}$ denote sequences of Agler kernels of the $\{\phi^n\}$. For $r=1, 2$, the Cauchy-Schwarz inequality
and the contractive containment of the $\mathcal{H}(K^n_r)$ inside $H^2$ can
be used to show that
\begin{equation*} | K_r^n(z, w)|^2 \le \frac{1}{(1-|z_1|^2)(1-|z_2|^2)}
\frac{1}{(1-|w_1|^2)(1-|w_2|^2)},\end{equation*} 
for all $ z, w \in \mathbb{D}^2$ and $n \in \mathbb{N}.$ Since the sequences $\{K_r^n \}$ are locally, uniformly
bounded, they form a normal family. Then, there is a subsequence $\{ \phi^{n_k} \}$ such that the
associated kernel subsequences $\{ K_1^{n_k} \}$ and $\{ K_2^{n_k} \}$ converge locally uniformly to
positive holomorphic kernels $K_1$ and $K_2$ satisfying 
\begin{align*} 
1-\phi(z)\overline{\phi(w)} = (1 - z_1 \bar{w}_1)K_2(z,w) +(1-z_2
\bar{w}_2)K_1(z, w),  \end{align*} 
for all $z, w \in \mathbb{D}^2.$
\end{proof}

And\^o's inequality follows as a corollary of Theorem \ref{thm2.1}. For the finite case, the arguments appear in
\cite[Theorem 1.2]{colwer94}. The general case and a refined inequality are discussed in \cite{agmc_dv}, using \cite{hol82}.

\begin{cor}\emph{ (And\^o's Inequality)} Let $p$ be a polynomial in $H^{\infty}_1,$ and let
$(T_1,T_2)$ be a pair of commuting contractions on a Hilbert space $\mathcal{H}$. Then, $p(T_1,T_2)$ is 
a contraction on $\mathcal{H}$, i.e. 
\begin{equation*} \| p(T_1,T_2) \|_{\mathcal{H}} \le 1.\end{equation*} \end{cor}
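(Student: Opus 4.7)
The plan is to substitute the commuting contractions $T_1, T_2$ into the Agler decomposition of $p$ provided by Theorem~\ref{thm2.1} and then exploit the commutativity to reduce the whole inequality to the manifest positivity $I - T_rT_r^* \ge 0$ for a contraction $T_r$.

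First I would reduce to the case of strict contractions. Replacing each $T_r$ by $sT_r$ with $s \in (0,1)$ gives $\|sT_r\| < 1$; if the inequality is proved in that setting, letting $s \nearrow 1$ recovers the general statement by norm continuity (valid because $p$ is a polynomial, so $p(sT_1,sT_2) \to p(T_1,T_2)$ in norm). Strict contractivity also guarantees that $g(T_1,T_2)$ is a bounded operator for every $g$ holomorphic on $\mathbb{D}^2$, defined by its convergent power series.

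Next, apply Theorem~\ref{thm2.1} to the polynomial $p$ to obtain positive holomorphic kernels $K_1, K_2$ with
\begin{equation*} 1 - p(z)\overline{p(w)} = (1-z_1\bar{w}_1)K_2(z,w) + (1-z_2\bar{w}_2)K_1(z,w). \end{equation*}
An orthonormal basis of $\mathcal{H}(K_r)$ yields a Kolmogorov decomposition $K_r(z,w) = \sum_n g^{(r)}_n(z)\overline{g^{(r)}_n(w)}$ with each $g^{(r)}_n$ holomorphic on $\mathbb{D}^2$. I would then perform the hereditary substitution $z \mapsto (T_1,T_2)$, $\bar{w} \mapsto (T_1^*, T_2^*)$, placing all unbarred factors to the left of all barred ones in each monomial. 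Writing $T=(T_1,T_2)$, the left side becomes $I - p(T)p(T)^*$, while the $r=2$ summand on the right becomes
\begin{equation*} \sum_n \bigl[\, g^{(2)}_n(T)g^{(2)}_n(T)^* - T_1 g^{(2)}_n(T)g^{(2)}_n(T)^* T_1^*\,\bigr]. \end{equation*}
Since $T_1$ commutes with $T_2$ (and therefore with $g^{(2)}_n(T)$), each bracket equals $g^{(2)}_n(T)(I - T_1T_1^*)g^{(2)}_n(T)^*$, which is positive. The $r=1$ summand is positive by symmetry. Hence $I - p(T)p(T)^* \ge 0$, which gives $\|p(T_1,T_2)\| \le 1$.

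The main obstacle is justifying convergence of the operator series $\sum_n g^{(r)}_n(T)g^{(r)}_n(T)^*$. Since $T_1, T_2$ are strict contractions, the scalar identity $\sum_n |g^{(r)}_n(z)|^2 = K_r(z,z)$ is uniformly bounded on a polydisk $\overline{s\mathbb{D}^2}$ containing the relevant spectral data, and a polynomial calculus estimate should upgrade this to operator-norm convergence. Alternatively, one can truncate each sum at some $N$, derive positivity modulo a controllable remainder, and pass to the limit, sidestepping any delicate operator convergence argument entirely.
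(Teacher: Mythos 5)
The paper itself offers no proof of this corollary---it delegates entirely to \cite{colwer94} and \cite{agmc_dv}---and your argument is precisely the hereditary-functional-calculus proof that the first of those references carries out, so you are on the intended route. The algebraic core is right: after rescaling to strict contractions, decomposing each $K_r$ into a sum of squares $\sum_n g^{(r)}_n(z)\overline{g^{(r)}_n(w)}$ via an orthonormal basis (Theorem \ref{thmA.1}), and substituting hereditarily with unbarred variables on the left, commutativity of $T_1$ with $g^{(2)}_n(T)$ turns each term of the $r=2$ sum into $g^{(2)}_n(T)(I-T_1T_1^*)g^{(2)}_n(T)^*\ge 0$, and symmetrically for $r=1$, giving $I-p(T)p(T)^*\ge 0$.

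The one step that needs more than you have written is the convergence and rearrangement of the operator series, and your primary suggestion does not close it as stated: knowing that $\sum_n |g^{(r)}_n(z)|^2=K_r(z,z)$ is bounded on $\overline{s\mathbb{D}^2}$ controls the partial sums pointwise but does not yield norm convergence of $\sum_n g^{(r)}_n(T)g^{(r)}_n(T)^*$, since the $g^{(r)}_n$ are only an orthonormal basis of $\mathcal{H}(K_r)$, which is merely contractively contained in $H^2$, so $\sum_n\|g^{(r)}_n\|_{H^2}^2$ may diverge and no single ``polynomial calculus estimate'' dominates the tail. Your fallback---truncation---is the correct repair and should be executed. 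Write $K_r=K_r^{(N)}+R_N^{(r)}$ with $K_r^{(N)}$ the rank-$N$ partial sum; the hereditary substitution of $(1-z_r\bar w_r)K_r^{(N)}$ is positive by the finite version of your computation, while the remainder is handled as follows. The Taylor coefficients $c_{\alpha\beta}$ of the positive holomorphic kernel $R_N^{(r)}$ form a positive semidefinite matrix, so $|c_{\alpha\beta}|\le\sqrt{c_{\alpha\alpha}c_{\beta\beta}}$, and for $s<\rho<1$ Cauchy--Schwarz gives
\begin{equation*}
\sum_{\alpha,\beta}|c_{\alpha\beta}|\,s^{|\alpha|+|\beta|}\;\le\;\Big(\sum_\alpha c_{\alpha\alpha}\rho^{2|\alpha|}\Big)\Big(\sum_\alpha (s/\rho)^{2|\alpha|}\Big),
\end{equation*}
where the first factor is the average of $R_N^{(r)}(z,z)$ over the torus of radius $\rho$ and tends to $0$ by monotone convergence, since Theorem \ref{thmA.1} gives $R_N^{(r)}(z,z)\downarrow 0$ pointwise. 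This forces the hereditary substitution of the remainder at $(sT_1,sT_2)$ to vanish in norm as $N\to\infty$, and letting $s\nearrow 1$ finishes the proof as you indicate.
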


Theorem \ref{thm2.1} provides simple Agler kernels for inner functions. For ease of
notation, we will often denote positive kernels $K(z,w)$ defined on $\mathbb{D}^2 \times \mathbb{D}^2$ by simply $K$.

\begin{rem} \label{rem2.2} Let $\phi \in H^\infty_1$ be inner. From the arguments in the proof of Theorem
\ref{thm2.1}, it is clear that there are positive holomorphic kernels, now denoted $K^{max}_r$ and $K^{min}_r$, such that
\begin{equation} \label{eqn:funker} S^{max}_r = \mathcal{H} \left ( \frac{ K^{max}_r}{1-z_r \bar{w}_r} \right ) \ \text{ and } \ S^{min}_r
= \mathcal{H} \left ( \frac{ K^{min}_r}{1-z_r \bar{w}_r} \right ),\end{equation} for $r=1,2.$ Moreover,
$(K^{max}_1, K^{min}_2)$ and $(K^{min}_1, K^{max}_2)$ are pairs of Agler kernels of $\phi.$ \end{rem}

Our proof of Theorem \ref{thm2.1} provides insight into the uniqueness of Agler decompositions for
inner functions. The following result generalizes part of Theorem 5.10 in \cite{bsv05}.

\begin{thm} \label{thm2.2} Let $\phi \in H^{\infty}_1$ be inner. Then $\phi$ has a unique Agler
decomposition if and only if 
\begin{equation*}\phi L^2_{--} \cap H^2 = \{0\}.\end{equation*} \end{thm}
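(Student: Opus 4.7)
The plan is to reinterpret the hypothesis via Lemma~\ref{lem2.1}. Since $L^2_{*-} \cap L^2_{-*} = L^2_{--}$, one computes
\[ S^{max}_1 \cap S^{max}_2 = H^2 \cap \phi L^2_{*-} \cap \phi L^2_{-*} = \phi L^2_{--} \cap H^2, \]
so the condition $\phi L^2_{--} \cap H^2 = \{0\}$ is equivalent to $S^{max}_1 \cap S^{max}_2 = \{0\}$. Taking orthogonal complements in $\mathcal{H}_\phi$ and recalling that $(S^{max}_1)^\perp = S^{min}_2$ and $(S^{max}_2)^\perp = S^{min}_1$, this is equivalent to $\overline{S^{min}_1 + S^{min}_2} = \mathcal{H}_\phi$. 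Because $S^{min}_1 \subseteq S^{max}_1 \perp S^{min}_2$, the sum is already orthogonal and closed, and combined with $\mathcal{H}_\phi = S^{max}_1 \oplus S^{min}_2$, equality forces $S^{min}_1 = S^{max}_1$, and symmetrically $S^{min}_2 = S^{max}_2$. The theorem therefore reduces to showing that uniqueness of the Agler decomposition is equivalent to $S^{max}_r = S^{min}_r$ for $r=1,2$.

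One direction is essentially immediate: if $S^{max}_1 \neq S^{min}_1$, then Remark~\ref{rem2.2} exhibits two Agler decompositions, $(K^{max}_1, K^{min}_2)$ and $(K^{min}_1, K^{max}_2)$, which must be distinct since the first-slot Hilbert spaces are $S^{max}_1$ and $S^{min}_1$ respectively.

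For the converse, I would assume $S^{max}_r = S^{min}_r$, giving the orthogonal decomposition $\mathcal{H}_\phi = S^{max}_1 \oplus S^{max}_2$, and then show that any Agler kernels $(K_1, K_2)$ of $\phi$ must coincide with $(K^{max}_1, K^{max}_2)$. Setting $L_r := K_r/(1-z_r\bar{w}_r)$, the identity $L_1 + L_2 = K_\phi$ and the Aronszajn sum theorem (Corollary~\ref{corA.1}) place each $\mathcal{H}(L_r)$ contractively inside $\mathcal{H}_\phi$. Since $K_r = (1-z_r\bar{w}_r)L_r$ is a positive kernel, Theorem~\ref{thmA.5} forces $M_{Z_r}$ to be a contraction on $\mathcal{H}(L_r)$, so $\mathcal{H}(L_r)$ is an $M_{Z_r}$-invariant linear subspace of $\mathcal{H}_\phi$; by maximality, $\mathcal{H}(L_r) \subseteq S^{max}_r$. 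Then for each $w \in \mathbb{D}^2$ the identity
\[ L_1(\cdot, w) + L_2(\cdot, w) = K_\phi(\cdot, w) = L_{S^{max}_1}(\cdot, w) + L_{S^{max}_2}(\cdot, w) \]
provides two decompositions of the same vector in the orthogonal direct sum $\mathcal{H}_\phi = S^{max}_1 \oplus S^{max}_2$; these must agree summand-by-summand, so $L_r = L_{S^{max}_r}$ and hence $K_r = K^{max}_r$.

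The main obstacle I anticipate is establishing the inclusion $\mathcal{H}(L_r) \subseteq S^{max}_r$ for an arbitrary Agler decomposition: one must combine set-theoretic containment in $\mathcal{H}_\phi$ (from Aronszajn) with shift-invariance (from positivity of $K_r$), and then use the maximality of $S^{max}_r$ applied to linear subspaces that need not be closed in the $\mathcal{H}_\phi$ norm. Once that inclusion is in place, the orthogonality $\mathcal{H}_\phi = S^{max}_1 \oplus S^{max}_2$ pins down each kernel section uniquely, and the equivalence drops out.
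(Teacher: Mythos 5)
Your proposal is correct and follows essentially the same route as the paper: it reduces the condition $\phi L^2_{--}\cap H^2=\{0\}$ to $S^{max}_r=S^{min}_r$ (the paper records this as $S^{max}_r\ominus S^{min}_r=\phi L^2_{--}\cap H^2$), gets non-uniqueness from the pair $(K^{max}_1,K^{min}_2)\ne(K^{min}_1,K^{max}_2)$ when the spaces differ, and pins down an arbitrary pair $(K_1,K_2)$ by placing each kernel section $K_r(\cdot,w)/(1-z_r\bar w_r)$ in $S^{max}_r$ and invoking uniqueness of the decomposition in $\mathcal{H}_\phi=S^{max}_1\oplus S^{max}_2$. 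The "obstacle" you flag is handled exactly as you suggest, since Lemma \ref{lem2.1} shows $S^{max}_r$ contains every $f\in\mathcal{H}_\phi$ with $Z_r^k f\in\mathcal{H}_\phi$ for all $k$, so closedness of $\mathcal{H}(L_r)$ is not needed.
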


\begin{proof} It follows from the definitions of $S^{max}_r$ and $S^{min}_r$ and from Lemma \ref{lem2.1} that 
\begin{align}
\label{eqn2.2} S^{max}_1 \ominus S^{min}_1 = S^{max}_2 \ominus S^{min}_2 =\phi L^2_{--} \cap H^2.
\end{align} 
($\Rightarrow$) Assume $\phi$ has a unique Agler decomposition. By Remark \ref{rem2.2},
this implies 
\begin{equation*} (K^{max}_1,K^{min}_2) = (K^{min}_1,K^{max}_2). \end{equation*} 
By the representation of
$S^{max}_r$ and $S^{min}_r$ in Remark \ref{rem2.2}, we must have $S^{max}_r = S^{min}_r.$ Using
(\ref{eqn2.2}), this implies $\phi
L^2_{--} \cap H^2 =\{0\}.$ \bigskip

\noindent
($\Leftarrow$) Assume $\phi L^2_{--} \cap H^2 = \{0\}$. Then by (\ref{eqn2.2}), each $S^{max}_r =S^{min}_r.$ 
As the kernels in $(\ref{eqn:funker})$ are obtained by projecting the kernel $K_{\phi}$ from (\ref{eqn:ker}) onto the associated
$S^{max}_r / S^{min}_r$  space, it follows that each $K^{max}_r = K^{min}_r.$ 
 In particular,
$(K^{min}_1, K^{min}_2)$ is a pair of Agler kernels of $\phi.$ Let $(L_1,L_2)$ be another pair of
Agler kernels. Then, we have 
\begin{align}
 \nonumber \frac{1
-\phi(z)\overline{\phi(w)}}{(1-z_1\bar{w}_1)(1-z_2\bar{w}_2)} &=
\frac{L_1}{1-z_1\bar{w}_1} + \frac{L_2}{1-z_2\bar{w}_2} \\ &=
\frac{K^{min}_1}{1-z_1\bar{w}_1} + \frac{K^{min}_2}{1-z_2\bar{w}_2}. \label{eqn2.4}
\end{align} 
For each fixed $w \in \mathbb{D}^2$ and $r=1,2$,
 \begin{equation*} \frac{K^{min}_r}{1-Z_r\bar{w}_r},
\frac{L_r}{1-Z_r\bar{w}_r} \in S^{max}_r = S^{min}_r.\end{equation*} As $S^{min}_1 \perp S^{min}_2,$ the
decomposition in (\ref{eqn2.4}) is unique for each $w.$ It follows that for $r=1,2,$\begin{equation*}
\frac{L_r}{1-Z_r\bar{w}_r}  =  \frac{K^{min}_r}{1-Z_r\bar{w}_r}.\end{equation*} As
$L_1=K^{min}_1$ and $L_2=K^{min}_2$, then $\phi$ has a unique Agler decomposition. \end{proof}

We also observe that certain functions have extremely non-unique Agler decompositions. Recall that a
function $\phi$ is an \emph{extreme point} of $H^{\infty}_1$ if and only if there is \textit{no}
$f \in H_1^{\infty}$ such that $\phi \pm f \in H^{\infty}_1.$

\begin{thm} \label{thm1.3} If $\phi \in H^{\infty}_1$ is not an extreme point, then $\phi$ does not have a
unique Agler decomposition. \end{thm}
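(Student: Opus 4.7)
The plan is to exploit the hypothesis directly: since $\phi$ is not an extreme point of $H^\infty_1$, by definition there exists a nonzero $f \in H^\infty_1$ with $\phi+f, \phi - f \in H^\infty_1.$ I would then apply Theorem \ref{thm2.1} to both $\phi+f$ and $\phi - f$ to obtain Agler kernels $(K_1^+, K_2^+)$ of $\phi+f$ and $(K_1^-, K_2^-)$ of $\phi - f$. The key computation is to average the two resulting Agler identities. Expanding $(\phi+f)(z)\overline{(\phi+f)(w)} + (\phi-f)(z)\overline{(\phi-f)(w)} = 2\phi(z)\overline{\phi(w)} + 2f(z)\overline{f(w)},$ and setting $K_r^{avg} := (K_r^+ + K_r^-)/2$, the averaged identity becomes
\begin{equation*}
1 - \phi(z)\overline{\phi(w)} = f(z)\overline{f(w)} + (1-z_1\bar w_1)K_2^{avg}(z,w) + (1-z_2\bar w_2)K_1^{avg}(z,w).
\end{equation*}

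The next step is to observe that the leftover term $f(z)\overline{f(w)}$ can be absorbed into either of the two Agler summands. Indeed, $f(z)\overline{f(w)}$ is a rank-one positive holomorphic kernel, so multiplying it by the (positive) Szeg\H{o} kernels $1/(1-z_1\bar w_1)$ or $1/(1-z_2\bar w_2)$ produces positive holomorphic kernels. This yields two distinct Agler decompositions of $\phi$, namely
\begin{equation*}
\bigl(K_1^{avg},\ K_2^{avg} + \tfrac{f(z)\overline{f(w)}}{1-z_1\bar w_1}\bigr) \quad \text{and} \quad \bigl(K_1^{avg} + \tfrac{f(z)\overline{f(w)}}{1-z_2\bar w_2},\ K_2^{avg}\bigr).
\end{equation*}

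These two decompositions differ precisely by the nonzero kernel $f(z)\overline{f(w)}/(1-z_1\bar w_1)$ in the second slot (and an analogous discrepancy in the first), so they are genuinely distinct. Since $f \not\equiv 0$ in $H^\infty_1$ implies $f(z)\overline{f(w)}$ is not the zero kernel, this establishes that $\phi$ admits at least two distinct Agler decompositions.

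The only potential obstacle is verifying that $f \not\equiv 0$ really yields a nonzero positive kernel $f(z)\overline{f(w)}/(1-z_r\bar w_r)$ — but this is immediate since $f(\lambda)\overline{f(\lambda)} = |f(\lambda)|^2 > 0$ at any point $\lambda$ where $f(\lambda)\neq 0$. Everything else is formal manipulation of the Agler identity, so the argument is short and self-contained once the averaging idea is set up.
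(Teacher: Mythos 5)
Your proposal is correct and is essentially the paper's own argument: average the Agler identities for $\phi+f$ and $\phi-f$, then absorb the leftover rank-one kernel $f(z)\overline{f(w)}$ into one Szeg\H{o} factor or the other. The paper does exactly this but distributes the leftover term with a parameter $t\in[0,1]$, splitting it as $t$ into the first slot and $1-t$ into the second, thereby exhibiting infinitely many decompositions rather than just your two endpoints.
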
 

\begin{proof} Assume $\phi$ is not extreme. Then, there is some $f \in
H^{\infty}_1$ such that $\phi \pm f \in H^{\infty}_1$, and so there are Agler kernels $(K_1,K_2)$ and
$(L_1,L_2)$ satisfying
\begin{align} 1- (\phi+f)(z)\overline{(\phi+f)(w)} &=
(1-z_1\bar{w}_1)K_2 + (1-z_2\bar{w}_2)K_1, \label{eqn2.5}\\ 1-
(\phi-f)(z)\overline{(\phi-f)(w)} &= (1-z_1\bar{w}_1)L_2 +
(1-z_2\bar{w}_2)L_1. \label{eqn2.6} \end{align}
Adding (\ref{eqn2.5}) and (\ref{eqn2.6}) and dividing the resultant equation by $2$ yields \begin{align*} 1-
\phi(z)\overline{\phi(w)} - f(z)\overline{f(w)} =
(1-z_1\bar{w}_1)\tfrac{K_2+L_2}{2} + (1-z_2\bar{w}_2)\tfrac{K_1+L_1}{2},
\end{align*} which implies 
\begin{align*} 1- \phi(z)\overline{\phi(w)} = &
(1-z_1\bar{w}_1) \left( \frac{K_2+L_2}{2} + t\frac{
f(z)\overline{f(w)}}{1-z_1\bar{w}_1} \right )
\\ & +  (1-z_2\bar{w}_2) \left(
\frac{K_1+L_1}{2} 
 + (1-t)\frac{ f(z)\overline{f(w)}}{1-z_2\bar{w}_2} \right),\end{align*} for
any $t \in [0,1]$. Hence, $\phi$ has infinitely many Agler decompositions.\end{proof}

\section{Analysis of Agler Spaces}
In the previous section, we showed that for $\phi$ inner, the subspaces $S^{max}_r$ and $S^{min}_r$
yield simple Agler decompositions. In this section, we will use these subspaces to analyze the
properties of similar spaces associated to general Schur functions.

As before, for ease of notation, we will often denote kernels defined 
on the bidisk simply by $K,$ instead of by $K(z, w).$

\begin{defn} \label{rem3.1} Let $\phi \in H^{\infty}_1,$ and let $ (K_1, K_2)$ denote a
pair of Agler kernels of $\phi$. Define the Hilbert spaces
 \begin{equation*}S^{K}_1 := \mathcal{H} \left(\frac{K_1}{1-z_1 \bar{w}_1}
\right) \  \text{ and } \  S^{K}_2 :=\mathcal{H} \left (\frac{K_2}{1-z_2 \bar{w}_2} \right).\end{equation*} 
We call
$S^K_1$ and $S^K_2$ \emph{Agler spaces of $\phi$.} By definition, $(K_1,K_2)$ satisfy 
\begin{align} \label{eqn3.1}
1-\phi(z)\overline{\phi(w)} = (1 - z_1 \bar{w}_1)K_2 +(1-z_2 \bar{w}_2)K_1,
\end{align} 
which immediately implies 
\begin{equation*}\frac{1-\phi(z)\overline{\phi(w)}}{(1 - z_1
\bar{w}_1)(1-z_2 \bar{w}_2)} = \frac{K_1}{1-z_1 \bar{w}_1} +
\frac{K_2}{1-z_2 \bar{w}_2}.\end{equation*} 
Arithmetic and an application of Corollary \ref{corA.1} can
be used to show that $S^K_1$, $S^K_2$, $\mathcal{H}(K_1),$ and $\mathcal{H}(K_2)$ are all contractively contained in
$\mathcal{H}_{\phi}$ and hence, in $H^2$. Moreover, it follows from Theorem \ref{thmA.5} that $S_r$ is invariant
under $M_{Z_r},$ and $\|M_{Z_r}\|_{S_r} \le 1$ for $r=1,2.$ \end{defn}

By Remark \ref{rem2.2}, $S^{max}_r$ and $S^{min}_r$ are special cases of the $S^K_r$ spaces. We will
show that certain properties of $S^{max}_r$ and $S^{min}_r$ extend to general Agler spaces.  First, recall that 
$X_{r}$ denotes the backward shift operator in the $z_r$ coordinate for $r=1,2.$ Specifically
$X_1$ and $X_2$  are defined by
\begin{equation*} (X_1 g) (z) = \frac{g(z)-g(0,z_2)}{z_1}, \ \text{ and } \ (X_2 g) (z) = \frac{g(z)-g(z_1,0)}{z_2},  \end{equation*}
for $g \in H^2.$
We will use the following result of
Alpay-Bolotnikov-Dijksma-Sadosky from \cite[Theorem 2.5]{abds01}:

\begin{thm} \label{lem3.1.a} Let $\phi \in H_1^{\infty}.$ Then $\mathcal{H}_{\phi}$ is invariant under $X_{r}$
for $r=1,2,$ and for  $f \in \mathcal{H}_{\phi}, $
\begin{align*} \|X_{1} f \|^2_{\mathcal{H}_{\phi}} &\le \| f \|^2_{\mathcal{H}_{\phi}} - \|f(0,z_2)
\|^2_{H^2}, \\ \|X_{2} f \|^2_{\mathcal{H}_{\phi}} &\le \| f \|^2_{\mathcal{H}_{\phi}} - \|f(z_1,0)
\|^2_{H^2}. \end{align*} \end{thm}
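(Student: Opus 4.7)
The plan is to prove the theorem first for inner $\phi$---where $\mathcal{H}_{\phi}$ coincides isometrically with $H^{2}\ominus \phi H^{2}$ and everything reduces to a Fourier computation---and then extend to general Schur $\phi$ by the same normal-families limiting argument used at the end of the proof of Theorem \ref{thm2.1}.

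For inner $\phi$, Remark \ref{rem2.1.b} gives $\mathcal{H}_{\phi}=H^{2}\ominus \phi H^{2}$ with the inherited $H^{2}$ norm. Because $\phi$ is inner, $M_{\phi}$ is an isometry on $H^{2}$, hence $\phi H^{2}$ is closed and invariant under $M_{Z_{1}}$ via $\phi g \mapsto \phi(Z_{1}g)$. Taking $H^{2}$-adjoints, and noting $M_{Z_{1}}^{\ast}=X_{1}$, shows that $X_{1}$ preserves $(\phi H^{2})^{\perp}=\mathcal{H}_{\phi}$. The inequality then follows (in fact with equality) from the elementary monomial computation
\[
\|X_{1}f\|_{H^{2}}^{2} \;=\; \|f\|_{H^{2}}^{2} - \|f(0,z_{2})\|_{H^{2}(\mathbb{D})}^{2},
\]
combined with the fact that $\|\cdot\|_{\mathcal{H}_{\phi}}=\|\cdot\|_{H^{2}}$ on $\mathcal{H}_{\phi}$ in the inner case. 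The statement for $X_{2}$ is symmetric.

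For general $\phi \in H^{\infty}_{1}$, apply Theorem 5.5.1 of \cite{rud69} to produce inner $\phi^{n}\to \phi$ locally uniformly on $\mathbb{D}^{2}$, so $K_{\phi^{n}}\to K_{\phi}$ locally uniformly on $\mathbb{D}^{2}\times \mathbb{D}^{2}$. Recast the desired operator inequality as positivity of an auxiliary kernel: the assertion that the block map $V f := (X_{1}f,\, f(0,z_{2}))$ from $\mathcal{H}_{\phi}$ into $\mathcal{H}_{\phi}\oplus H^{2}(\mathbb{D})$ is a contraction is, via Gram matrices of finite sums $\sum c_{i}K_{\phi}(\cdot,w^{i})$, equivalent to positive semidefiniteness of a matrix whose entries are polynomial expressions in $K_{\phi}(w^{j},w^{i})$, $K_{\phi}((0,w_{2}^{j}),(0,w_{2}^{i}))$, and values of a divided-difference kernel built from $K_{\phi}$ in $z_{1}$ and $\bar{w}_{1}$. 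The inner case guarantees positive semidefiniteness of the analogous matrix built from $K_{\phi^{n}}$, and positive semidefiniteness persists under entrywise convergence, so the inequality passes to the limit. A standard density argument then extends $X_{1}$ by continuity to all of $\mathcal{H}_{\phi}$ and identifies it with the pointwise-defined backward shift.

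The main technical obstacle is the bookkeeping in the reduction of the operator inequality to a concrete kernel-positivity condition, because $X_{1}f$ and $f(0,z_{2})$ must be rewritten as linear combinations of kernel values alone---and $X_{1}f$ is not a priori in $\mathcal{H}_{\phi}$ before invariance has been proven. A convenient device is to carry out all initial computations in the ambient space $H^{2}$, where $X_{1}=M_{Z_{1}}^{\ast}$ is globally defined, and only at the end promote the inequality to the $\mathcal{H}_{\phi}$ norm using the contractive containment $\mathcal{H}_{\phi}\subseteq H^{2}$ established via Theorem \ref{thm2.1} together with the inner approximation step.
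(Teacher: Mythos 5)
The paper itself does not prove Theorem \ref{lem3.1.a}; it is quoted from \cite[Theorem 2.5]{abds01}, so there is no internal proof to compare against. Your inner case is correct and clean: $\phi H^{2}$ is $M_{Z_1}$-invariant, hence $\mathcal{H}_{\phi}=(\phi H^{2})^{\perp}$ is invariant under $M_{Z_1}^{*}=X_{1}$, and the Fourier identity $\|X_{1}f\|_{H^2}^{2}=\|f\|_{H^2}^{2}-\|f(0,z_2)\|_{H^2}^{2}$ gives the inequality (with equality) since the $\mathcal{H}_{\phi}$-norm is the inherited $H^{2}$-norm. The approximation strategy for general $\phi$ is viable, but two steps need repair. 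First, your dualization points the wrong way: the quantity $\|X_{1}(\sum_i c_i K_{\phi^n}(\cdot,w^i))\|_{\mathcal{H}_{\phi^n}}$ is \emph{not} expressible in kernel values, so testing $V$ on finite sums of kernels does not yield a matrix inequality that survives the limit. The correct maneuver is to define the candidate \emph{adjoint} on kernel elements, $V^{*}(K_{\phi,w},0)=\bar{w}_1^{-1}\bigl(K_{\phi,w}-K_{\phi,(0,w_2)}\bigr)$ and $V^{*}\bigl(0,(1-z_2\bar{w}_2)^{-1}\bigr)=K_{\phi,(0,w_2)}$, verify that this densely defined map is a contraction via a Gram inequality whose entries really are kernel values (and which therefore persists under $K_{\phi^n}\to K_{\phi}$), and then observe that its adjoint is $f\mapsto(X_{1}f,f(0,\cdot))$; this yields invariance and the norm bound at once. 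Your mention of a divided-difference kernel suggests you have this in mind, but as written the reduction does not go through. Second, the closing ``device'' is incorrect as stated: contractive containment $\mathcal{H}_{\phi}\subseteq H^{2}$ gives $\|f\|_{H^2}\le\|f\|_{\mathcal{H}_{\phi}}$, which cannot be used to promote an $H^{2}$ inequality to an $\mathcal{H}_{\phi}$ inequality.

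You should also note that the inner-approximation step can be avoided entirely by the same algebraic trick the paper uses to prove Proposition \ref{prop3.1}. One checks directly that
\begin{equation*}
K_{\phi}(z,w)+\frac{\phi(z)\overline{\phi(w)}}{1-z_2\bar{w}_2}
=\frac{1}{1-z_2\bar{w}_2}+z_1\bar{w}_1K_{\phi}(z,w),
\end{equation*}
so by Theorem \ref{thmA.3} the space $\mathcal{H}_{\phi}$ is contractively contained in $H^2_{2}+Z_1\mathcal{H}_{\phi}$. Every $f\in\mathcal{H}_{\phi}$ then decomposes, uniquely, as $f=f(0,z_2)+z_1(X_{1}f)$ with $f(0,z_2)\in H^2_{2}$ and $X_{1}f\in\mathcal{H}_{\phi}$, and the minimality of the sum-space norm gives $\|f\|^{2}_{\mathcal{H}_{\phi}}\ge\|f(0,z_2)\|^{2}_{H^2}+\|X_{1}f\|^{2}_{\mathcal{H}_{\phi}}$. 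This proves the theorem for arbitrary Schur $\phi$ in one step and is almost certainly the intended proof.
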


Observe the following fact:

\begin{lem} \label{lem3.1} Let $\phi \in H^{\infty}_1$ be inner. Then, $S^{max}_1$ and $S^{min}_1$ are invariant under
$X_{2},$ and $S^{max}_2$ and $S^{min}_2$ are invariant under $X_{1}$. \end{lem}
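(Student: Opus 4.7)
The plan is to establish the $X_2$-invariance of both $S^{max}_1$ and $S^{min}_1$ directly, and then deduce the two $X_1$-invariance statements for $S^{max}_2$ and $S^{min}_2$ by interchanging the two coordinates in the argument. A useful preliminary observation is that since each of these subspaces sits inside $\mathcal{H}_\phi$ (which inherits the $H^2$ inner product), Theorem \ref{lem3.1.a} immediately gives $X_r f \in \mathcal{H}_\phi \subseteq H^2$ and also the contractivity (hence continuity) of $X_r$ on $\mathcal{H}_\phi$. So the only real task is to verify that $X_2 f$ lies in the correct subspace.

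For $S^{max}_1 = H^2 \cap \phi L^2_{*-}$, I would write $f = \phi h$ with $h \in L^2_{*-}$ and use the identity $X_2 f = \bar z_2 (f - f(\cdot, 0))$, valid a.e.\ on $\mathbb{T}^2$ since $|z_2| = 1$. Multiplying by $\bar\phi$ yields
\[ \bar\phi\, X_2 f \;=\; \bar z_2 h \;-\; \bar z_2 \bar\phi\, f(\cdot, 0). \]
A Fourier-support check then finishes things off: $\bar z_2 h$ lies in $L^2_{*-}$ because $h$ has support in $\{n_2 \leq -1\}$; and $\bar z_2 \bar\phi\, f(\cdot, 0)$ lies in $L^2_{*-}$ because $\bar\phi$ has Fourier support in $\{n_1 \leq 0,\; n_2 \leq 0\}$ and $f(\cdot, 0)$ in $\{n_1 \geq 0,\; n_2 = 0\}$, so their product sits in $\{n_2 \leq 0\}$, which the extra $\bar z_2$ pushes into $\{n_2 \leq -1\} \subseteq L^2_{*-}$. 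Hence $X_2 f \in \phi L^2_{*-}$, and combining with $X_2 f \in H^2$ gives $X_2 f \in S^{max}_1$.

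For $S^{min}_1 = \overline{P_{H^2} \phi L^2_{+-}}$, I would adapt the explicit description derived inside the proof of Lemma \ref{lem2.1}, now with the roles of $z_1$ and $z_2$ exchanged. Elements of $L^2_{+-}$ of the form $f(z) = \sum_{m=1}^M f_m(z_1) z_2^{-m}$ with $f_m \in H^2(\mathbb{D})$ are dense in $L^2_{+-}$, and the same Fourier-coefficient bookkeeping gives
\[ P_{H^2}(\phi f)(z) \;=\; \sum_{m=1}^M (X_2^m \phi)(z)\, f_m(z_1), \]
so these elements form a dense subset $V' \subseteq S^{min}_1$. A direct computation yields
\[ X_2\bigl[(X_2^m \phi)(z)\, f_m(z_1)\bigr] \;=\; (X_2^{m+1}\phi)(z)\, f_m(z_1), \]
which is again in $V'$. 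Since $X_2$ is bounded on $\mathcal{H}_\phi$, continuity propagates the invariance from $V'$ to its closure $S^{min}_1$. Finally, swapping the roles of the two coordinates throughout both arguments delivers the $X_1$-invariance of $S^{max}_2$ and $S^{min}_2$.

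The main obstacle is the Fourier-support bookkeeping in the $S^{max}_1$ case: the correction term $f(\cdot, 0)$ introduced by the backward shift is precisely what one might worry could push things out of $L^2_{*-}$, but it turns out that $\bar\phi$, being the conjugate of an inner function, is supported in the third-quadrant of Fourier frequencies on $\mathbb{T}^2$, and that (combined with the single factor of $\bar z_2$) leaves just enough room to land in $L^2_{*-}$.
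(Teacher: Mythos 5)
Your proof is correct. For $S^{min}_1$ you argue exactly as the paper does: pass to the dense subset of finite sums $\sum_{m=1}^M (X_2^m\phi)(z)f_m(z_1)$, observe that $X_2$ carries each such sum to another one (raising $m$ by one), and propagate the invariance to the closure by continuity of $X_2$. For $S^{max}_1$, however, you take a genuinely different route. The paper uses the intrinsic description $S^{max}_1=\{f\in\mathcal{H}_{\phi}: Z_1^k f\in\mathcal{H}_{\phi} \text{ for all } k\}$: since $X_2$ commutes with $M_{Z_1}$ and $\mathcal{H}_{\phi}$ is $X_2$-invariant by Theorem \ref{lem3.1.a}, one gets $Z_1^k X_2 f = X_2 Z_1^k f\in\mathcal{H}_{\phi}$ for every $k$, and the invariance is immediate. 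You instead work from the boundary-value characterization $S^{max}_1=H^2\cap\phi L^2_{*-}$ of Lemma \ref{lem2.1} and track Fourier supports; the crucial point, which you identify correctly, is that $\bar\phi$ is supported in $\{n_1\le 0,\ n_2\le 0\}$, so the correction term $\bar z_2\bar\phi f(\cdot,0)$ still lands in $L^2_{*-}$. Both arguments are sound. The paper's is a one-liner once Theorem \ref{lem3.1.a} is available; yours is more self-contained for the $S^{max}$ case (only the elementary fact that $X_2$ preserves $H^2$ is needed there, not the contractivity estimate), at the cost of the convolution bookkeeping --- which is legitimate because $\bar\phi\in L^{\infty}$, so the Fourier coefficients of the products involved are given by absolutely convergent convolutions.
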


\begin{proof} It follows from the arguments in Lemma \ref{lem2.1} that
\begin{align}
\label{eqn3.2a} S^{max}_1 &= \  \big \{ f \in \mathcal{H}_{\phi} : Z_1^kf \in \mathcal{H}_{\phi}, \ \forall \ k \in
\mathbb{N} \big \}, \\ 
\label{eqn3.2b} S^{min}_1 &=  \ clos_{H^2} \Big \{ \sum_{m=1}^M  \big (X^{m}_{2}\phi \big )(z) f_m(z_1): M \in \mathbb{N}, \ f_m \in H^2(\mathbb{D}) \Big\},\end{align}
where $clos_{H^2}$ indicates that we are taking the closure of the set in $H^2.$
 It follows from (\ref{eqn3.2a}) and the
$X_{2}$-invariance of $\mathcal{H}_{\phi}$ that $S^{max}_1$ is invariant under $X_{2}$. It is
clear from (\ref{eqn3.2b}) and the fact that $X_2$ is a contraction on $H^2$ that $S^{min}_1$ is invariant under $X_{2}$. The result follows
for $S^{max}_2$ and $S^{min}_2$ by symmetry. \end{proof}

We will show that the properties listed in Lemma \ref{lem3.1} also hold for general Agler spaces.  First, for $r=1,2,$ let $H^2_{r}$
denote the space $H^2(\mathbb{D})$ with independent variable $z_r.$ Specifically, we have
\begin{equation*} H^2_r = \mathcal{H} \left ( \frac{1}{1-z_r \bar{w}_r } \right). \end{equation*}

\begin{prop} \label{prop3.1} Let $\phi \in H^{\infty}_1$ and let $(K_1, K_2)$ be Agler kernels of
$\phi$. Then $S^{K}_1$ is invariant under $X_{2},$ and $S^{K}_2$ is invariant under
$X_{1}.$ Moreover, for all $ f \in S^{K}_2$ and $
g \in S^{K}_1$, 
\begin{align*} \|X_{1} f \|^2_{S^K_2} &\le \| f \|^2_{S^K_2} -
\|f(0,z_2) \|^2_{H^2},\\ 
\|X_{2} g \|^2_{S^K_1} &\le
\| g \|^2_{S^K_1} - \|g(z_1,0) \|^2_{H^2}  . \end{align*}
 \end{prop}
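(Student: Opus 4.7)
The plan is to extract a lurking isometry from the Agler decomposition and use it to simultaneously establish both the $X_1$-invariance of $S^K_2$ and the norm inequality; the $S^K_1$ statements follow by symmetry. First I would divide the Agler decomposition by $1-z_2\bar w_2$ and rearrange so that every summand is a positive kernel---noting that $\phi(z)\overline{\phi(w)}/(1-z_2\bar w_2)$ is positive by the Schur product theorem---yielding
\begin{equation*}
\frac{K_2(z,w)}{1-z_2\bar w_2} + K_1(z,w) + \frac{\phi(z)\overline{\phi(w)}}{1-z_2\bar w_2} = z_1\bar w_1 \frac{K_2(z,w)}{1-z_2\bar w_2} + \frac{1}{1-z_2\bar w_2}.
\end{equation*}

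Next, with Kolmogorov factorizations $K_2(z,w)/(1-z_2\bar w_2) = \langle G(z),G(w)\rangle_{\mathcal{L}}$ and $K_1(z,w) = \langle F(z),F(w)\rangle_{\mathcal{M}}$, and setting $e_z := (z_2^n)_{n\ge 0} \in \ell^2$ so that $\langle e_z,e_w\rangle_{\ell^2} = 1/(1-z_2\bar w_2)$, the lurking-isometry principle produces a linear isometry $V : \overline{\mathrm{span}}\{(z_1 G(z), e_z) : z\in\mathbb{D}^2\} \to \mathcal{L}\oplus\mathcal{M}\oplus\ell^2$ satisfying $V(z_1 G(z),e_z) = (G(z),F(z),\phi(z)e_z)$. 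Reading off its $\mathcal{L}$-valued coordinate as a row block $A = (A_1 \,|\, A_2):\mathcal{L}\oplus\ell^2\to\mathcal{L}$, extended arbitrarily to a contraction on the full space, the defining relation becomes the operator identity $G(z) = z_1 A_1 G(z) + A_2 e_z$; evaluating at $z_1 = 0$ gives $G(0,z_2) = A_2 e_z$, so $(G(z)-G(0,z_2))/z_1 = A_1 G(z)$.

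Finally, for any $f\in S^K_2$ represented minimally as $f(z) = \langle G(z),c\rangle_{\mathcal{L}}$ with $\|f\|_{S^K_2} = \|c\|_{\mathcal{L}}$, these identities give $(X_1 f)(z) = \langle G(z), A_1^*c\rangle_{\mathcal{L}} \in S^K_2$ with $\|X_1 f\|^2_{S^K_2}\le\|A_1^*c\|^2_{\mathcal{L}}$, while $f(0,z_2) = \langle e_z, A_2^*c\rangle_{\ell^2}$ has $\|f(0,z_2)\|^2_{H^2} = \|A_2^*c\|^2_{\ell^2}$. Summing,
\begin{equation*}
\|X_1 f\|^2_{S^K_2} + \|f(0,z_2)\|^2_{H^2} \le \|A_1^*c\|^2_{\mathcal{L}} + \|A_2^*c\|^2_{\ell^2} = \|A^*c\|^2 \le \|c\|^2 = \|f\|^2_{S^K_2},
\end{equation*}
by the contractivity of $A$. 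The main obstacle will be the lurking-isometry construction itself: carefully matching the Kolmogorov factor spaces on the two sides of the identity, and verifying that the extracted block operator $A$ extends to a contraction on all of $\mathcal{L}\oplus\ell^2$.
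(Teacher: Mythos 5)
Your argument is correct, but it takes a genuinely different route from the paper. You rearrange the Agler identity into a positivity statement between two sums of kernels, pass to Kolmogorov factorizations, extract a lurking isometry $V(z_1G(z),e_z)=(G(z),F(z),\phi(z)e_z)$, and read off a contractive block row $A=(A_1\,|\,A_2)$ whose defining relation $G(z)=z_1A_1G(z)+A_2e_z$ realizes $X_1$ on $S^K_2$ as $c\mapsto A_1^*c$; the inequality then follows from $\|A_1^*c\|^2+\|A_2^*c\|^2=\|A^*c\|^2\le\|c\|^2$. This is the colligation/transfer-function-realization method in the spirit of Agler's original work and of Ball--Bolotnikov, and it works (the worries you flag at the end are non-issues: the isometry is defined on the closed span by the inner-product identity, and $A$ extends contractively by composing with the projection onto that span). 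The paper instead solves the identity for $K_1$, obtaining $K_1=\frac{1+z_1\bar w_1K_2}{1-z_2\bar w_2}-\frac{\phi(z)\overline{\phi(w)}+K_2}{1-z_2\bar w_2}$, and invokes only two elementary reproducing-kernel facts: kernel domination gives a contractive containment of the corresponding spaces, and the sum-of-kernels theorem identifies the larger space as $H^2_2+Z_1S^K_2$ with the minimal-decomposition norm, so that $f=f(0,z_2)+z_1(X_1f)$ and the norm inequality drop out at once. Your approach buys an explicit operator model (useful if one wants the full colligation or transfer-function realization), at the cost of the Kolmogorov/lurking-isometry machinery; the paper's buys brevity and stays entirely within the two appendix lemmas it has already set up, which is why the same two-line computation recycles immediately for Proposition~\ref{prop3.2}. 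One small point of care in your write-up: the equality $\|f\|_{S^K_2}=\|c\|_{\mathcal L}$ and the membership $\langle G(\cdot),A_1^*c\rangle\in S^K_2$ both rest on taking the factorization minimal (so that $d\mapsto\langle G(\cdot),d\rangle$ is unitary from $\mathcal L$ onto $S^K_2$); you say ``represented minimally,'' which is exactly the right hypothesis, but it deserves an explicit sentence.
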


\begin{proof} Let $ (K_1,K_2)$ be a pair of Agler kernels of $\phi$. Solving (\ref{eqn3.1}) for $K_1$ yields
 \begin{align}
\label{eqn3.2} K_1 = \frac{1 + z_1 \bar{w}_1K_2 }{1 - z_2 \bar{w}_2}
-\frac{\phi(z)\overline{\phi(w)} + K_2 }{1 - z_2 \bar{w}_2}. \end{align}
 Since the
left-hand-side of (\ref{eqn3.2}) is a positive kernel, it follows from Corollary \ref{corA.1} that 
\begin{align}
\label{eqn3.2c} \mathcal{H} \left(\frac{\phi(z)\overline{\phi(w)} + K_2 }{1 - z_2 \bar{w}_2}
\right ) \subseteq \mathcal{H} \left ( \frac{1 + z_1 \bar{w}_1K_2 }{1 -
z_2 \bar{w}_2}\right),\end{align}
and the containment is contractive. 
Let $f \in S^K_2.$ By Theorem \ref{thmA.3}, 
\begin{equation*} f \in \mathcal{H}
\left (\frac{\phi(z)\overline{\phi(w}) + K_2 }{1 - z_2 \bar{w}_2} \right). \end{equation*} 
Define the Hilbert space
\begin{equation*} Z_1 S^K_2: = \mathcal{H} \left( \frac{z_1 \bar{w}_1K_2 }{1 - z_2
\bar{w}_2}\right). \end{equation*}
Then $Z_1 S^K_2$ consists precisely of functions of the form $Z_1 g$ for
$g \in S^K_2$ and has inner product given by 
\begin{equation*} \langle Z_1 g_1, Z_1 g_2
\rangle_{Z_1S^K_2} = \langle g_1 ,g_2 \rangle_{S^K_2},\end{equation*}
for $Z_1g_1, Z_1g_2 \in
Z_1S^K_2.$ Then, (\ref{eqn3.2c}) paired with Theorem \ref{thmA.3} guarantees that we can
write 
\begin{align*} f(z)
= f_1(z_2) + z_1f_2(z),\end{align*}
for  $f_1 \in H^2_{2}$ and $f_2 \in S^K_2.$  
It clear that $f_1(z_2) = f(0,z_2)$, which means $f_2$ must equal $X_{1}f .$ 
Thus, $X_{1}f \in S^K_2$. As the containment in (\ref{eqn3.2c}) is
contractive and the decomposition of $f$ into $f_1$ and $Z_1f_2$ is unique, it follows from
Theorem \ref{thmA.3} that 
\begin{align*} \| f \|^2_{S^K_2} & \ge  \| f_1 \|^2_{H^2} + \| Z_1 f_2
\|^2_{Z_1 S^K_2} \\ & = \| f_1 \|^2_{H^2} + \| f_2 \|^2_{S^K_2} \\ &= \| f(0,z_2)
\|^2_{H^2} + \| X_{1} f \|^2_{S^K_2}. \end{align*}
 Analogous arguments give the result for $S^K_1$.
\end{proof}

The interested reader should also see \cite{bb11}, where Ball-Bolotnikov discuss the Gleason problem on $\mathcal{H}(K_1) \oplus \mathcal{H}(K_2).$ 
Now, for $\phi$ inner, it follows from Remark \ref{rem2.1} that if $(K_1,K_2)$ are Agler kernels of
$\phi$ such that 
\begin{equation*} \mathcal{H}_{\phi} =S^K_1 \oplus S^K_2, \end{equation*} 
then $ S^{min}_r \subseteq S^K_r$ for $r =1,2.$
When $\phi$ is a general Schur function, there are similar minimal sets. For $r=1,2,$ and  a holomorphic function $\psi$ on $\mathbb{D}^2$, define the set
\begin{equation*} \psi H^2_{r} := \big \{ \psi g:  g\in H^2_{r} \big \}.  \end{equation*}
Then we have the following:

\begin{prop} \label{prop3.2} Let $\phi \in H^{\infty}_1,$ and let $(K_1, K_2)$ be Agler kernels of
$\phi$. Then \begin{equation*} (X_{1} \phi ) H^2_{2} \subseteq S^K_2 \text{ and } (X_{2}
\phi ) H^2_{1} \subseteq S^K_1.\end{equation*} \end{prop}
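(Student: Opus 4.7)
The plan is to reduce the desired inclusion to a kernel-domination statement and then extract the required positivity from a second-order symmetrization of the Agler identity. I prove only the first inclusion $(X_1\phi) H^2_{2} \subseteq S^K_2$; the second follows by swapping the roles of the two coordinates.

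Equipped with the pullback norm $\|(X_1\phi) g\| = \|g\|_{H^2_{2}}$, the space $(X_1\phi) H^2_{2}$ is a reproducing kernel Hilbert space with kernel $(X_1\phi)(z)\overline{(X_1\phi)(w)}/(1 - z_2 \bar{w}_2)$. By Corollary \ref{corA.1}, the containment will follow once I establish that
\[
\frac{K_2(z,w)}{1 - z_2 \bar{w}_2} \ - \ \frac{(X_1\phi)(z)\overline{(X_1\phi)(w)}}{1 - z_2 \bar{w}_2}
\]
is a positive kernel on $\mathbb{D}^2 \times \mathbb{D}^2$.

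The main step will be to take the alternating sum of the Agler identity (\ref{eqn3.1}) evaluated at the four points $(z,w)$, $((0,z_2),w)$, $(z,(0,w_2))$, $((0,z_2),(0,w_2))$ with signs $+,-,-,+$. Using $\phi(z) - \phi(0,z_2) = z_1 (X_1\phi)(z)$, the left side telescopes to $-z_1 \bar{w}_1 (X_1\phi)(z)\overline{(X_1\phi)(w)}$, while the right side groups into $\Delta K_2(z,w) - z_1 \bar{w}_1 K_2(z,w) + (1 - z_2 \bar{w}_2) \Delta K_1(z,w)$, where
\[
\Delta K_r(z,w) := K_r(z,w) - K_r((0,z_2),w) - K_r(z,(0,w_2)) + K_r((0,z_2),(0,w_2)).
\]
Because $\Delta K_r$ vanishes along $\{z_1 = 0\} \cup \{w_1 = 0\}$, dividing by $z_1 \bar{w}_1$ yields well-defined holomorphic-antiholomorphic functions $\widetilde{K}_r := \Delta K_r /(z_1 \bar{w}_1)$, and the rearrangement becomes
\[
K_2(z,w) - (X_1\phi)(z)\overline{(X_1\phi)(w)} \ = \ \widetilde{K}_2(z,w) + (1 - z_2 \bar{w}_2) \widetilde{K}_1(z,w).
\]

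Each $\widetilde{K}_r$ is in fact a positive kernel: taking any Kolmogorov factorization $K_r(z,w) = \langle F_r(z), F_r(w)\rangle$ into an auxiliary Hilbert space, direct expansion gives $\Delta K_r(z,w) = \langle F_r(z) - F_r(0,z_2), F_r(w) - F_r(0,w_2)\rangle$; factoring $F_r(z) - F_r(0,z_2) = z_1 G_r(z)$ with $G_r$ holomorphic then produces $\widetilde{K}_r(z,w) = \langle G_r(z), G_r(w)\rangle$. Dividing the previous displayed identity by $1 - z_2 \bar{w}_2$ now expresses the target difference as $\widetilde{K}_2/(1 - z_2 \bar{w}_2) + \widetilde{K}_1$, which is a sum of two positive kernels (the first being the Schur product of $\widetilde{K}_2$ with the Szeg\Hu{o} kernel in $z_2$), closing the argument. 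The principal obstacle is spotting the correct symmetrization of the Agler identity; once that device is in hand, the remainder is routine kernel arithmetic together with a single appeal to Kolmogorov factorization.
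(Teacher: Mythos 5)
Your proof is correct, and it takes a genuinely different route from the paper. The paper deduces Proposition \ref{prop3.2} as a quick corollary of the machinery already built for Proposition \ref{prop3.1}: from the rearrangement (\ref{eqn3.2}) and Theorem \ref{thmA.3}, every $f=\phi g\in\phi H^2_{2}$ decomposes as $f(0,z_2)+z_1f_2$ with $f_2\in S^K_2$, and one simply identifies $f_2=X_1f=(X_1\phi)g$. You instead prove the sharper kernel inequality directly: your second difference of the Agler identity at the four point-pairs is computed correctly (the constant terms cancel, the left side factors as $-z_1\bar w_1(X_1\phi)(z)\overline{(X_1\phi)(w)}$ because $\phi(z)-\phi(0,z_2)=z_1(X_1\phi)(z)$, and the right side groups exactly as you state), yielding
\begin{equation*}
K_2(z,w)-(X_1\phi)(z)\overline{(X_1\phi)(w)}=\widetilde K_2(z,w)+(1-z_2\bar w_2)\widetilde K_1(z,w),
\end{equation*}
after which positivity of the $\widetilde K_r$ and a Schur product with the reproducing kernel of $H^2_2$ finish the argument via Corollary \ref{corA.1}. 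What your route buys is a quantitative strengthening: it exhibits $K_2(z,w)-(X_1\phi)(z)\overline{(X_1\phi)(w)}$ as an explicitly positive kernel, so $X_1\phi\in\mathcal H(K_2)$ with norm at most $1$ and the inclusion $(X_1\phi)H^2_2\subseteq S^K_2$ is contractive for the pullback norm; the paper's route is shorter because it recycles Proposition \ref{prop3.1} and avoids any new identity. Two small points you should make explicit for completeness: (i) the holomorphic Kolmogorov factorization and the division defining $G_r$ (equivalently, positivity of $\widetilde K_r$) are immediate only where $z_1\neq 0$ --- e.g.\ note that $\Delta K_r/(z_1\bar w_1)$ is a Schur product of the positive kernel $\Delta K_r$ with $(1/z_1)\overline{(1/w_1)}$ there, and then extend to $z_1=0$ or $w_1=0$ by continuity of $\widetilde K_r$; and (ii) the degenerate case $X_1\phi\equiv 0$, where the pullback norm is not defined but the inclusion is trivial. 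Both are routine, and the symmetric argument for $(X_2\phi)H^2_1\subseteq S^K_1$ goes through verbatim.
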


\begin{proof} From the proof of Proposition \ref{prop3.1} and from Theorem \ref{thmA.3}, have 
the following set relationships: 
\begin{equation*} \phi
H^2_{2} = \mathcal{H} \left(\frac{\phi(z)\overline{\phi(w)}}{1 - z_2 \bar{w}_2}
\right ) \subseteq \mathcal{H} \left( \frac{1 + z_1 \bar{w}_1K_2
}{1 - z_2 \bar{w}_2} \right ).\end{equation*} 
Let $g \in H^2_2,$ so that $f= \phi g \in \phi H^2_{2}.$ As in Proposition
\ref{prop3.1}, we can write
 \begin{align*} f(z) = f_1(z_2) + z_1f_2(z),\end{align*}
 for $f_1 \in H^2_{2}$ and  $f_2 \in S^K_2.$  As $f_1$ must equal $f(0,z_2)$, it
follows that $f_2$ equals $X_{1}f$. Thus, $X_{1}f \in S^K_2.$  As
\begin{equation*} \big(X_{1}f \big)(z) =  \big(X_{1}\phi \big)(z) g(z_2),\end{equation*} the desired inclusion follows.
 Analogous arguments
give the result for $S^K_1$. \end{proof}

\begin{rem} \label{rem3.2} The arguments in Propositions \ref{prop3.1} and \ref{prop3.2} generalize to the
case where $\phi$ is in the Schur-Agler class of $\mathbb{D}^d$. Given positive holomorphic kernels $(K_1, \dots, K_d)$ such that
\begin{equation*}1-\phi(z)\overline{\phi(w)} = (1-z_1 \bar{w}_1)K_1+ \dots + (1-z_d\bar{w}_d)K_d,\end{equation*} and $r \in \{ 1, \dots, d \}$, it is easy to show that
\begin{align*}
&(1)&&  \mathcal{H} \left( \dfrac{K_r(z,w)}{ \prod_{j \ne r} (1-
z_j\bar{w}_j)}\right) \text{ is invariant under } X_{r}.  \\
&(2)&&  X_{r}f \in\mathcal{H} \left( \dfrac{K_r(z,w)}{ \prod_{j \ne r} (1-
z_j\bar{w}_j)}\right) \text{ for all } f \in \mathcal{H} \left( \frac{
\phi(z)\overline{\phi(w)}}{\prod_{j \ne r}(1 - z_j \bar{w}_j)} \right). 
\end{align*} These results look slightly different from Propositions \ref{prop3.1} and
\ref{prop3.2} because on $\mathbb{D}^d,$ it makes sense to number the kernels differently. \end{rem}

Recall that the Agler decompositions constructed in Section 2 for inner functions were obtained
via an orthogonal decomposition \begin{equation*} \mathcal{H}_{\phi} = S_1 \oplus S_2,\end{equation*} where $Z_r S_r \subseteq S_r$
and $\| M_{Z_r} \|_{S_r} \le 1$ for $r=1,2.$ It thus makes sense  to ask:
\begin{center}``For which Schur
functions $\phi$ does there exist such an orthogonal decomposition of $\mathcal{H}_{\phi}$?" \end{center}
Such orthogonal
decompositions will yield Agler decompositions as in the proof of Theorem \ref{thm2.1}.
The previous propositions allow us to characterize such Schur functions. For $\phi \in H^{\infty}_1,$ define: \begin{align}
\label{min1} V_1 &:= \Big \{ \sum_{m=1}^M (X^m_{2} \phi)(z)  f_m(z_1) :M \in \mathbb{N}, \ f_m \in H^2(\mathbb{D})\Big \}, \\ 
\label{min2} V_2 &:= \Big \{ \sum_{m=1}^M
(X^m_{1}\phi )(z) f_m(z_2) : \ M \in \mathbb{N}, \ f_m \in H^2(\mathbb{D})
\Big \}, \end{align} and define the closed subspaces $S^{min}_1:= clos_{\mathcal{H}_{\phi}} V_1$ and
$S^{min}_2:= clos_{\mathcal{H}_{\phi}} V_2.$ It follows from the proof of Lemma \ref{lem2.1}
that for $\phi$ inner, this definition of
$S^{min}_r$ is equivalent to the one given in Section 2. Observe that Propositions
\ref{prop3.1} and \ref{prop3.2} imply that each $V_r \subseteq S^K_r$ for any Agler spaces $(S^K_1,S^K_2)$ of $\phi$.

\begin{thm} \label{thm3.4} Let $\phi \in H^{\infty}_1.$ Then $\mathcal{H}_{\phi}$ has an orthogonal decomposition
\begin{equation*}\mathcal{H}_{\phi} = S_1 \oplus S_2,\end{equation*} 
into closed subspaces $S_1$ and $S_2$ such that $Z_r S_r \subseteq S_r$ and $\| M_{Z_r} \|_{S_r} \le
1$ for $r=1,2$ if and only if $S^{min}_1 \perp S^{min}_2$ in $\mathcal{H}_{\phi}.$ \end{thm}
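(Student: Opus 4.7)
My plan is to combine Theorem \ref{thmA.5} (which translates $M_{Z_r}$-invariance and contractivity on a RKHS into positivity of $(1 - z_r\bar w_r)L$) with Propositions \ref{prop3.1} and \ref{prop3.2}, which together show that $V_r$ lies inside every Agler space $S^K_r$.

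For the forward direction, I would assume $\mathcal{H}_\phi = S_1 \oplus S_2$ orthogonally with the stated invariance and contraction properties. Since each $S_r$ is a closed subspace of the RKHS $\mathcal{H}_\phi$, it is itself a RKHS with reproducing kernel $L_{S_r}(z,w) = P_{S_r}[K_\phi(\cdot,w)](z)$, and Theorem \ref{thmA.5} gives that $K_r := (1 - z_r\bar w_r)L_{S_r}$ is a positive holomorphic kernel. The orthogonality yields $K_\phi = L_{S_1} + L_{S_2}$, and clearing denominators produces the Agler identity for $(K_1, K_2)$, so $(K_1, K_2)$ are Agler kernels of $\phi$ and $S_r$ coincides with the Agler space $S^K_r$ as a RKHS. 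Proposition \ref{prop3.2} then places $V_r \subseteq S^K_r = S_r$, and taking closures in $\mathcal{H}_\phi$ gives $S^{min}_r \subseteq S_r$. The orthogonality $S_1 \perp S_2$ passes to the inclusions, yielding $S^{min}_1 \perp S^{min}_2$.

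For the backward direction, I would assume $S^{min}_1 \perp S^{min}_2$ and set $S_2 := S^{min}_2$ and $S_1 := \mathcal{H}_\phi \ominus S^{min}_2$, so that $\mathcal{H}_\phi = S_1 \oplus S_2$ orthogonally and $S^{min}_1 \subseteq S_1$ by hypothesis. To finish, I need to verify $Z_r S_r \subseteq S_r$ and $\|M_{Z_r}\|_{S_r} \le 1$ in the $\mathcal{H}_\phi$ norm for $r = 1, 2$; by Theorem \ref{thmA.5} this is equivalent to showing $(1 - z_r\bar w_r)L_{S_r}$ is a positive holomorphic kernel, where $L_{S_r}$ is the inherited reproducing kernel of $S_r$.

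The main obstacle will be that for non-inner $\phi$ the space $\mathcal{H}_\phi$ is not itself $M_{Z_r}$-invariant, so the contraction property must be extracted intrinsically from the structure of $S_r$ rather than restricted from an ambient operator. My plan is to fix Agler kernels $(\tilde K_1, \tilde K_2)$ of $\phi$ via Theorem \ref{thm2.1}, giving $K_\phi = \tilde L_1 + \tilde L_2$ with $\tilde L_r = \tilde K_r/(1 - z_r\bar w_r)$, and to use the infimum-of-norms description
\[
\|f\|^2_{\mathcal{H}_\phi} = \inf\bigl\{\|g_1\|^2_{S^{\tilde K}_1} + \|g_2\|^2_{S^{\tilde K}_2} : f = g_1 + g_2,\ g_r \in S^{\tilde K}_r\bigr\}
\]
arising from the sum-of-RKHS identity $\mathcal{H}_\phi = S^{\tilde K}_1 + S^{\tilde K}_2$. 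For $f \in V_2$, the $M_{Z_2}$-invariance of $V_2$ is immediate by replacing each $f_m(z_2)$ with $z_2 f_m(z_2)$ in the defining sum; the orthogonality hypothesis should then let me choose optimal decompositions $f = g_1 + g_2$ that split along $S^{min}_1$ and $S^{min}_2$, so the $M_{Z_2}$-contraction property on $S^{\tilde K}_2$ transfers to the required $\mathcal{H}_\phi$-norm bound $\|z_2 f\|_{\mathcal{H}_\phi} \le \|f\|_{\mathcal{H}_\phi}$. An analogous argument on $S_1$, together with the identity $L_{S_1} = K_\phi - L_{S^{min}_2}$ and the positivity of $\tilde K_1$, will give positivity of $(1 - z_1\bar w_1)L_{S_1}$ and complete the backward direction.
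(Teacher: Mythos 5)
Your forward direction matches the paper's argument and is correct. The backward direction, however, has a genuine gap at exactly the point you flag as ``the main obstacle.'' The paper's proof of that direction rests on one concrete computation that your plan does not contain: the formula for the adjoint of the backward shift in $\mathcal{H}_{\phi}$,
\[
(X^*_{1}f)(w) \;=\; w_1 f(w) \;-\; \Big\langle f, \tfrac{X_{1}\phi}{1-Z_2\bar{w}_2}\Big\rangle_{\mathcal{H}_{\phi}}\,\phi(w),
\]
together with the observation that the defect vector $\tfrac{X_{1}\phi}{1-Z_2\bar{w}_2}$ lies in $S^{min}_2$ (and symmetrically $\tfrac{X_{2}\phi}{1-Z_1\bar{w}_1}\in S^{min}_1$). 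Under the hypothesis $S^{min}_1\perp S^{min}_2$, this identity shows that $X^{*}_{r}$ acts as $M_{Z_r}$ on the relevant subspace and its orthogonal complement. That single fact does three jobs at once: it proves $Z_r$ maps these spaces back into $\mathcal{H}_{\phi}$ at all (nontrivial for non-inner $\phi$, since $\mathcal{H}_{\phi}$ is not $M_{Z_r}$-invariant); it gives the invariance of the complementary space via $\langle Z_2 g, f\rangle_{\mathcal{H}_{\phi}} = \langle X^{*}_{2}g, f\rangle_{\mathcal{H}_{\phi}} = \langle g, X_{2}f\rangle_{\mathcal{H}_{\phi}} = 0$ for $f\in V_1$, using $X_2 f \in V_1$; and it yields the contraction bound from $\|X_r\|_{\mathcal{H}_{\phi}}\le 1$ (Theorem \ref{lem3.1.a}).

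Your proposed substitute does not close this. The minimum-norm formula only yields $\|Z_2 f\|_{\mathcal{H}_{\phi}} \le \|Z_2 f\|_{S^{\tilde K}_2} \le \|f\|_{S^{\tilde K}_2}$, while the contractive containment $S^{\tilde K}_2 \subseteq \mathcal{H}_{\phi}$ gives $\|f\|_{\mathcal{H}_{\phi}} \le \|f\|_{S^{\tilde K}_2}$ --- the wrong direction --- so you cannot conclude $\|Z_2 f\|_{\mathcal{H}_{\phi}} \le \|f\|_{\mathcal{H}_{\phi}}$ this way. The hypothesis $S^{min}_1\perp S^{min}_2$ is an orthogonality in the $\mathcal{H}_{\phi}$ inner product and says nothing about the minimizing decomposition relative to the $S^{\tilde K}_1$ and $S^{\tilde K}_2$ norms, which depend on a non-canonical choice of Agler kernels; ``choosing optimal decompositions that split along $S^{min}_1$ and $S^{min}_2$'' is not available. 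Likewise, positivity of $(1-z_1\bar{w}_1)\big(K_{\phi} - L_{S^{min}_2}\big)$ does not follow from positivity of $\tilde K_1$, because $\mathcal{H}_{\phi}\ominus S^{min}_2$ need not coincide, even as a set and certainly not isometrically, with $S^{\tilde K}_1$. To repair the argument you need the adjoint identity above (or an equivalent mechanism converting $\mathcal{H}_{\phi}$-orthogonality into a statement about multiplication by $Z_r$); once you have it, your choice $S_2 = S^{min}_2$, $S_1 = \mathcal{H}_{\phi}\ominus S^{min}_2$ works exactly as the paper's symmetric choice does.
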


\begin{proof} ($\Rightarrow$) Assume such an orthogonal decomposition of $\mathcal{H}_{\phi}$ exists. Using arguments similar to those in the
proof of Theorem \ref{thm2.1}, one can show that there are Agler kernels $(K_1,K_2)$ of $\phi$ with
$S^K_1 =S_1$ and $S^K_2=S_2.$ From Propositions \ref{prop3.1} and \ref{prop3.2}, each $V_r \subseteq
S_r$. As each $S_r$ is closed in $\mathcal{H}_{\phi}$, it is clear that $S^{min}_r \subseteq S_r.$ Since $S_1
\perp S_2$ in $\mathcal{H}_{\phi}$, we get $S^{min}_1 \perp S^{min}_2$ in $\mathcal{H}_{\phi}.$ \bigskip

\noindent
($\Leftarrow$) Assume $S^{min}_1 \perp S^{min}_2.$ Define $S^{max}_2:= \mathcal{H}_{\phi} \ominus S^{min}_1.$
We will show $S^{min}_1$ and $S^{max}_2$ have the desired properties. First, for a fixed $w \in \mathbb{D}^2$, write the  kernel $K_{\phi}(z,w)$ from (\ref{eqn:ker})
as $K_{\phi, w}(z)$.  Applying the backward shift $X_{1}$ to $K_{\phi, w }$ yields:
\begin{equation*}
(X_{1}K_{\phi, w})(z) = \bar{w}_1K_{\phi, w}(z) -
\overline{\phi(w)} \frac{(X_{1}\phi)(z) }{1-z_2\bar{w}_2}.\end{equation*} 
Now, we can
calculate the adjoint of $X_{1}$ in $\mathcal{H}_{\phi}$, which we denote by $X^*_{1}.$ Let
$f \in \mathcal{H}_{\phi}$ and $w \in \mathbb{D}^2$. Then
 \begin{align*} (X^*_{1}f)(w) &= \langle X_{1}^*f,
K_{\phi, w} \rangle_{\mathcal{H}_{\phi}} \\ 
&= \langle f, X_{1} K_{\phi, w}
\rangle_{\mathcal{H}_{\phi}} \\ 
 &= \langle f, \bar{w}_1 K_{\phi, w} -
\tfrac{\overline{\phi(w)}}{1-Z_2 \bar{w}_2} X_{1} \phi \rangle_{\mathcal{H}_{\phi}} \\ 
& = w_1 f(w) - \langle f, \tfrac{ X_{1} \phi}{1-Z_2 \bar{w}_2} \rangle_{\mathcal{H}_{\phi}}
\phi(w). \end{align*} 
Similarly, 
\begin{align*} (X^*_{2}f)(w) = w_2 f(w) - \langle f, \tfrac{
X_{2} \phi}{1-Z_1 \bar{w}_1} \rangle_{\mathcal{H}_{\phi}} \phi(w).\end{align*} 
Observe that \begin{equation*}\frac{
X_{2} \phi}{1-Z_1 \bar{w}_1} \in S^{min}_1 \text{ and } \frac{ X_{1}
\phi}{1-Z_2 \bar{w}_2} \in S^{min}_2, \end{equation*} for each $w \in \mathbb{D}^2.$ Then, for $f \in S^{min}_1$ and $g
\in S^{max}_2$, the orthogonality assumptions imply that
 \begin{align} \label{eqn3.4.1}
(X^*_{1}f)(z) &= z_1 f(z), \\  
 (X^{*}_{2}g)(z) &= z_2 g(z).\end{align} Let $f \in V_1.$ Then $Z_1f \in V_1$. As $S^{min}_1$ is a closed subspace of $\mathcal{H}_{\phi}$, we can use
(\ref{eqn3.4.1}) and Theorem \ref{lem3.1.a} to calculate 
\begin{align*} \| Z_1 f \|_{S^{min}_1} &=  \| Z_1 f \|_{\mathcal{H}_{\phi}} \\
& = \| X^{*}_{1} f \|_{\mathcal{H}_{\phi}} \\ 
&\le \| X_{1} \|_{\mathcal{H}_{\phi}} \| f \|_{\mathcal{H}_{\phi}} \\
& \le  \|f \|_{S^{min}_1}.\end{align*}
 Assume $\{f_n\} \rightarrow f$ in
$\mathcal{H}_{\phi}$, where $\{f_n\} \subseteq V_1.$ Then, as $\{Z_1 f_n\}$ satisfies 
\begin{equation*} \| Z_1
f_n - Z_1 f_m \|_{S^{min}_1} \le \| f_n - f_m \|_{S^{min}_1},\end{equation*} 
for $m,n \in \mathbb{N},$ the sequence $\{Z_1 f_n \}$ is Cauchy in $S^{min}_1.$
 Thus, $\{Z_1 f_n \}$ converges in
$S^{min}_1$ and in $H^2.$ As the limit in $H^2$ must be $Z_1f$,  the sequence must converges to $Z_1 f$ in $S^{min}_1$ as well, and   
\begin{equation*}\| Z_1 f \|_{S^{min}_1} \le \|f \|_{S^{min}_1}.\end{equation*} 
Thus, $Z_1 S^{min}_1
\subseteq S^{min}_1,$ and $\|M_{Z_1} \|_{S^{min}_1} \le 1.$ \bigskip

\noindent
Now consider $S^{max}_2.$ Let $g \in S^{max}_2.$ By the formula for $X^{*}_{2}$, we know
$Z_2 g = X^{*}_{2}g \in \mathcal{H}_{\phi}.$ Let 
\begin{equation*}f(z) = \sum_{m=1}^M (X^m_{2}
\phi )(z)  f_m(z_1)\end{equation*} 
be an arbitrary element in $V_1.$ It is clear that
$X_{2}f \in V_1$ as well. Then we can calculate
\begin{align*}  \langle Z_2 g, f \rangle_{\mathcal{H}_{\phi}} &=\langle X^*_{2} g, f
\rangle_{\mathcal{H}_{\phi}} \\ 
 &= \langle g , X_{2}f \rangle_{\mathcal{H}_{\phi}} \\
& = 0.\end{align*} 
As $f$ was arbitrary, $Z_2 g
\perp V_1$. It is immediate that $Z_2 g \perp S^{min}_1,$ and so $ Z_2 g \in S^{max}_2.$
Thus, $S^{max}_2$ is invariant under $M_{Z_2},$ and for $g \in S^{max}_2$, we
have 
\begin{align*}\| Z_2 g \|_{S^{max}_2}  &= \| X^*_{2} g
\|_{\mathcal{H}_{\phi}}  \\
& \le \| X_{2} \|_{\mathcal{H}_{\phi}} \| g \|_{\mathcal{H}_{\phi}}\\
&  \le  \|g \|_{S^{max}_2}.\end{align*} 
Thus, $\|M_{Z_2} \|_{S^{max}_2} \le 1,$ and the theorem is proved. \end{proof}

We will provide several examples to illustrate both the uses and limitations of Theorem
\ref{thm3.4}, but first we need an alternate definition of $\mathcal{H}_{\phi}.$ If $A: \mathcal{H}_1 \rightarrow \mathcal{H}_2$ is
a bounded operator between two Hilbert spaces, let $\mathcal{M}(A)$ denote the range of $A$ with
inner product defined by
\begin{equation*} \langle Ax, Ay \rangle_{\mathcal{M}(A)} = \langle x,y \rangle_{\mathcal{H}_1},\end{equation*} 
for all $x,y
\in \mathcal{H}_1$ orthogonal to the kernel of $A$. It is well-known and discussed at length in \cite{sar94}
that if $\phi \in H^{\infty}_1(\mathbb{D}),$ then 
\begin{equation*} \mathcal{H}_{\phi} = \mathcal{M}\big(
(1-T_{\phi}T_{\bar{\phi}})^{\frac{1}{2}}\big),\end{equation*}
 where $T_{\phi}$ is the Toeplitz operator with
symbol $\phi.$ The analysis generalizes immediately for $\phi \in H^{\infty}_1(\mathbb{D}^2)$. Define
$\mathcal{H}_{\bar{\phi}}$ to be $\mathcal{M}\big( (1-T_{\bar{\phi}}T_{\phi})^{\frac{1}{2}}\big),$ and observe
that $\mathcal{H}_{\bar{\phi}}$ is trivial for $\phi$ inner. Moreover, it follows from (I-8) in \cite{sar94}
that $f \in \mathcal{H}_{\phi}$ if and only if $T_{\bar{\phi}}f \in \mathcal{H}_{\bar{\phi}},$ and 
 for all  $f,g \in \mathcal{H}_{\phi},$
\begin{equation*} \langle f,g
\rangle_{\mathcal{H}_{\phi}} = \langle f,g \rangle_{H^2} + \langle T_{\bar{\phi}}f, T_{\bar{\phi}}g
\rangle_{\mathcal{H}_{\bar{\phi}}}.\end{equation*}

\begin{ex} Let $\phi$ be inner and consider $\psi:=t \phi$, where $0<t<1.$ Then, the $V_1$ and
$V_2$ spaces for $\phi$ and $\psi$ are identical. Let $f_r \in V_r$ for $r=1,2.$ As
$T_{\bar{\phi}}f_r =0,$ we have $T_{\bar{\psi}}f_r =0$ for each $r$. By our previous results about
inner functions, \begin{align*} \langle f_1,f_2 \rangle_{H^2} = \langle f_1, f_2 \rangle_{\mathcal{H}_{\phi}} =0, \end{align*}
which immediately implies 
\begin{equation*} \langle f_1, f_2 \rangle_{\mathcal{H}_{\psi}} = \langle f_1,f_2 \rangle_{H^2} +
\langle T_{\bar{\psi}}f_1, T_{\bar{\psi}}f_2 \rangle_{\mathcal{H}_{\bar{\psi}}} = 0.\end{equation*} 
Since $V_1 \perp V_2$
in $\mathcal{H}_{\psi},$ we get $S^{min}_1 \perp S^{min}_2$ in $\mathcal{H}_{\psi}.$ Theorem \ref{thm3.4} then implies
that there is an orthogonal decomposition of $\mathcal{H}_{\psi}$ yielding an Agler decomposition of $\psi.$
\end{ex}

Not all examples arise from inner functions or one-variable functions.

\begin{ex} Consider $\phi(z) =\tfrac{1}{2} (z_1 + z_1z_2).$ Then, we  can calculate 
\begin{align*} V_1
&= \{ z_1f(z_1): f\in H^2(\mathbb{D}) \} \\
V_2 &= \{
(1+z_2)f(z_2): f \in H^2(\mathbb{D})\}.\end{align*} 
Moreover, for every $f \in V_2$, we have 
\begin{equation*}
T_{\bar{\phi}}f = P_{H^2} \big( \tfrac{1}{2}\bar{z}_1(1+\bar{z}_2)f(z_2) \big)
=0.\end{equation*} As $V_1 \perp V_2$ in $H^2$, for any $f_1 \in V_1$, $f_2 \in V_2$, we
have
 \begin{equation*} \langle f_1, f_2 \rangle_{\mathcal{H}_{\phi}} = \langle f_1,f_2 \rangle_{H^2} + \langle
T_{\bar{\phi}}f_1, T_{\bar{\phi}}f_2 \rangle_{\mathcal{H}_{\bar{\phi}}} = 0.\end{equation*} 
Thus, $V_1 \perp V_2$ in
$\mathcal{H}_{\phi}$ and so $S^{min}_1 \perp S^{min}_2$ in $\mathcal{H}_{\phi}.$ This same argument holds for any $\phi$
such that $V_1 \perp V_2$ in $H^2,$ and $T_{\bar{\phi}}V_r =\{0\}$ for $r=1$ or $r=2.$ \end{ex}

It is also quite easy to find functions for which the assumptions of Theorem \ref{thm3.4} fail. 

\begin{ex} Set $\phi(z) = \frac{1}{2}(z_1 + z_2).$ 
Then, for $r=1,2,$ the set $V_r$ contains precisely the functions in $H^2_{r}.$ 
As $1 \in V_1 \cap V_2,$ we cannot have
$V_1 \perp V_2$ in $\mathcal{H}_{\phi},$ since
\begin{equation*} \|1 \|^2_{\mathcal{H}_{\phi}}  = \|1 \|^2_{H^2} + \|T_{\bar{\phi}} 1 \|^2_{\mathcal{H}_{\bar{\phi}}} =1. \end{equation*} \end{ex}
 
\section{Rational Inner Functions} 

We first review the structure of rational inner functions on the bidisk.

\begin{defn} A set $X \subseteq \mathbb{C}^2$ is \emph{determining} for an algebraic set $A\subseteq \mathbb{C}^2$ if
$f \equiv 0$ whenever $f$ is holomorphic on $A$ and $f|_{X \cap A} =0$. A polynomial $p$ is
\emph{atoral} if $\mathbb{T}^2$ is not determining for any of the irreducible components of the zero set
of $p$. \end{defn}

For more information about determining sets and toral/atoral polynomials see \cite{ams06}. Let $p$
be a polynomial on $\mathbb{C}^2$ with $\deg_{r}p=j_r$ for $r=1,2$, and define its reflection $\tilde{p}$
as 
\begin{equation*}\tilde{p}(z) := z_1^{j_1}z_2^{j_2} \overline{ p\big(
\tfrac{1}{\bar{z}} \big) }.\end{equation*}

\begin{rem} \label{rem4.1} Let $\phi \in H^{\infty}_1$ be rational inner. By the atoral-toral
factorization of Agler-McCarthy-Stankus in \cite{ams06}, there are unique (up to a unimodular
scalar) functions $m$ and $p$ such that  
\begin{align} \label{eqn4.1} \phi(z) = m(z)
\frac{\tilde{p}(z)}{p(z)},\end{align} 
$m$ is a monomial, and $p$ is an atoral polynomial
with no zeros in $\mathbb{D}^2$ and finitely many zeros on $\mathbb{T}^2.$ Then, $\deg\phi = (k_1,k_2),$ where $ k_r = \deg_{r} m +
\deg_{r} p$ for $r=1,2.$  Also, every function of form $(\ref{eqn4.1})$ is rational inner. \end{rem}

Recall that $S^{max}_r$ can be viewed equivalently as
a space of holomorphic functions on $\mathbb{D}^2$ contained in $H^2$ and a space of $L^2$ functions contained in $(\ref{eqn:H2})$. 
Then, the following result about $S^{max}_r$ for $r=1,2$ can be viewed as both a statement about the analytic functions and a 
statement about their boundary values.

\begin{lem} \label{lem4.1} Let $\phi \in H^{\infty}_1$ be rational inner with representation $(\ref{eqn4.1}).$ Then
 \begin{align*}
S^{max}_1 &\subseteq \Big \{ \tfrac{f}{p} \in H^2: f \in H^2 \text{ and } \hat{f}(n_1,n_2) =0 \text{ for } n_2 \ge k_2 
\Big \}, \\ 
S^{max}_2 &\subseteq \Big \{ \tfrac{f}{p} \in
H^2 :f \in H^2 \text{ and } \hat{f}(n_1,n_2) =0 \text{ for } n_1 \ge k_1  \Big
\}. \end{align*} 
\end{lem}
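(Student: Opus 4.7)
The plan is to combine the characterization of $S^{max}_1$ from Lemma \ref{lem2.1} with the atoral-toral factorization of $\phi$ in Remark \ref{rem4.1}, and then extract the desired bound on the $z_2$-Fourier support by a direct degree count.

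Concretely, I would start with an arbitrary $g \in S^{max}_1$. By Lemma \ref{lem2.1}, $g \in H^2 \cap \phi L^2_{*-}$, so there exists $h \in L^2_{*-}$, i.e.\ with $\hat{h}(n_1,n_2) = 0$ whenever $n_2 \geq 0$, such that $g = \phi h$. Using the factorization $\phi = m\tilde{p}/p$, I would multiply through to obtain
\begin{equation*}
gp = m\tilde{p}\,h.
\end{equation*}
Setting $f := gp$, the right-hand side makes sense as an $L^2(\mathbb{T}^2)$-product (polynomial times $L^2$-function), while the left-hand side lies in $H^2$ since $g \in H^2$ and $p$ is a polynomial. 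So $f \in H^2$ and $g = f/p$, establishing the form claimed in the lemma.

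The key step is then to verify the Fourier support condition $\hat{f}(n_1,n_2) = 0$ for $n_2 \geq k_2$. For this I would use that $m\tilde{p}$ is a polynomial with $\deg_2(m\tilde{p}) = \deg_2 m + \deg_2 \tilde{p} = \deg_2 m + \deg_2 p = k_2$, so it has an expansion $m\tilde{p} = \sum_{a_1 \in \mathbb{Z}, 0 \leq a_2 \leq k_2} c_{a_1,a_2} z_1^{a_1} z_2^{a_2}$. Convolving with $h$ gives
\begin{equation*}
\hat{f}(n_1,n_2) = \sum_{a_1,\, 0 \leq a_2 \leq k_2} c_{a_1,a_2}\, \hat{h}(n_1-a_1,\, n_2-a_2),
\end{equation*}
and when $n_2 \geq k_2$ we have $n_2 - a_2 \geq 0$ for every allowed $a_2$, so every term vanishes by the support condition on $h$. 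This yields $\hat{f}(n_1,n_2) = 0$ for $n_2 \geq k_2$, and the inclusion for $S^{max}_1$ follows. The inclusion for $S^{max}_2$ is obtained by swapping the roles of $z_1$ and $z_2$.

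I do not anticipate a serious obstacle: the only mild subtlety is checking that $\deg_2(m\tilde{p}) = k_2$ (which comes from the definition $k_r = \deg_r m + \deg_r p$ together with $\deg_r \tilde{p} = \deg_r p$), and confirming that the product $gp$ can be legitimately identified on the $L^2$ side with $m\tilde{p}\,h$ so that the Fourier-coefficient convolution argument applies. Both are routine given the setup in Remark \ref{rem4.1} and the identification of $H^2$ with its boundary-value realization.
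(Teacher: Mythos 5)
Your proposal is correct and follows essentially the same route as the paper: invoke Lemma \ref{lem2.1} to write $g=\phi h$ with $h\in L^2_{*-}$, set $f:=pg=m\tilde{p}h\in H^2$, and use $\deg_2(m\tilde p)\le k_2$ to kill the Fourier coefficients with $n_2\ge k_2$. The only difference is that you spell out, via the convolution formula, the step the paper dismisses with ``it can be shown,'' which is a welcome addition.
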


\begin{proof} Let $g \in S^{max}_1.$ By Lemma \ref{lem2.1}, there is an $h \in L^2_{*-}$ such that $g =
\tfrac{m \tilde{p}}{p}h.$ Then
\begin{equation*} m\tilde{p}h = pg \in H^2. \end{equation*} 
Since $h \in
L^2_{*-}$ and $\deg_{2}(m \tilde{p}) =
k_2,$ if we set $f:=m\tilde{p}h$, it can be shown that $\hat{f}(n_1,n_2) =0$ 
whenever $n_2 \ge k_2.$ The result follows similarly for $S^{max}_2.$ \end{proof}

The following finiteness result is due to Cole-Wermer \cite[Corollary 2.2]{colwer99},
and the specific dimension bounds were shown by Knese in \cite[Theorem 2.10]{kn10a}. In
\cite{bsv05}, Ball-Sadosky-Vinnikov gave an alternate proof of the Cole-Wermer result for a subset of the
Agler kernels of $\phi$.

We use the $S^{max}_r$ subspaces to provide a very simple proof of the Cole-Wermer
result, which is distinct from the arguments in \cite{bsv05}. In this proof, 
we index Taylor coefficients by $m,n$ instead of $n_1,n_2$ to simplify notation.

\begin{thm} \label{thm4.1} Let $\phi \in H^{\infty}_1$ be rational inner with representation $(\ref{eqn4.1}),$ and let $(K_1,
K_2)$ be Agler kernels of $\phi$. Then, 
\begin{equation*} \text{dim} (\mathcal{H}(K_1)) \le k_2(k_1+1) \text{ and } \text{dim}(\mathcal{H}(K_2)) \le
k_1(k_2+1) . \end{equation*}
 Setting $m_1:= \text{dim}(\mathcal{H}(K_1))$ and $m_2:= \text{dim} (\mathcal{H}(K_2))$, we can write
\begin{equation*}K_1(z, w) = \frac{1}{p(z) \overline{p(w)}} \sum_{i=1}^{m_1}q_i(z)
\overline{q_i(w)}  \text{ and }  K_2(z, w)= \frac{1}{p(z)
\overline{p(w)}} \sum_{j=1}^{m_2} r_j(z) \overline{r_j(w)}, \end{equation*} 
for polynomials $\{q_i \}$ with $\deg q_i \le (k_1,k_2-1)$ for $1 \le i \le m_1$, and polynomials $\{r_j\}$ with $\deg r_j \le (k_1-1, k_2)$  for 
$1 \le j \le m_2.$ \end{thm}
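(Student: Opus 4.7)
The plan is to show that the ``scaled'' kernel $\widetilde{K}_1(z,w) := p(z)\overline{p(w)}\,K_1(z,w)$ is a polynomial of bi-degree at most $(k_1, k_2-1)$ in $z$ and in $\bar w$. Once this is established, Moore--Aronszajn together with division by $p(z)\overline{p(w)}$ yields both the dimension bound and the claimed form of $K_1$; the argument for $K_2$ is entirely symmetric.

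Step one is to prove that every $g \in \mathcal{H}(K_1)$ has the form $g = f/p$ with $\hat{f}(n_1,n_2) = 0$ for $n_2 \ge k_2$, via the chain $\mathcal{H}(K_1) \subseteq S^K_1 \subseteq S^{max}_1$ followed by Lemma \ref{lem4.1}. The first containment is contractive by Corollary \ref{corA.1}, since $K_1/(1-z_1\bar w_1) - K_1 = K_1 \cdot z_1\bar w_1/(1-z_1\bar w_1)$ is a product of positive kernels; the second holds because Definition \ref{rem3.1} places $S^K_1$ contractively inside $\mathcal{H}_\phi$ with $Z_1 S^K_1 \subseteq S^K_1$, so $S^K_1 \subseteq S^{max}_1$ by maximality. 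Applying this to the reproducing elements $K_1(\cdot, w) \in \mathcal{H}(K_1)$ shows that $p(z) K_1(z,w)$ is polynomial in $z_2$ of degree at most $k_2-1$ for each fixed $w$, so $\widetilde{K}_1$ is polynomial in $z_2$ of degree at most $k_2-1$. The analogous argument for $K_2$ gives that $\widetilde{K}_2$ is polynomial in $z_1$ of degree at most $k_1-1$.

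Step two is to bound the remaining $z_1$-degree of $\widetilde{K}_1$. Multiplying the Agler identity by $p(z)\overline{p(w)}$ and using $\phi = m\tilde p/p$ yields
\[
p(z)\overline{p(w)} - m(z)\overline{m(w)}\,\tilde p(z)\overline{\tilde p(w)} = (1-z_1\bar w_1)\,\widetilde{K}_2 + (1-z_2\bar w_2)\,\widetilde{K}_1.
\]
The left-hand side is polynomial in $z$ of bi-degree at most $(k_1, k_2)$, and $(1-z_1\bar w_1)\widetilde{K}_2$ is polynomial in $z_1$ of degree at most $k_1$ by step one. Hence $(1-z_2\bar w_2)\widetilde{K}_1$ has $z_1$-degree at most $k_1$; since $1/(1-z_2\bar w_2)$ expands as a $z_2$-power series with scalar coefficients, a short induction on the $z_2$-Taylor coefficients of $\widetilde{K}_1$ transfers the $z_1$-degree bound directly to $\widetilde{K}_1$. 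Combined with step one, $\widetilde{K}_1$ is polynomial in $z$ of bi-degree at most $(k_1, k_2-1)$, and Hermitian symmetry gives the same bound in $\bar w$.

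Finally, $\widetilde{K}_1$ is a positive kernel whose sections $\widetilde{K}_1(\cdot, w)$ all lie in the $(k_1+1)k_2$-dimensional space of polynomials of bi-degree at most $(k_1, k_2-1)$; Moore--Aronszajn therefore produces linearly independent polynomials $q_1, \ldots, q_{m_1}$ of the same degree bound with $\widetilde{K}_1 = \sum_{i=1}^{m_1} q_i(z)\overline{q_i(w)}$ and $m_1 \le k_2(k_1+1)$. Dividing by $p(z)\overline{p(w)}$ yields the claimed decomposition of $K_1$, and since $p$ is nonvanishing on $\mathbb{D}^2$ the functions $\{q_i/p\}$ are a basis of $\mathcal{H}(K_1)$, so $\dim \mathcal{H}(K_1) = m_1 \le k_2(k_1+1)$. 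The main technical obstacle is the Taylor-expansion bookkeeping in step two that propagates the $z_1$-degree bound through the factor $(1-z_2\bar w_2)$; the rest of the argument is a direct application of the set-inclusions and positivity manipulations already developed earlier in the paper.
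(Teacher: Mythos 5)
Your proposal is correct and follows essentially the same route as the paper: the sections $K_r(\cdot,w)$ are placed in $S^{max}_r$ via $\mathcal{H}(K_r)\subseteq S^K_r\subseteq S^{max}_r$ and Lemma \ref{lem4.1} to bound one degree, the other degree bound comes from clearing denominators in the Agler identity and comparing degrees, and the sum-of-squares form follows from an orthonormal basis expansion (Theorem \ref{thmA.1}). Your geometric-series induction merely makes explicit the ``examination of degrees'' step that the paper states tersely, so there is no substantive difference in approach.
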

 
\begin{proof} Let $(K_1,K_2)$ be Agler kernels of $\phi$. For a fixed $w \in \mathbb{D}^2$ and $r=1, 2$, it follows from basic properties of
reproducing kernel Hilbert spaces that $K_r(\cdot,w) \in S^K_r \subseteq S^{max}_r$. By  Lemma \ref{lem4.1}, we can write 
\begin{align} K_1(z, w) &= \dfrac{1}{p(z)} \sum_{\substack{m \ge 0 \\ 0 \le n <k_2}} a_{mn}(
w) z_1^{m} z_2^{n}, \label{eqn4.3}\\ K_2(z, w) &= \dfrac{1}{p(z)}
\sum_{\substack{0 \le m < k_1 \\ n \ge 0}} b_{mn}( w) z_1^{m} z_2^{n}, \label{eqn4.4}
\end{align} 
for $z \in \mathbb{D}^2$ and coefficients $a_{mn}(w)$ and $b_{mn}(w)$ in $l^2(\mathbb{N}^2).$ Substituting
(\ref{eqn4.1}), (\ref{eqn4.3}), and (\ref{eqn4.4}) into (\ref{eqn3.1}) and canceling the denominator
$p(z)$ yields 
\begin{align*} p(z) &- \overline{\phi(w)}(m\tilde{p})(z) \\
& =
(1-z_1\bar{w}_1) \sum_{\substack{0 \le m < k_1 \\ n \ge 0}}b_{mn}( w) z_1^{m}
z_2^{n} + (1-z_2\bar{w}_2) \sum_{\substack{m \ge 0 \\ 0 \le n <k_2}} a_{mn}( w)
z_1^{m} z_2^{n}.\end{align*} 
An examination of the degrees of the terms in the above equation shows that 
\begin{align*} K_1(z, w)& = \dfrac{1}{p(z)}
\sum_{\substack{ 0 \le m \le k_1 \\ 0 \le n <k_2}} a_{mn}( w) z_1^{m} z_2^{n}, \\ 
K_2(z, w)
&= \dfrac{1}{p(z)} \sum_{\substack{ 0 \le m < k_1 \\ 0 \le n  \le k_2}} b_{mn}( w) z_1^{m}
z_2^{n}. \end{align*}
By the canonical construction of $\mathcal{H}(K_1)$ from $K_1$, the linear span of the set of functions
$\{K_1(\cdot, w)\}_{w \in \mathbb{D}^2}$ is dense in $\mathcal{H}(K_1).$ Fix $g \in \mathcal{H}(K_1),$ and
let $\{f_n/ p \}$ be a sequence with elements in the linear span of $\{K_1(\cdot, w)\}_{w \in
\mathbb{D}^2}$ that converges to $g$. Then for each $n$, $\deg f_n \le (k_1, k_2-1).$ 
As $\{f_n/ p \}$ also converges to $g$ in $H^2,$ it follows that 
$\{f_n\}$ converges to $gp$ in $H^2$ and that $\deg gp \le (k_1, k_2-1).$ As $g= gp / p,$ it follows that 
\begin{equation*}\mathcal{H}(K_1) \subseteq \Big \{ \tfrac{f}{p}: f(z) = \sum_{\substack{0 \le m \le k_1 \\ 0 \le n <k_2}}c_{mn} z_1^{m} z_2^{n} \Big \}, \text{ and }
\text{dim}(\mathcal{H}(K_1)) \le k_2(k_1+1).\end{equation*} 
Let $m_1= \text{dim} (\mathcal{H}(K_1)),$ and let $\{f_i\}_{i=1}^{m_1}$ be an
orthonormal basis for $\mathcal{H}(K_1)$. For each $i$, we have $f_i$ = $\tfrac{q_i}{  p},$ where $\deg q_i
\le (k_1,k_2-1 ).$ By Theorem \ref{thmA.1}, 
\begin{align*} K_1 (z,w) = \frac{1}{p(z)
\overline{p(w)}} \sum_{i=1}^{m_1}q_i(z) \overline{q_i(w)}. \end{align*} 
An analogous argument
holds for $\mathcal{H}(K_2).$ \end{proof}

Given a rational inner $\phi$ with $\deg \phi = (k_1,k_2),$ one can actually choose $(K_1,K_2)$ so
that $\text{dim}(\mathcal{H}(K_1)) =k_2$ and $\text{dim}(\mathcal{H}(K_2)) =k_1$. Such decompositions are discussed by
Kummert in \cite{kum89a} and Knese in \cite{kn08ub}.

We now restrict attention to rational inner functions continuous on $\overline{\mathbb{D}^2}$. The following
results about $S^{max}_1$ and $S^{max}_2$ are proven by Ball-Sadosky-Vinnikov in
\cite[Proposition 6.9]{bsv05}. We also consider a related result for $\mathcal{H}_{\phi},$ which simplifies the proofs
for $S^{max}_1$ and $S^{max}_2$.

\begin{prop} \label{prop4.1} Let $\phi \in H^{\infty}_1$ be rational inner and continuous on $\overline{\mathbb{D}^2}$ with
representation $(\ref{eqn4.1})$. Then 
\begin{align*} \mathcal{H}_{\phi} &=\Big \{ \tfrac{f}{p} : f \in H^2 \text{ and } \hat{f}(n_1,n_2) =0 \text{ if } n_1 \ge k_1 \text{ and } n_2 \ge k_2 \Big \},\\ 
S^{max}_1 & =  \Big \{ \tfrac{f}{p} :f \in H^2 \text{ and } \hat{f}(n_1,n_2) =0 \text{ if }  n_2 \ge k_2 \Big \}, \\
S^{max}_2 & =  \Big \{ \tfrac{f}{p} : f \in H^2 \text{ and } \hat{f}(n_1,n_2) =0 \text{ if } n_1\ge k_1 \Big \}. \end{align*} 
\end{prop}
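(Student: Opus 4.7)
The strategy is to prove each of the three equalities by two inclusions. One direction, $S^{max}_r \subseteq \{f/p : \hat f(n_1,n_2) = 0 \text{ for } n_r \geq k_r\}$, is already Lemma~\ref{lem4.1}. The analogous direction for $\mathcal{H}_\phi$ follows the same pattern: by (\ref{eqn2.1}), any $g \in \mathcal{H}_\phi$ can be written as $g = \phi h = (m\tilde p)h/p$ with $h \in L^2_{*-} \oplus L^2_{-+}$, so $f := pg = (m\tilde p)h \in H^2$. Because $m\tilde p$ is a polynomial of bidegree $(k_1,k_2)$ and $\hat h$ vanishes on $\mathbb{N}^2$, the Fourier-convolution formula gives $\hat f(n_1,n_2) = 0$ whenever $n_1 \geq k_1$ and $n_2 \geq k_2$.

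For the reverse inclusions, the key boundary identity is $m\tilde p = z_1^{k_1} z_2^{k_2}\bar p$ a.e.\ on $\mathbb{T}^2$ (a direct consequence of the definition of $\tilde p$ and the fact that $\deg_r(m\tilde p) = k_r$), whence $\bar\phi = \bar z_1^{k_1}\bar z_2^{k_2}\,p/\bar p$ on $\mathbb{T}^2$. Continuity of $\phi$ on $\overline{\mathbb{D}^2}$ forces $p$ to be zero-free on $\overline{\mathbb{D}^2}$ (otherwise $\phi$ would have a boundary singularity, contradicting the atoral decomposition of Remark~\ref{rem4.1}), so $1/p \in H^\infty$ and $1/\bar p$ has Fourier coefficients supported in $-\mathbb{N}^2$. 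Consequently $\bar z_1^{k_1}\bar z_2^{k_2}/\bar p$ has Fourier support in $\{(n_1,n_2) : n_1 \leq -k_1,\ n_2 \leq -k_2\}$.

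Now take $g = f/p \in H^2$ satisfying the hypothesized restriction on $\hat f$. On $\mathbb{T}^2$ we have $g\bar\phi = f\bar z_1^{k_1}\bar z_2^{k_2}/\bar p$, and by convolution $\widehat{g\bar\phi}(n_1,n_2)$ is a sum of terms $\hat f(m_1,m_2)\widehat{\bar z_1^{k_1}\bar z_2^{k_2}/\bar p}(n_1-m_1,n_2-m_2)$ over $(m_1,m_2) \in \mathbb{N}^2$ with $m_1 \geq n_1+k_1$ and $m_2 \geq n_2+k_2$. For $\mathcal{H}_\phi$ we want $\widehat{g\bar\phi}(n_1,n_2) = 0$ on $\mathbb{N}^2$; there, $m_1 \geq k_1$ and $m_2 \geq k_2$, so $\hat f(m_1,m_2) = 0$ by hypothesis. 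For $S^{max}_1$ we want $\widehat{g\bar\phi}(n_1,n_2) = 0$ whenever $n_2 \geq 0$; there, $m_2 \geq k_2$, and the one-sided hypothesis $\hat f(m_1,m_2) = 0$ for $m_2 \geq k_2$ finishes this; the $S^{max}_2$ case is symmetric. Combining with $g \in H^2$ and Lemma~\ref{lem2.1} (or (\ref{eqn2.1})) places $g$ in the desired space. The sole non-routine ingredient is the auxiliary fact that continuity of $\phi$ on $\overline{\mathbb{D}^2}$ makes $p$ zero-free there; the remainder is Fourier-support bookkeeping.
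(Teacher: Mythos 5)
Your proof is correct and follows essentially the same route as the paper: both arguments hinge on the observation that continuity of $\phi$ on $\overline{\mathbb{D}^2}$ forces $1/p \in H^{\infty}$, so that multiplication by $\bar{p}$ and $1/\bar{p}$ preserves the relevant Fourier supports on $\mathbb{T}^2$ (the paper packages this as the set identity $q[L^2 \ominus H^2] = L^2 \ominus H^2$ for $q \equiv \bar{p}$ on $\mathbb{T}^2$, while you carry it out coefficient-by-coefficient). The only stylistic difference is in the reverse inclusions for $S^{max}_r$: where you verify membership in $H^2 \cap \phi L^2_{*-}$ by convolution bookkeeping, the paper gets this for free from the definition of $S^{max}_1$ as the \emph{maximal} $M_{Z_1}$-invariant subspace, since the candidate set is visibly $M_{Z_1}$-invariant and sits inside $\mathcal{H}_{\phi}$ by the first identity together with Lemma \ref{lem4.1}.
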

 \begin{proof} Because $\phi$ is continuous on
$\overline{\mathbb{D}^2}$, we have $p, \frac{1}{p} \in H^{\infty}$ and so, $\frac{1}{p} H^2 =H^2$. Set
 \begin{equation*}q(z):= \overline{ p\big(
\tfrac{1}{\bar{z}} \big) }.\end{equation*} 
It is clear that $q, \frac{1}{q} \in
L^{\infty}$ and so, these functions multiply $L^2$ into $L^2.$ Let $f \in H^2$ and $g \in L^2 \ominus H^2$. As $q \equiv \bar{p}$ on $\mathbb{T}^2$, we have
\begin{align*} \langle qg, f \rangle_{L^2} &= \langle g, pf \rangle_{L^2}=0, \\
 \langle \tfrac{1}{q}g, f \rangle_{L^2} &= \langle g, \tfrac{1}{p} f \rangle_{L^2}=0.
 \end{align*}
 Then, it is immediate that 
\begin{equation*}q [ L^2 \ominus H^2]\subseteq L^2 \ominus H^2 \text{ and }
\tfrac{1}{q}[ L^2 \ominus H^2] \subseteq L^2 \ominus H^2.\end{equation*}
 Thus, $q [ L^2 \ominus H^2]=L^2 \ominus
H^2 .$ By the characterization of $\mathcal{H}_{\phi}$ in Remark \ref{rem2.1.b}, we have
 \begin{align*} \mathcal{H}_{\phi} &=
 \phi  [ L^2 \ominus H^2] \cap H^2 \\ &= \tfrac{1}{p} \Big [ m \tilde{p} [L^2 \ominus
H^2] \cap H^2 \Big ] \\ &= \tfrac{1}{p} \Big [ z_1^{k_1} z_2^{k_2} q [L^2 \ominus H^2]
\cap H^2 \Big ]\\ &= \tfrac{1}{p} \Big [ z_1^{k_1} z_2^{k_2} [L^2 \ominus H^2] \cap H^2
\Big ]\\ &= \Big \{ \tfrac{f}{p} : f \in H^2 \text{ and } \hat{f}(n_1,n_2) =0 \text{ if } n_1\ge k_1 \text{ and } n_2 \ge k_2 \Big \}, \end{align*} 
as desired. We now prove the result for $S^{max}_1$. Set 
\begin{equation*}S_1 :=  \Big \{ \tfrac{f}{p} :f \in H^2 \text{ and } \hat{f}(n_1,n_2) =0 \text{ if }  n_2 \ge k_2 \Big \}.\end{equation*} 
From Lemma \ref{lem4.1}, we know $S^{max}_1 \subseteq S_1.$ Moreover, $S_1$ is
invariant under $M_{Z_1}$ and by the characterization of $\mathcal{H}_{\phi}$, we have $S_1 \subseteq
\mathcal{H}_{\phi}$. By the definition of $S^{max}_1,$ we have $S_1 \subseteq S^{max}_1$ and so, the two sets
are equal. The result follows similarly for $S^{max}_2.$\end{proof}

The following corollary is a special case of Theorem \ref{thm2.2}. This result was originally proved by
Knese in \cite[Corollary 1.16]{kn08ub}.

\begin{cor} \label{cor4.2} Let $\phi \in H^{\infty}_1$ be rational inner and continuous on $\overline{\mathbb{D}^2}$ with
representation $(\ref{eqn4.1})$. Then $\phi$ has a unique Agler decomposition if and only if $\phi$
is a function of one variable. \end{cor}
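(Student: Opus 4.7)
The plan is to reduce Corollary \ref{cor4.2} to Theorem \ref{thm2.2} and then to characterize $\phi L^2_{--} \cap H^2$ by reusing the machinery developed in the proof of Proposition \ref{prop4.1}. By Theorem \ref{thm2.2}, $\phi$ has a unique Agler decomposition if and only if $\phi L^2_{--} \cap H^2 = \{0\}$, so the goal becomes showing that this intersection is trivial precisely when $\phi$ depends on at most one of the coordinate variables.

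For the easy direction, suppose $\phi(z) = \phi(z_1)$. Multiplication by $\phi$ is an isometry on $L^2(\mathbb{T}^2)$ that preserves the $z_2$-Fourier support of any function, since $\phi$ has no negative powers of $z_2$ and commutes with multiplication by $\bar{z}_2$. Every $h \in L^2_{--}$ has $\hat{h}(n_1,n_2)=0$ unless $n_2 \le -1$, so the same is true of $\phi h$, which therefore cannot lie in $H^2$ unless it is zero.

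For the converse, write $\phi = m \tilde{p}/p$ with $\deg \phi = (k_1,k_2)$ and suppose both $k_r \ge 1$. I would mimic the proof of Proposition \ref{prop4.1}: set $q(z) := \overline{p(1/\bar{z})}$, note $q \equiv \bar{p}$ on $\mathbb{T}^2$, and observe that $q$ and $1/q$ both lie in $L^\infty$ with Fourier support inside $\{n_1 \le 0, n_2 \le 0\}$. From this I verify the analog of the key step of Proposition \ref{prop4.1}, namely $q L^2_{--} = L^2_{--}$, by the same adjoint/orthogonality computation with $L^2 \ominus H^2$ replaced by $L^2_{--}$. Writing $m\tilde{p} = z_1^{k_1} z_2^{k_2} q$ (up to a unimodular scalar), this yields
\[
\phi L^2_{--} \cap H^2 = \tfrac{1}{p} \big[ z_1^{k_1} z_2^{k_2} \, q L^2_{--} \big] \cap H^2 = \tfrac{1}{p} \big[ z_1^{k_1} z_2^{k_2} L^2_{--} \big] \cap H^2 = \Big\{ \tfrac{f}{p} : f \in H^2, \ \hat{f}(n_1,n_2) = 0 \text{ unless } n_1 < k_1 \text{ and } n_2 < k_2 \Big\}.
\]
When both $k_1, k_2 \ge 1$, this space contains $1/p$ (take $f \equiv 1$) and is therefore nonzero, so Theorem \ref{thm2.2} gives non-uniqueness.

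The main obstacle is verifying $q L^2_{--} = L^2_{--}$ cleanly; once this identity is available, the displayed computation is just Fourier-support bookkeeping parallel to what already appears in Proposition \ref{prop4.1}. A minor additional check is that $1/p \in H^\infty$ indeed lies in the displayed space and is genuinely nonzero, which is immediate from the hypothesis that $p$ has no zeros on $\overline{\mathbb{D}^2}$.
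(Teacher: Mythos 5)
Your proposal is correct and follows essentially the same route as the paper: both reduce to Theorem \ref{thm2.2} and then identify $\phi L^2_{--}\cap H^2$ with $\bigl\{ f/p : f\in H^2,\ \hat f(n_1,n_2)=0 \text{ unless } n_1<k_1 \text{ and } n_2<k_2 \bigr\}$ via the representation $m\tilde p = z_1^{k_1}z_2^{k_2}q$ and the invariance of the relevant $L^2$ subspace under $q$ and $1/q$. The only difference is organizational — the paper obtains this set in one line by intersecting the characterizations of $S^{max}_1$ and $S^{max}_2$ already proved in Proposition \ref{prop4.1} (using $S^{max}_1\cap S^{max}_2 = H^2\cap\phi L^2_{--}$ from Lemma \ref{lem2.1}), whereas you rerun the same computation directly on $L^2_{--}$.
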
 
\begin{proof} By Proposition \ref{prop4.1},
 \begin{align} \label{eqn4.6}
S^{max}_1 \cap S^{max}_2 = \Big \{ \tfrac{f}{p} :f \in H^2 \text{ and } \hat{f}(n_1,n_2) =0 \text{ if } n_1 \ge k_1  \text{ or }  n_2 \ge k_2 \Big \}. \end{align}
 As $S^{max}_1 \cap S^{max}_2 = H^2 \cap
\phi L^2_{--},$ it follows from Theorem \ref{thm2.2} that $\phi$ has a unique Agler decomposition
if and only if $(\ref{eqn4.6}) = \{0\},$ which occurs if and only if $k_1$ or $k_2$ is zero. \end{proof}

Corollary \ref{cor4.2} does not hold for general rational inner functions. Rather, we can construct
rational inner functions with arbitrarily high degree and unique Agler decompositions.

\begin{prop} \label{thm4.2} Let $(k_1,k_2) \in \mathbb{N}^2.$ Then there exists a rational inner function $\phi$
such that $\deg \phi = (k_1,k_2),$ and $\phi$ has a unique Agler decomposition. \end{prop}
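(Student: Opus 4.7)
The plan is to exhibit, for each $(k_1,k_2) \in \mathbb{N}^2$, an explicit rational inner function of bidegree $(k_1,k_2)$ satisfying the uniqueness criterion $\phi L^2_{--} \cap H^2 = \{0\}$ of Theorem \ref{thm2.2}. When $k_1=0$ or $k_2=0$, take $\phi$ to depend only on the remaining variable (e.g.\ $\phi(z)=z_2^{k_2}$); by Corollary \ref{cor4.2}, any such one-variable function has unique Agler decomposition. Assuming $k_1,k_2\geq 1$, consider
\[
\phi(z) \;=\; \frac{\tilde p(z)}{p(z)}, \qquad p(z)=2 - z_1^{k_1} - z_2^{k_2}, \qquad \tilde p(z)=2 z_1^{k_1} z_2^{k_2} - z_1^{k_1} - z_2^{k_2},
\]
obtained by substituting $(z_1^{k_1},z_2^{k_2})$ into the classical singular bidegree-$(1,1)$ rational inner function $(2w_1w_2-w_1-w_2)/(2-w_1-w_2)$.

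I would first verify that $\phi$ is rational inner of bidegree exactly $(k_1,k_2)$. The triangle inequality shows $p$ is nonzero on $\mathbb{D}^2$, while its zero set on $\mathbb{T}^2$ is the finite set $Z=\{(\omega_1,\omega_2):\omega_r^{k_r}=1\}$ of cardinality $k_1k_2$, so $p$ is atoral. The identity $p + \tilde p = 2(z_1^{k_1}-1)(z_2^{k_2}-1)$ combined with the observation that no linear factor of $z_r^{k_r}-1$ divides $p$ (since $p(\omega,z_2) = 1 - z_2^{k_2}$ and $p(z_1,\omega') = 1 - z_1^{k_1}$ are not identically zero for $\omega^{k_1}=1$ or $(\omega')^{k_2}=1$) shows $p$ and $\tilde p$ are coprime. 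Then Remark \ref{rem4.1} gives $\deg\phi=(k_1,k_2)$ and confirms $\phi$ is rational inner.

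To prove uniqueness, suppose $g\in L^2_{--}$ with $f:=\phi g\in H^2$. Since $\phi$ is inner, $\bar\phi=p/\tilde p$ a.e.\ on $\mathbb{T}^2$, so $\tilde p\, g = p\, f$ in $L^2$. Comparing Fourier supports, the left side is supported in $\{n_1<k_1,\,n_2<k_2\}$ (each of the three monomials of $\tilde p$ shifts the support of $g$ by at most $(k_1,k_2)$), while $pf\in H^2$. The intersection is the finite box $\{0\leq n_1<k_1,\,0\leq n_2<k_2\}$, so $\tilde p g = pf = Q(z)$ for a polynomial $Q$ with $\deg_r Q\leq k_r-1$, and $f = Q/p$ as elements of $L^2(\mathbb{T}^2)$.

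The main obstacle is to conclude $Q\equiv 0$ from $Q/p\in L^2$. I would analyze $|p|^2$ locally near each $z_0=(\alpha,\beta)\in Z$ via the parametrization $z_r=\alpha_r e^{is_r}$; Taylor expansion and the identity $|p|^2 = (\operatorname{Re} p)^2 + (\operatorname{Im} p)^2$ give
\[
|p(z)|^2 \;=\; (k_1 s_1 + k_2 s_2)^2 \;+\; \tfrac{1}{4}\bigl((k_1 s_1)^2+(k_2 s_2)^2\bigr)^2 \;+\; O(|s|^5).
\]
Changing coordinates to $u = k_1 s_1 + k_2 s_2$ and $v = k_1 s_1 - k_2 s_2$ yields $|p|^2 \leq C(u^2 + v^4)$ in a neighborhood of the origin, and the elementary estimate $\int_{-\epsilon}^\epsilon\int_{-\epsilon}^\epsilon \frac{du\,dv}{u^2+v^4}$ diverges (the inner integral is $\sim \pi/v^2$ as $v\to 0$). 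Hence $1/|p|^2$ is not locally integrable at $z_0$, so $Q(z_0)\neq 0$ would force $|Q/p|^2\geq c/|p|^2$ to be non-integrable there, contradicting $f\in L^2$. Therefore $Q$ vanishes on all of $Z$. The space of polynomials of bidegree at most $(k_1-1,k_2-1)$ has dimension $k_1 k_2 = |Z|$, and point-evaluation at $Z$ is given by the tensor product of two discrete Fourier transform matrices, which is invertible. Thus $Q\equiv 0$, so $f=0$, giving $\phi L^2_{--}\cap H^2 = \{0\}$; Theorem \ref{thm2.2} then delivers uniqueness of the Agler decomposition.
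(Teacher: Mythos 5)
Your proof is correct and follows essentially the same strategy as the paper's: reduce via Theorem \ref{thm2.2} to showing $\phi L^2_{--}\cap H^2=\{0\}$, identify that space with $\{Q/p\}$ for polynomials $Q$ of bidegree at most $(k_1-1,k_2-1)$, use local non-integrability of $1/|p|^2$ at the $k_1k_2$ root-of-unity points on $\mathbb{T}^2$ to force $Q$ to vanish there, and finish with a Vandermonde (DFT) invertibility argument. The only difference is your choice of $p=2-z_1^{k_1}-z_2^{k_2}$ in place of the paper's $3-z_1^{k_1}-z_2^{k_2}-z_1^{k_1}z_2^{k_2}$, which has the same zero set on $\overline{\mathbb{D}^2}$ and works equally well.
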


\begin{proof} Let $(k_1,k_2) \in \mathbb{N}^2.$ By Theorem \ref{thm2.2}, an inner function $\phi$ has a unique Agler
decomposition if and only if $H^2 \cap \phi L^2_{--} = \{0\}.$ Let $p$ be an atoral polynomial with $\deg p = (k_1,k_2)$ and with no zeros on $\mathbb{D}^2.$ Then, $\phi = \tfrac{\tilde{p}}{p}$ is
rational inner with $\deg \phi = (k_1,k_2)$.  As $S^{max}_1 \cap S^{max}_2 = H^2 \cap
\phi L^2_{--},$  we can use Lemma \ref{lem4.1} to conclude that 
\begin{align} H^2 \cap \phi L^2_{--} &
\subseteq \Big \{  \tfrac{q}{p} \in H^2: q \in H^2 \text{ and } \hat{q}(n_1,n_2)=0 \text{ if } n_1 \ge k_1 \text{ or } n_2 \ge k_2  \Big \}. \label{eqn4.5.1}
\end{align} 
Let $L$ denote the set on the right-hand-side of $(\ref{eqn4.5.1}).$ We will construct a $\phi$ such that
$L$ is trivial. Let $p$ be  an atoral polynomial with $\deg p = (k_1,k_2)$ and with zeros at the following
$k_1^{th}$ and $k_2^{th}$ roots of unity: 
\begin{align} \label{eqnroot} \Big (e^{ \frac{2\pi i k}{k_1}},
e^{ \frac{2\pi i j}{k_2}} \Big ),  \end{align}
where $1 \le k \le k_1, \ 1 \le j \le k_2.$ In particular, we take $p(z)= 3 - z_1^{k_1}-z_2^{k_2} -z_1^{k_1}z_2^{k_2}$
and will consider $\phi = \frac{ \tilde{p}}{p}.$
Using the power series representation of $p$ centered at each root of unity, one can use basic
estimates to show 
\begin{align*} \tfrac{1}{|p|^2} \text{ is not integrable near each }\Big (e^{ \frac{2\pi
ik}{k_1}}, e^{ \frac{2\pi ij}{k_2}} \Big). \end{align*} 
It follows that if there is a function
$q$ with $\tfrac{q}{p} \in H^2,$ then $q$ vanishes at each root of unity in (\ref{eqnroot}). Observe that, 
if $ \tfrac{q}{p} \in L,$ then $q$ is a polynomial with $\deg_{r} q < k_r,$ for $r=1,2.$  We can write
\begin{equation*} q(z) = \sum_{\substack{ 0 \le m < k_1 \\ 0 \le n <k_2}}
a_{mn} z_1^m z_2^n,  \text{ where }   q \Big ( e^{ \frac{2\pi i k}{k_1}}, e^{
\frac{2\pi ij}{k_2}} \Big )=0,\end{equation*} 
for all $k,j$ with $1 \le k \le k_1$ and $1 \le j \le k_2$. We will show that such a $q$ must satisfy $q \equiv 0.$ For each $k$,
where $1 \le k \le k_1,$ define 
\begin{equation*}q_k(z_2) := q \Big (e^{ \frac{2\pi ik}{k_1}},
z_2 \Big ) = \sum_{0 \le n <k_2} \Big( \sum_{ 0 \le m < k_1} a_{mn} e^{ \frac{2\pi ikm}{k_1}} \Big) z_2^n.\end{equation*} 
As $\deg q_k \le k_2-1$ and $q_k$ has $k_2$ zeros, $q_k \equiv 0.$
That implies 
\begin{align} \label{eqn4.6.1} \sum_{ 0 \le m < k_1} a_{mn} e^{ \frac{2\pi i km}{k_1}} = 0,
\end{align} 
 for all $k$ and $n$ with $1 \le k \le k_1, \ 0 \le n \le k_2-1.$ Fix $n$ with $ 0 \le n \le
k_2-1$. It follows from (\ref{eqn4.6.1}) that we have the following matrix equation: 
\begin{equation*} \left[
\begin{array}{cccc} 1 & e^{ \frac{2\pi i}{k_1}} & \cdots & \Big (e^{ \frac{2\pi i}{k_1}}
\Big)^{k_1-1} \\ 1 & e^{ \frac{4\pi i}{k_1}} & \cdots & \Big(e^{ \frac{4\pi i }{k_1}}\Big)^{k_1-1} 
\\ \vdots & \vdots & & \vdots \\ 1 & e^{ \frac{2 k_1 \pi i}{k_1}} &
\cdots & \Big( e^{ \frac{2 k_1\pi i }{k_1}} \Big)^{k_1-1} \end{array} \right] \cdot \left[
\begin{array}{c} a_{0n} \\ a_{1n} \\ \vdots \\ \\ a_{(k_1-1)n} \end{array} \right] = \left[
\begin{array}{c} 0 \\ 0 \\ \vdots \\ \\ 0 \end{array} \right]. \end{equation*} 
Observe that the matrix is a
Vandermonde matrix. It then has determinant given by 
\begin{equation*}\prod_{1 \le s < t \le k_1} \Big (e^{ \frac{2\pi i
s }{k_1}} - e^{ \frac{2\pi i  t }{k_1}} \Big) \ne 0.\end{equation*} 
As the matrix is nonsingular, each
$a_{mn}=0,$ and so $q\equiv 0.$ Thus, 
\begin{equation*}\phi(z)= \frac{\tilde{p}(z)}{p(z)} =
\frac{3z_1^{k_1}z_2^{k_2} - z_1^{k_1}-z_2^{k_2}-1 }{3 -
z_1^{k_1}-z_2^{k_2} -z_1^{k_1}z_2^{k_2}}\end{equation*} 
has a trivial $L$ set and hence,
a unique Agler decomposition. \end{proof}

\section{Stable Polynomials}  We end with an application of the analysis
in Section 4.

Let $d \ge 2,$ and let $\phi$ be rational inner on $\mathbb{D}^d$ with $\deg \phi = (k_1, \dots, k_d).$ Again, by the analysis of Agler-McCarthy-Stankus in \cite{ams06}, 
$\phi$ has an (almost) unique representation as 
\begin{align} \label{eqn4.7}
\phi(z) = m(z) \frac{\tilde{p}(z)}{p(z)},\end{align} 
for a monomial $m$ and an
atoral polynomial $p$ with no zeros on $\mathbb{D}^d$, such that $\deg_{r} \phi = \deg_{r} m +\deg_{r} p$ for
each $r.$ Moreover, any function of the form (\ref{eqn4.7}) is rational inner. We also define the reproducing kernel Hilbert space 
\begin{equation*} \mathcal{H}_{\phi} := \mathcal{H} \left (
\frac{1-\phi(z)\overline{\phi(w)}}{\prod_{i=1}^d (1-z_i\bar{w}_i)} \right).\end{equation*}

For a fixed $d$, we define the notation $H^2:=H^2(\mathbb{D}^d)$ and 
$L^2:=L^2(\mathbb{T}^d).$ The arguments in Proposition \ref{prop4.1} generalize easily to yield the
following result:

\begin{prop} \label{prop5.1} Let $\phi \in H^{\infty}_1(\mathbb{D}^d)$ be rational inner and continuous on
$\overline{\mathbb{D}^d}$ with $\deg \phi = (k_1, \dots, k_d)$ and representation $(\ref{eqn4.7}).$ Then \begin{equation*} \mathcal{H}_{\phi}= \tfrac{1}{p} \Big[ z^{k_1}_1 \cdots z^{k_d}_d \big [ L^2 \ominus H^2 \big ] \cap
H^2 \Big ]. \end{equation*} \end{prop}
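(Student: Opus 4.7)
The plan is to mimic the proof of Proposition \ref{prop4.1} essentially verbatim, since every step there used only facts about inner functions, the atoral--toral factorization, and the Hardy/$L^2$ setup, all of which make sense on $\mathbb{D}^d$ for any $d \ge 2$. First I would record the $d$-variable analogue of Remark \ref{rem2.1.b}: because $\phi$ is inner, $M_\phi$ is an isometry on both $L^2$ and $H^2$, so
\begin{equation*}
\mathcal{H}_\phi \ = \ H^2 \ominus \phi H^2 \ = \ H^2 \cap \phi \big[L^2 \ominus H^2\big].
\end{equation*}
This is exactly where the hypothesis that $\phi$ is inner (rather than just Schur) enters.

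Next I would use the continuity hypothesis: if $\phi$ is continuous on $\overline{\mathbb{D}^d}$, then $p$ has no zeros on $\overline{\mathbb{D}^d}$, hence $1/p \in H^{\infty}(\mathbb{D}^d)$, and therefore $pH^2 = H^2$. Define
\begin{equation*}
q(z) \ := \ \overline{p\big(\tfrac{1}{\bar{z}}\big)},
\end{equation*}
which agrees with $\bar p$ on $\mathbb{T}^d$ and satisfies $q, 1/q \in L^{\infty}(\mathbb{T}^d)$. Exactly as in Proposition \ref{prop4.1}, for any $f \in H^2$ and $g \in L^2 \ominus H^2$, I would compute $\langle qg, f\rangle_{L^2} = \langle g, pf\rangle_{L^2} = 0$ (and likewise with $q$ replaced by $1/q$), to conclude that $q\big[L^2 \ominus H^2\big] = L^2 \ominus H^2$.

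With these two ingredients in hand, the rest is arithmetic. Writing $m(z) = z_1^{\deg_1 m}\cdots z_d^{\deg_d m}$ and using the identity $m(z)\tilde p(z) = z_1^{k_1}\cdots z_d^{k_d}\, q(z)$ coming from $\deg_r m + \deg_r p = k_r$, I would chain together
\begin{align*}
\mathcal{H}_\phi \ &= \ H^2 \cap \phi\big[L^2 \ominus H^2\big] \\
&= \ \tfrac{1}{p}\Big[m\tilde p\big[L^2 \ominus H^2\big] \cap H^2\Big] \\
&= \ \tfrac{1}{p}\Big[z_1^{k_1}\cdots z_d^{k_d}\, q\big[L^2 \ominus H^2\big] \cap H^2\Big] \\
&= \ \tfrac{1}{p}\Big[z_1^{k_1}\cdots z_d^{k_d}\big[L^2 \ominus H^2\big] \cap H^2\Big],
\end{align*}
where the second equality uses $pH^2 = H^2$ and the last uses the $q$-invariance step.

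The main thing to be careful about is the very first step: one needs that for inner $\phi$ on $\mathbb{D}^d$, the kernel Hilbert space associated to $\big(1-\phi(z)\overline{\phi(w)}\big)/\prod_i(1-z_i\bar w_i)$ really does coincide with $H^2 \ominus \phi H^2$. This is standard but should be cited (it reduces to the fact that $M_\phi$ is a shift and hence $H^2 = \phi H^2 \oplus (H^2 \ominus \phi H^2)$, and the reproducing kernel of the second summand is exactly the displayed kernel). No genuinely new idea beyond the $d = 2$ case is required; the potential obstacle is really just notational bookkeeping in verifying that the $q$-multiplication argument and the factorization $m\tilde p = z_1^{k_1}\cdots z_d^{k_d} q$ remain valid when the number of variables grows.
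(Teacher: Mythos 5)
Your proposal is correct and takes exactly the route the paper intends: the paper's proof of Proposition \ref{prop5.1} is simply the assertion that the argument of Proposition \ref{prop4.1} generalizes verbatim to $d$ variables, and your write-up carries out that generalization, using the same three ingredients (the $d$-variable analogue of Remark \ref{rem2.1.b} for inner $\phi$, the identity $pH^2=H^2$ from $p,1/p\in H^\infty$, and $q\big[L^2\ominus H^2\big]=L^2\ominus H^2$) and the same chain of set equalities via $m\tilde p=z_1^{k_1}\cdots z_d^{k_d}q$.
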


We say a polynomial $p$ in $d$ complex variables is \emph{stable} if $p$ has no zeros on
$\overline{\mathbb{D}^d}$. We can now generalize a result of Knese \cite[Theorem 1.1]{kn08bs} about
stable polynomials in two complex variables and simultaneously, provide a simple proof of the
original result.

\begin{thm} \label{thm4.3} Let $p$ be a non-constant polynomial in $d$ complex variables.
Then $p$ is stable if and only if there is a constant $c>0$ such that for all $z \in \mathbb{D}^d,$ 
\begin{align} \label{eqn5.1.2}
|p(z)|^{d}-|\tilde{p}(z)|^{d} \ge c \prod_{i=1}^d \big(1-|z_i|^2 \big).\end{align} 
\end{thm}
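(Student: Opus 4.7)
My plan is to prove both directions by exploiting the structure of $\mathcal{H}_\phi$ for $\phi=\tilde p/p$ given by Proposition \ref{prop5.1}, together with an asymptotic analysis of $|p|^d-|\tilde p|^d$ near hypothetical boundary zeros of $p$. Suppose first that $p$ is stable. Then $\phi:=\tilde p/p$ is rational inner on $\mathbb{D}^d$ and continuous on $\overline{\mathbb{D}^d}$, so Proposition \ref{prop5.1} applies. Since $p$ is non-constant, some $k_i>0$, so $z^{-k}\in L^2\ominus H^2$, which places $1/p$ inside $\mathcal{H}_\phi$. Because $\phi$ is inner, $\mathcal{H}_\phi = H^2\ominus\phi H^2$ isometrically, hence $\|1/p\|_{\mathcal{H}_\phi}=\|1/p\|_{H^2}<\infty$ (since $|p|$ is bounded below on $\overline{\mathbb{D}^d}$).

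Applying the reproducing kernel inequality $|f(z)|^2\le\|f\|_{\mathcal{H}_\phi}^2\, K_\phi(z,z)$ to $f=1/p$ and clearing denominators gives
\[
|p(z)|^2-|\tilde p(z)|^2 \ge \frac{1}{\|1/p\|_{H^2}^2}\prod_{i=1}^d(1-|z_i|^2).
\]
To bootstrap the exponent from $2$ up to $d$, I will use the elementary inequality
\[
|p|^d-|\tilde p|^d = (|p|-|\tilde p|)\sum_{i=0}^{d-1}|p|^i|\tilde p|^{d-1-i} \ge |p|^{d-2}\bigl(|p|^2-|\tilde p|^2\bigr),
\]
valid for $d\ge 2$ by retaining only the two largest terms of the sum, together with $|p|^{d-2}\ge\delta^{d-2}$ where $\delta:=\min_{\overline{\mathbb{D}^d}}|p|>0$. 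Combining these two inequalities yields the desired lower bound with $c=\delta^{d-2}/\|1/p\|_{H^2}^2$.

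For the converse, suppose the inequality holds with some $c>0$. It immediately rules out zeros of $p$ in the open polydisk: if $p(w)=0$ with $w\in\mathbb{D}^d$, then the left side is non-positive while the right side is strictly positive. Suppose toward contradiction that $p(w)=0$ for some $w\in\mathbb{T}^d$; then $\tilde p(w)=0$ also, since $|\tilde p|=|p|$ on $\mathbb{T}^d$. Approach $w$ radially by setting $z=rw$ with $r\in(0,1)$, so that $\prod_i(1-|rw_i|^2)=(1-r^2)^d$. Taylor expansion gives $p(rw)=-(1-r)\alpha+O((1-r)^2)$ and $\tilde p(rw)=-(1-r)\tilde\alpha+O((1-r)^2)$, where $\alpha:=\sum_i w_i\partial_i p(w)$ and $\tilde\alpha$ is the analogous radial derivative of $\tilde p$. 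The crux is the identity $|\tilde\alpha|=|\alpha|$: writing $p=\sum_\beta c_\beta z^\beta$ so that $\tilde p=\sum_\beta \bar c_{k-\beta}z^\beta$, a direct computation combined with $w^{-\beta}=\bar w^\beta$ on $\mathbb{T}^d$ yields $\tilde\alpha=|k|\tilde p(w)-w^k\overline{\alpha}$, which collapses to $\tilde\alpha=-w^k\overline{\alpha}$ because $\tilde p(w)=0$. Hence the leading $(1-r)^d$ terms in $|p(rw)|^d$ and $|\tilde p(rw)|^d$ cancel, forcing $|p(rw)|^d-|\tilde p(rw)|^d=O((1-r)^{d+1})$, which contradicts the lower bound $c(1-r^2)^d\ge c(1-r)^d$ as $r\to 1^-$. (If $\alpha=0$ the vanishing order only increases and the contradiction persists.) The main obstacle is the cancellation identity $|\tilde\alpha|=|\alpha|$, but it falls out cleanly from the algebraic structure of $\tilde p$.
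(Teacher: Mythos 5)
Your forward direction and your treatment of torus zeros are correct and essentially the paper's argument: membership of $1/p$ in $\mathcal{H}_{\phi}$ via Proposition \ref{prop5.1} plus reproducing-kernel positivity gives the exponent-$2$ inequality, the factorization of $|p|^d-|\tilde p|^d$ bootstraps the exponent (your $|p|^{d-2}(|p|^2-|\tilde p|^2)$ bound is a compressed version of the paper's two-step estimate), and your coefficient identity $\tilde\alpha=-w^k\overline{\alpha}$ is an equivalent route to the same first-order cancellation the paper extracts by substituting $z=r\tau$ directly into $\tilde p(z)=z^k\overline{p(1/\bar z)}$.

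There is, however, a genuine gap in the converse: stability means no zeros on $\overline{\mathbb{D}^d}$, and for $d\ge 2$ the set $\overline{\mathbb{D}^d}\setminus(\mathbb{D}^d\cup\mathbb{T}^d)$ is nonempty (e.g.\ the point $(1,0)$ when $d=2$). Your two cases do not cover a zero $w$ with some coordinates unimodular and others strictly inside the disk. Neither of your arguments applies there: the ``open polydisk'' argument fails because $\prod_i(1-|w_i|^2)=0$ at such a point, so \eqref{eqn5.1.2} gives no information; and the torus argument fails because $|\tilde p|=|p|$ need not hold off $\mathbb{T}^d$, so you cannot conclude $\tilde p(w)=0$ or run the cancellation identity. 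The paper handles exactly this case ($0<n_2<d$ in its notation) by a cruder radial estimate: with $n_2$ unimodular coordinates, the right side of \eqref{eqn5.1.2} along $z=rw$ is bounded below by a constant times $(1-r)^{n_2}$, while $p(rw)=O(1-r)$ forces the left side to be at most $|p(rw)|^d=O\big((1-r)^d\big)$, and $n_2<d$ yields the contradiction. (Alternatively one can show by a Hurwitz/Rouch\'e argument that a polynomial with no zeros on $\mathbb{D}^d\cup\mathbb{T}^d$ has none on $\overline{\mathbb{D}^d}$, but some such step must be supplied.)
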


\begin{proof} ($\Rightarrow$) Assume $p$ is a stable polynomial in $d$ complex variables.
As $p$ has no zeros on $\overline{\mathbb{D}^d}$, 
then $p$ is atoral, and $\phi = \tfrac{\tilde{p}}{p}$ is inner. By Proposition \ref{prop5.1},
 \begin{equation*} \mathcal{H}_{\phi}=
\tfrac{1}{p} \Big[ z^{k_1}_1 \cdots z^{k_d}_d \big [ L^2 \ominus H^2 \big ] \cap H^2
\Big ].\end{equation*} 
It is immediate that $\tfrac{1}{p} \in \mathcal{H}_{\phi}$, and so there is a constant $c_1 >0$ such
that 
\begin{equation} \label{eqn:pker} \frac{1-\phi(z)\overline{\phi(w)}}{\prod_{i=1}^d (1-z_i\bar{w}_i)} -
\frac{c_1}{p(z)\overline{p(w)}}\end{equation} 
is a positive kernel. Setting $w = z$ in $(\ref{eqn:pker})$ gives a nonnegative expression and rearranging terms yields 
\begin{equation*} |p(z)|^{2}-|\tilde{p}(z)|^{2} \ge c_1 \prod_{i=1}^d
\big (1-|z_i|^2 \big).\end{equation*} 
As $p$ has no zeros on $\overline{\mathbb{D}^d}$ and since $p, \tilde{p}$ are clearly bounded on $\overline{\mathbb{D}^d},$ there is a constant $c_2>0$ such that 
\begin{align*} |p(z)| -|\tilde{p}(z)| & \ge  c_1  \frac{1}{|p(z)| + |\tilde{p}(z)|} \prod_{i=1}^d
\big (1-|z_i|^2 \big ) \\
& \ge c_2 \prod_{i=1}^d
\big (1-|z_i|^2 \big). \end{align*}
Again, as $p$ does not vanish on $\overline{\mathbb{D}^d}$, there is a constant $c_3>0$ such that 
\begin{align*} |p(z)|^d -|\tilde{p}(z)|^d & =  \big(  |p(z)| -|\tilde{p}(z)| \big) \bigg( \sum_{j=1}^d |p(z)|^{j-1} |\tilde{p}(z)|^{d-j} \bigg) \\
& \ge c_3 \big(  |p(z)| -|\tilde{p}(z)| \big)  \\
& \ge  c  \prod_{i=1}^d
\big (1-|z_i|^2 \big), \end{align*}
where $c=c_2c_3>0.$ \bigskip

\noindent
($\Leftarrow$) Assume $p$ satisfies equation $(\ref{eqn5.1.2})$. Proceeding towards a contradiction,
assume $p$ (and thus, $\tilde{p}$) has a zero on $\partial \mathbb{D}^d.$ Without loss of generality, we
can assume the zero occurs at a point $(\tau_1, \dots, \tau_d) \in
\mathbb{D}^{n_1} \times \mathbb{T}^{n_2},$ where $n_1 +n_2 = d.$ Assume $n_2 < d$. As $p(r\tau_1,\dots, r\tau_d) = O(1-r)$
and $\tilde{p}(r\tau_1,\dots, r\tau_d) = O(1-r)$, it is immediate that 
\begin{align} |p(r\tau_1,\dots, r\tau_d)|^{d} -
|\tilde{p}(r\tau_1,\dots, r\tau_d)|^d = O(1-r)^d. \label{eqn5.2}\end{align}
 Combining (\ref{eqn5.1.2}) and
(\ref{eqn5.2}) and using the fact that $n_2 <d,$ we obtain a contradiction as $r \nearrow 1.$ \medskip

\noindent
Assume $n_2=d.$ For some constant $a$, we have
$p(r\tau_1, \dots, r\tau_d) = a(1-r) +O(1-r)^2,$ and
 \begin{align*} \tilde{p}(r\tau_1, \dots, r\tau_d) &= r^{k_1 + \dots
+ k_d} \tau_1^{k_1} \cdots \tau_d^{k_d}\bar{ p}( \tfrac{\tau_1}{r}, \dots, \tfrac{\tau_d}{r}) \\  &=
r^{k_1 + \dots+ k_d} \tau_1^{k_1} \cdots \tau_d^{k_d} \big[ \bar{a}(1-\tfrac{1}{r}) +O(1-r)^2 \big]. \end{align*}

Using our equations for $p(r\tau_1, \dots, r\tau_d) $ and $\tilde{p}(r\tau_1, \dots, r\tau_d)$, we have
\begin{align} |p(r\tau_1, \dots & , r\tau_d)|^{d}  - |\tilde{p}(r\tau_1, \dots, r\tau_d)|^{d}  \nonumber \\
& =  \big|a(1-r) +O(1-r)^2 \big |^{d} - r^{d(k_1 + \dots + k_d)} \big | \bar{a}(1-\tfrac{1}{r})
+O(1-r)^2 \big |^{d} \nonumber\\ &=  |a|^{d}(1-r)^{d} \big[ 1 - r^{d(k_1 + \dots + k_d-1)}
\big] +O(1-r)^{d+1}\nonumber \\ &= O(1-r)^{d+1}. \label{eqn5.1.3} \end{align}
Combining (\ref{eqn5.1.2}) and (\ref{eqn5.1.3}), we get a contradiction as $r \nearrow 1.$ \end{proof}

\section*{Acknowledgments}
I would like to thank John McCarthy for his
guidance during this research and Greg Knese, Joseph Ball, and the referee for their many useful insights and 
suggestions.

\appendix \section{}

This section contains results about reproducing kernel Hilbert spaces that have been used in the
paper. For more information about reproducing kernels and their associated Hilbert spaces, 
see \cite{aro50}, \cite{bv03b}, Chapter 2
in \cite{alp01}, and Chapter 2 in \cite{ ampi}.

Let $\Omega$ be a set in $\mathbb{C}^d$. In this paper, we say a function $K: \Omega \times \Omega
\rightarrow \mathbb{C}$ is a positive kernel on $\Omega$ if, for all finite sets $\{\lambda^1, \dots, \lambda^m \} \subseteq \Omega$, the matrix
with entries $K(\lambda^i,\lambda^j)$ is positive semidefinite.

A reproducing kernel Hilbert space $\mathcal{H}$ on $\Omega$ is a Hilbert space of functions on $\Omega$ such that, for
each $w \in \Omega$, point evaluation at $w$ is a continuous linear functional. Thus, there exists an element $K_{w} \in 
\mathcal{H}$ such that for each $f \in \mathcal{H},$ 
\begin{equation*} \langle f, K_{w} \rangle_{\mathcal{H}} = f(w).\end{equation*} 
As $K_{w}(z) = \langle K_{w}, K_{z}\rangle_{\mathcal{H}}$, we can regard $K$ as a function on
$\Omega \times \Omega$ and write $K(z, w): = K_{w}(z).$ Such a $K$ is a positive kernel, and
the space $\mathcal{H}$ with reproducing kernel $K$ is denoted by $\mathcal{H}(K).$ If $\mathcal{H}(K)$ is a space of
holomorphic functions, then $K$ is holomorphic in $z$ and conjugate-holomorphic in $w$. The
following property follows from Parseval's identity:

\begin{thm} \label{thmA.1} \cite[Proposition 2.18]{ampi} Let $\mathcal{H}(K)$ be a reproducing kernel Hilbert
space on $\Omega$ and let $\{f_i\}_{i \in I}$ be an orthonormal basis for $\mathcal{H}(K)$. Then 
\begin{equation*}K(z,
w) = \sum_{i \in I} f_i(z) \overline{ f_i(w)}.\end{equation*} \end{thm}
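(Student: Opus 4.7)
The plan is to prove this directly from the reproducing property combined with Parseval's identity in the Hilbert space $\mathcal{H}(K)$. The key observation is that for each fixed $w \in \Omega$, the function $K_w := K(\cdot, w)$ is itself an element of $\mathcal{H}(K)$, so it can be expanded in the orthonormal basis $\{f_i\}_{i \in I}$.

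First I would fix $w \in \Omega$ and write the abstract Fourier expansion
\begin{equation*}
K_w = \sum_{i \in I} \langle K_w, f_i \rangle_{\mathcal{H}(K)} \, f_i,
\end{equation*}
where the convergence is in the norm of $\mathcal{H}(K)$. Next I would evaluate the coefficients: by conjugate symmetry of the inner product and the reproducing property,
\begin{equation*}
\langle K_w, f_i \rangle_{\mathcal{H}(K)} = \overline{\langle f_i, K_w \rangle_{\mathcal{H}(K)}} = \overline{f_i(w)}.
\end{equation*}
Substituting back yields $K_w = \sum_{i \in I} \overline{f_i(w)} f_i$ in $\mathcal{H}(K)$.

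To conclude, I would pass from convergence in the $\mathcal{H}(K)$-norm to pointwise convergence at a fixed $z \in \Omega$: because point evaluation at $z$ is given by inner product with $K_z$, it is a bounded linear functional on $\mathcal{H}(K)$, so evaluating both sides at $z$ gives
\begin{equation*}
K(z,w) = K_w(z) = \sum_{i \in I} \overline{f_i(w)} f_i(z) = \sum_{i \in I} f_i(z)\,\overline{f_i(w)},
\end{equation*}
which is the desired identity.

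There is no real obstacle here; the proof is a one-line application of Parseval to the specific element $K_w$. The only subtle point worth flagging is the shift from norm convergence to pointwise convergence, which is handled by the continuity of point evaluation that is built into the definition of a reproducing kernel Hilbert space, so no additional hypotheses on $\Omega$ or on the index set $I$ are needed.
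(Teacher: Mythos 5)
Your proof is correct and follows exactly the route the paper indicates: the paper gives no proof but notes that the result "follows from Parseval's identity," and your argument is precisely that — expand $K_w$ in the orthonormal basis, identify the coefficients via the reproducing property, and use continuity of point evaluation to pass to pointwise convergence.
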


The following result relates reproducing kernel Hilbert spaces and positive kernels. It was originally stated for positive kernels that do not vanish on the diagonal. However, the result remains true if we consider positive kernels that vanish at some diagonal points.

\begin{thm} \label{thmA.2} \cite[Theorem 2.23]{ampi} Given a positive kernel $K$ on $\Omega,$ there is a reproducing kernel Hilbert space $\mathcal{H}(K)$ on $\Omega$ with reproducing kernel $K$. \end{thm}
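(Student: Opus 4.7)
The plan is to carry out the classical Moore–Aronszajn construction, adapted so that it works even when $K$ may vanish at some diagonal points. For each $w \in \Omega$ I would set $K_w(z) := K(z,w)$ and let $\mathcal{H}_0$ denote the complex vector space of all finite linear combinations $\sum_i c_i K_{w_i}$. On $\mathcal{H}_0$ I would define a sesquilinear form by
\begin{equation*}
\Bigl\langle \sum_i c_i K_{w_i},\ \sum_j d_j K_{w_j} \Bigr\rangle := \sum_{i,j} c_i \bar{d}_j K(w_j, w_i),
\end{equation*}
and first verify that it is well defined (independent of the representation of an element as a linear combination) by observing that if $f = \sum_i c_i K_{w_i}$ is the zero function on $\Omega$ then every inner product $\langle f, K_w\rangle$ evaluates to $f(w) = 0$. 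Positive semidefiniteness of this form is then immediate from the assumption that the Gram matrices $[K(w_j,w_i)]$ are positive semidefinite.

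Next I would promote this to an honest inner product. The Cauchy–Schwarz inequality holds for any positive semidefinite form, so the set $\mathcal{N} := \{f \in \mathcal{H}_0 : \langle f,f\rangle = 0\}$ is a linear subspace; moreover, because $\langle f, K_w\rangle = f(w)$, every element of $\mathcal{N}$ vanishes identically as a function on $\Omega$, so passing to the quotient $\mathcal{H}_0/\mathcal{N}$ identifies two representatives exactly when they are equal as functions. This is the step that accommodates the case where $K(w,w) = 0$ for some $w$: the corresponding $K_w$ simply represents the zero function. The resulting quotient is a genuine pre-Hilbert space of functions on $\Omega$ on which the reproducing identity $\langle f, K_w\rangle = f(w)$ holds by construction.

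To complete the construction I would take the abstract Hilbert space completion $\mathcal{H}$ of $\mathcal{H}_0/\mathcal{N}$ and then realize each element of $\mathcal{H}$ as a function on $\Omega$ by pointwise limits. The key estimate is the reproducing inequality
\begin{equation*}
|f(w)|^2 = |\langle f, K_w\rangle|^2 \le \|f\|^2 \, K(w,w),
\end{equation*}
valid on $\mathcal{H}_0/\mathcal{N}$, which shows that every Cauchy sequence converges uniformly on sets where $K(w,w)$ is bounded and in particular pointwise on $\Omega$. Defining the value of an abstract Cauchy-class $f \in \mathcal{H}$ at $w$ as the limit of the pointwise values, one checks that this identifies $\mathcal{H}$ with a space of functions in which pointwise evaluation is continuous and the reproducing property $\langle f, K_w\rangle_{\mathcal{H}} = f(w)$ persists; hence $K$ is the reproducing kernel of $\mathcal{H}$.

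The main obstacle, in the generality allowed here, is precisely the well-definedness step at points where $K(w,w) = 0$: one must verify that the pointwise realization is consistent (i.e., that the limit is independent of the chosen Cauchy representative), and that the reproducing identity extends from $\mathcal{H}_0/\mathcal{N}$ to all of $\mathcal{H}$. Both follow from the reproducing inequality above, which forces any norm-null sequence to converge to zero pointwise. Uniqueness of $\mathcal{H}(K)$, although not asserted in the statement, is a byproduct: the span of $\{K_w\}$ must be dense in any reproducing kernel Hilbert space whose kernel is $K$.
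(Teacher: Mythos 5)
Your construction is the standard Moore--Aronszajn argument, which is exactly what the cited source (Theorem 2.23 of Agler--M\raise.45ex\hbox{c}Carthy) uses; the paper itself gives no proof beyond that citation. You also correctly isolate the only delicate point in the generality required here---consistency of the pointwise realization at points where $K(w,w)=0$, handled via the reproducing inequality $|f(w)|^2 \le \|f\|^2 K(w,w)$---which is precisely the extension flagged in the remark preceding the theorem, so the proposal is correct.
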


The following results allow us to construct new reproducing kernel Hilbert spaces from
known spaces.

\begin{thm} \label{thmA.3} \cite[Theorem 5]{bt04} Let $\mathcal{H}(K_1)$ and $\mathcal{H}(K_2)$ be reproducing kernel
Hilbert spaces on $\Omega.$ Then, $K=K_1+K_2$ is the reproducing kernel of $ \mathcal{H}(K_1) + \mathcal{H}(K_2)$ with
norm $\| \cdot \|_{\mathcal{H}(K)}$ defined by 
\begin{equation*} \| f\|_{\mathcal{H}(K)}^2 = \min_{\substack{f = f_1 +f _2 \\ f_1 \in
\mathcal{H}(K_1), \ f_2 \in \mathcal{H}(K_2)}} \|f_1\|_{\mathcal{H}(K_1)}^2 + \|f_2\|_{\mathcal{H}(K_2)}^2.\end{equation*} \end{thm}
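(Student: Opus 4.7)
The plan is to realize $\mathcal{H}(K_1) + \mathcal{H}(K_2)$ as a quotient of the external orthogonal direct sum $H := \mathcal{H}(K_1) \oplus \mathcal{H}(K_2)$ by a closed subspace. This automatically confers a Hilbert space structure, gives the infimum formula for the norm, and (by tracking the reproducing elements through the identification) yields $K = K_1 + K_2$ as the reproducing kernel. Concretely, define the addition map $T : H \to \mathbb{C}^{\Omega}$ by $T(f_1, f_2) = f_1 + f_2$. Its range is $\mathcal{H}(K_1) + \mathcal{H}(K_2)$, and its kernel $N := \ker T$ is closed in $H$ because, for each fixed $w \in \Omega$, the functional $(f_1, f_2) \mapsto f_1(w) + f_2(w)$ is bounded on $H$ (by the reproducing property in each summand), and $N$ is the intersection of the kernels of these functionals over all $w$.

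First I would split $H = N \oplus N^{\perp}$, so that $T$ restricts to a linear bijection $T|_{N^{\perp}} : N^{\perp} \to \mathcal{H}(K_1) + \mathcal{H}(K_2)$, and I would transport the Hilbert space structure of $N^{\perp}$ along this bijection. To identify the resulting norm with the infimum in the statement, fix $f \in \mathcal{H}(K_1) + \mathcal{H}(K_2)$ and let $(f_1^*, f_2^*) \in N^{\perp}$ be its unique preimage. Any other decomposition $f = g_1 + g_2$ with $g_r \in \mathcal{H}(K_r)$ satisfies $(g_1 - f_1^*, g_2 - f_2^*) \in N$, so Pythagoras in $H$ gives
\[
\|g_1\|_{\mathcal{H}(K_1)}^2 + \|g_2\|_{\mathcal{H}(K_2)}^2 = \|f_1^*\|_{\mathcal{H}(K_1)}^2 + \|f_2^*\|_{\mathcal{H}(K_2)}^2 + \|(g_1 - f_1^*, g_2 - f_2^*)\|_H^2.
\]
This is minimized precisely when $(g_1, g_2) = (f_1^*, f_2^*)$, showing that the infimum is attained and equals $\|f\|_{\mathcal{H}(K)}^2$.

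For the reproducing property, fix $w \in \Omega$ and observe that $(K_1(\cdot, w), K_2(\cdot, w)) \in H$ is some representative of $(K_1 + K_2)(\cdot, w) \in \mathcal{H}(K_1) + \mathcal{H}(K_2)$, though it need not lie in $N^{\perp}$. Nevertheless, for any $f$ with minimal representative $(f_1^*, f_2^*) \in N^{\perp}$, since the inner product on the quotient is computed by pairing in $H$ and only the $N^{\perp}$-component of the second argument contributes when the first lies in $N^{\perp}$, I would compute
\[
\langle f, (K_1 + K_2)(\cdot, w) \rangle_{\mathcal{H}(K)} = \langle (f_1^*, f_2^*), (K_1(\cdot, w), K_2(\cdot, w)) \rangle_{H} = f_1^*(w) + f_2^*(w) = f(w),
\]
using the reproducing property in each $\mathcal{H}(K_r)$ at the last step. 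Continuity of point evaluation on $\mathcal{H}(K_1) + \mathcal{H}(K_2)$ follows from the continuity on $H$, so this also legitimizes viewing $\mathcal{H}(K_1) + \mathcal{H}(K_2)$ as a reproducing kernel Hilbert space.

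The only real obstacle is the well-definedness of the quotient inner product, namely the independence of the pairing above from the chosen representative of $(K_1 + K_2)(\cdot, w)$; this reduces cleanly to the orthogonality $N^{\perp} \perp N$ in $H$. Everything else (closedness of $N$, completeness of $N^{\perp}$, attainment of the infimum) is automatic from standard Hilbert space theory once $N$ is shown to be closed.
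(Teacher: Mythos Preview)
Your argument is correct: the quotient construction via $N^{\perp}$ in $H = \mathcal{H}(K_1) \oplus \mathcal{H}(K_2)$ is the standard route, and you have handled the closedness of $N$, the attainment of the minimum, and the reproducing-kernel identity cleanly. Note, however, that the paper does not supply its own proof of this statement; it appears in the appendix as a quoted result from \cite{bt04}, so there is no in-paper argument to compare against.
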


\begin{thm} \label{thmA.4} \cite[Theorem 11]{bt04} Let $M$ be a closed subspace of a reproducing
kernel Hilbert space $\mathcal{H}(K).$ Then $M$ is a reproducing kernel Hilbert space with reproducing kernel
\begin{equation*}K_M(z, w)= P_M \big [ K(\cdot, w) \big ] (z),\end{equation*} 
where $P_M$ denotes the orthogonal
projection onto $M$. \end{thm}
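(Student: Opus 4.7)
The plan is to first verify that $M$ inherits the reproducing kernel Hilbert space structure from $\mathcal{H}(K)$ and then to identify its reproducing kernel directly via the orthogonal projection. Since $M$ is a closed subspace of the Hilbert space $\mathcal{H}(K)$, it is itself a Hilbert space of functions on $\Omega$ under the inherited inner product. To see that $M$ is an RKHS, I would fix $w \in \Omega$ and observe that for every $f \in M \subseteq \mathcal{H}(K)$, point evaluation satisfies $|f(w)| = |\langle f, K(\cdot,w)\rangle_{\mathcal{H}(K)}| \le \|K(\cdot,w)\|_{\mathcal{H}(K)}\,\|f\|_{M}$, so point evaluation at $w$ is a bounded linear functional on $M$. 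Hence $M$ possesses a reproducing kernel, which I will denote $K_M$.

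To identify $K_M$, I would decompose $K(\cdot,w) \in \mathcal{H}(K)$ orthogonally as
\begin{equation*}
K(\cdot, w) = P_M[K(\cdot, w)] + (I - P_M)[K(\cdot, w)],
\end{equation*}
where the first summand lies in $M$ and the second lies in $M^{\perp}$. For any $f \in M$, applying the reproducing property in $\mathcal{H}(K)$ and then the orthogonality of $f$ to $M^{\perp}$ yields
\begin{equation*}
f(w) = \langle f, K(\cdot,w)\rangle_{\mathcal{H}(K)} = \langle f, P_M[K(\cdot,w)]\rangle_{\mathcal{H}(K)} = \langle f, P_M[K(\cdot,w)]\rangle_M,
\end{equation*}
where the last equality uses that the inner product on $M$ is the restriction of the inner product on $\mathcal{H}(K)$. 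Since $P_M[K(\cdot,w)] \in M$ and this holds for all $f \in M$, the uniqueness of the reproducing kernel forces $K_M(z,w) = P_M[K(\cdot,w)](z)$.

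There is no genuine obstacle in this argument; the only thing to be a little careful about is distinguishing the ambient inner product from the restricted one, and ensuring that $P_M[K(\cdot,w)]$ is genuinely an element of $M$ (which is automatic from the definition of orthogonal projection onto a closed subspace). The uniqueness of the reproducing kernel, which I would invoke at the last step, follows from a standard Riesz representation argument: if two elements of $M$ both reproduce point evaluation at $w$, their difference is orthogonal to every $f \in M$, hence is zero.
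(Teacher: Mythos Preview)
Your argument is correct and is the standard proof of this fact. Note, however, that the paper does not actually prove this statement: Theorem~\ref{thmA.4} is stated in the appendix as a cited result from \cite[Theorem 11]{bt04}, with no proof given. So there is no ``paper's own proof'' to compare against; your write-up simply supplies the (routine) verification that the appendix omits.
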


We also require results about multipliers of reproducing kernel Hilbert spaces. The following is a
special case of Theorem 2.3.9 in \cite{alp01}.

\begin{thm} \label{thmA.5} Let $\mathcal{H}(K_1)$ and $\mathcal{H}(K_2)$ be reproducing kernel Hilbert spaces on $\Omega,$ and let
$ \phi: \Omega \rightarrow \mathbb{C}.$ Then $M_{\phi}$ is a bounded operator from $\mathcal{H}(K_1)$ to $\mathcal{H}(K_2)$
with norm less than or equal to $b$ if and only if 
\begin{equation*} K_2(z, w) -
\frac{1}{b^2}\phi(z)\overline{\phi(w)} K_1(z, w) \end{equation*} is a positive kernel.\end{thm}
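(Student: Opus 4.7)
The plan is to prove both directions by identifying the adjoint $M_\phi^*$ on kernel functions and translating the operator norm inequality for $M_\phi$ (equivalently for $M_\phi^*$) into the statement that a certain Gram-type sum is nonnegative, which is precisely positivity of the kernel $K_2 - b^{-2}\phi\overline{\phi}K_1$.

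First I would establish the key identity: if $M_\phi:\mathcal{H}(K_1)\to\mathcal{H}(K_2)$ is bounded, then its adjoint acts on reproducing kernel functions by
\begin{equation*}
M_\phi^* K_2(\cdot,w) \= \overline{\phi(w)}\, K_1(\cdot,w),
\end{equation*}
which follows by applying the reproducing property of $\mathcal{H}(K_2)$ to $M_\phi f$ and then the reproducing property of $\mathcal{H}(K_1)$ to $f$. This single identity drives both implications.

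For the forward direction, assume $\|M_\phi\|\le b$, so $\|M_\phi^*\|\le b$ as well. Then for any finite collection $\{w^i\}\subseteq\Omega$ and scalars $\{c_i\}$, set $g=\sum_i c_i K_2(\cdot,w^i)\in\mathcal{H}(K_2)$ and expand $\|M_\phi^* g\|^2_{\mathcal{H}(K_1)}\le b^2\|g\|^2_{\mathcal{H}(K_2)}$ using the identity above and the reproducing property. The inequality rearranges to
\begin{equation*}
\sum_{i,j} \overline{c_i}\, c_j \left[ K_2(w^j,w^i) - \tfrac{1}{b^2}\phi(w^j)\overline{\phi(w^i)}K_1(w^j,w^i)\right]\ge 0,
\end{equation*}
which is exactly the positive-kernel condition on the stated difference (modulo the standard conjugation/symmetry conventions).

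For the reverse direction, assume the difference is a positive kernel. I would define a linear operator $T$ on the dense subspace $\mathrm{span}\{K_2(\cdot,w):w\in\Omega\}\subseteq\mathcal{H}(K_2)$ by $T K_2(\cdot,w):=\overline{\phi(w)}K_1(\cdot,w)$ and extending linearly. The same Gram-matrix computation as above, read in reverse and using the positivity hypothesis, shows $\|Tg\|^2_{\mathcal{H}(K_1)}\le b^2\|g\|^2_{\mathcal{H}(K_2)}$, so $T$ is well-defined (it vanishes whenever $g$ vanishes) and extends boundedly to $\mathcal{H}(K_2)\to\mathcal{H}(K_1)$ with norm $\le b$. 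Taking adjoints, $T^*$ is bounded from $\mathcal{H}(K_1)\to\mathcal{H}(K_2)$ with norm $\le b$; a short calculation pairing $T^* f$ against $K_2(\cdot,w)$ and using the definition of $T$ shows $(T^* f)(w)=\phi(w) f(w)$, so $T^*=M_\phi$ and $\|M_\phi\|\le b$.

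The main technical care needed is bookkeeping with complex conjugates and the convention that positivity of a kernel $K$ means $\sum\overline{c_i}c_j K(w^i,w^j)\ge 0$; once that is pinned down, both directions amount to the same Gram-matrix identity viewed from two sides. Everything else (well-definedness of $T$ on the span modulo its kernel, extension by density) is standard Hilbert-space routine.
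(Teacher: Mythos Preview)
Your argument is correct and is the standard proof of this classical result. Note, however, that the paper does not supply its own proof of this statement: Theorem~\ref{thmA.5} appears in the appendix as a quoted result, attributed as a special case of Theorem~2.3.9 in \cite{alp01}, so there is no in-paper argument to compare against. The adjoint identity $M_\phi^* K_2(\cdot,w)=\overline{\phi(w)}K_1(\cdot,w)$ together with the Gram-matrix computation on finite linear combinations of kernel functions is exactly the usual route, and your handling of well-definedness of $T$ on the span (via the norm inequality forcing $Tg=0$ whenever $g=0$) and the identification $T^*=M_\phi$ are both sound; the only cosmetic issue is the index/conjugate bookkeeping you already flagged.
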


The following is a corollary of Theorem \ref{thmA.5}:

\begin{cor} \label{corA.1} Let $\mathcal{H}(K_1)$ and $\mathcal{H}(K_2)$ be reproducing kernel Hilbert spaces on
$\Omega.$ Then, the space $\mathcal{H}(K_1)$ is contained in $\mathcal{H}(K_2)$ if and only if there is some constant $b$
such that difference 
\begin{equation*}K_2( z, w) -\frac{1}{b^2} K_1(z, w)\end{equation*} is a positive kernel on
$\Omega$. If $ b \le 1$, the containment is contractive. \end{cor}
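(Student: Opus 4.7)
The plan is to derive this corollary directly from Theorem \ref{thmA.5} by specializing $\phi$ to the constant function $1$. With that choice, $M_\phi$ becomes the set-theoretic inclusion of underlying functions, and the positivity condition of Theorem \ref{thmA.5} reduces to exactly the hypothesis of the corollary, since $\phi(z)\overline{\phi(w)} \equiv 1$.

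First I would address the direction from positivity to containment. If $K_2(z,w) - b^{-2} K_1(z,w)$ is a positive kernel, then Theorem \ref{thmA.5} applied with $\phi \equiv 1$ produces a bounded operator $M_1: \mathcal{H}(K_1) \to \mathcal{H}(K_2)$ with $\|M_1\| \le b$. Since $M_1 f = f$ as a function on $\Omega$, every element of $\mathcal{H}(K_1)$ also lies in $\mathcal{H}(K_2)$, and one has $\|f\|_{\mathcal{H}(K_2)} \le b \|f\|_{\mathcal{H}(K_1)}$ for all $f \in \mathcal{H}(K_1)$. In particular, if $b \le 1$ the containment is contractive, which gives the final assertion of the corollary.

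For the converse, suppose $\mathcal{H}(K_1) \subseteq \mathcal{H}(K_2)$ as sets of functions. I would verify that the inclusion map $\iota: \mathcal{H}(K_1) \to \mathcal{H}(K_2)$ has closed graph. Indeed, if $f_n \to f$ in $\mathcal{H}(K_1)$ and $\iota f_n = f_n \to g$ in $\mathcal{H}(K_2)$, then for each $z \in \Omega$ the continuity of point evaluation in both reproducing kernel Hilbert spaces gives
\begin{equation*}
f(z) = \lim_{n} f_n(z) = g(z),
\end{equation*}
so $f = g$ and the graph is closed. The closed graph theorem then supplies a constant $b$ with $\|\iota\| \le b$. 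Since $\iota$ coincides with $M_1$, another application of Theorem \ref{thmA.5} yields that $K_2(z,w) - b^{-2} K_1(z,w)$ is a positive kernel on $\Omega$.

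The only subtle point is the closed graph argument, which is not difficult but is the one place where the bare set-theoretic containment hypothesis must be converted into a quantitative norm bound; everything else is a formal specialization of Theorem \ref{thmA.5}.
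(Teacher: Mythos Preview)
Your proposal is correct and is exactly the approach the paper intends: the result is stated as an immediate corollary of Theorem~\ref{thmA.5} without proof, and specializing $\phi \equiv 1$ is the natural way to read off both directions. Your closed graph argument for the converse fills in the one detail the paper leaves implicit, and it is the standard way to upgrade set-theoretic containment of reproducing kernel Hilbert spaces to bounded inclusion.
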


\end{document}